\documentclass[12pt]{amsart}

\usepackage{pinlabel}
\usepackage{graphicx, overpic}
\usepackage{xcolor}
\usepackage[below]{placeins}
\usepackage[colorlinks=true, linkcolor=blue, citecolor=blue]{hyperref}
\usepackage[]{algorithm2e}
\usepackage{comment}

\usepackage[T1]{fontenc}

\usepackage{amsmath,amsthm,amscd,amssymb,eucal}%, epsfig,psfrag,subfigure,amssymb,}

\usepackage{enumerate, amsfonts, latexsym, color, url}
\usepackage{epstopdf}

\usepackage{pinlabel}

\makeatletter

\begin{document}

\newtheorem{theorem}{Theorem}[section]
\newtheorem{lemma}[theorem]{Lemma}
\newtheorem{proposition}[theorem]{Proposition}
\newtheorem{corollary}[theorem]{Corollary}
\newtheorem{conjecture}[theorem]{Conjecture}
\newtheorem{question}[theorem]{Question}
\newtheorem{problem}[theorem]{Problem}
\newtheorem*{claim}{Claim}
\newtheorem*{criterion}{Criterion}
\newtheorem*{equivalence_theorem}{Equivalence Theorem}
\newtheorem*{conditional_equivalence_theorem}{Conditional Equivalence Theorem}
\newtheorem*{structure_theorem}{Structure Theorem}

\theoremstyle{definition}
\newtheorem{definition}[theorem]{Definition}
\newtheorem{construction}[theorem]{Construction}
\newtheorem{notation}[theorem]{Notation}
\newtheorem{object}[theorem]{Object}
\newtheorem{operation}[theorem]{Operation}

\theoremstyle{remark}
\newtheorem{remark}[theorem]{Remark}
\newtheorem{example}[theorem]{Example}

\numberwithin{equation}{subsection}

\newcommand\id{\textnormal{id}}
\newcommand\CW{CaTherine wheel}
\newcommand\HH{\mathbb H}
\newcommand\N{\mathbb N}
\newcommand\Z{\mathbb Z}
\newcommand\Q{\mathbb Q}
\newcommand\R{\mathbb R}
\newcommand\C{\mathbb C}
\newcommand\DD{\mathbb D}
\newcommand\EE{\mathcal E}
\newcommand\SLE{\textnormal{SLE}}
\newcommand\area{\textnormal{area}}
\newcommand\length{\textnormal{length}}
\newcommand\inte{\textnormal{int}}
\newcommand\un{\textnormal{univ}}

\newcommand{\danny}[1]{{\color{olive}{#1}}}

%Ewain's macros

\newcommand{\ewain}[1]{{\color{blue}{#1}}} 
\newcommand{\td}[1]{\textbf{\color{red}{[#1]}}}

\newcommand{\arXiv}[1]{\href{http://arxiv.org/abs/#1}{arXiv:#1}}
\newcommand{\arxiv}[1]{\href{http://arxiv.org/abs/#1}{#1}}

\def\alb#1\ale{\begin{align*}#1\end{align*}}
\def\allb#1\alle{\begin{align}#1\end{align}}
\newcommand{\eqb}{\begin{equation}}
\newcommand{\eqe}{\end{equation}}
\newcommand{\eqbn}{\begin{equation*}}
\newcommand{\eqen}{\end{equation*}}

\newcommand{\BB}{\mathbb}
\newcommand{\ol}{\overline}
\newcommand{\ul}{\underline}
\newcommand{\op}{\operatorname}
\newcommand{\la}{\langle}
\newcommand{\ra}{\rangle}
\newcommand{\bd}{\mathbf}
\newcommand{\im}{\operatorname{Im}}
\newcommand{\re}{\operatorname{Re}}
\newcommand{\frk}{\mathfrak}
\newcommand{\eqD}{\overset{d}{=}}
\newcommand{\ep}{\varepsilon}
\newcommand{\rta}{\rightarrow}
\newcommand{\xrta}{\xrightarrow}
\newcommand{\Rta}{\Rightarrow}
\newcommand{\hookrta}{\hookrightarrow}
\newcommand{\wt}{\widetilde}
\newcommand{\wh}{\widehat}
\newcommand{\mcl}{\mathcal}
\newcommand{\pre}{{\operatorname{pre}}}
\newcommand{\lrta}{\leftrightarrow}
\newcommand{\bdy}{\partial}
\newcommand{\rng}{\mathring}
\newcommand{\srta}{\shortrightarrow}
\newcommand{\el}{l} 
\newcommand{\eps}{\varepsilon}
\newcommand{\settminus}{-}

%Remove spacing before \left and \right
\let\originalleft\left
\let\originalright\right
\renewcommand{\left}{\mathopen{}\mathclose\bgroup\originalleft}
\renewcommand{\right}{\aftergroup\egroup\originalright}

%Remove spacing before \sim
\let\originalsim\sim
\renewcommand{\sim}{{\originalsim}}

%Redefine \em 
\DeclareRobustCommand{\em}{\bfseries} 
\renewcommand{\emph}[1]{\textbf{#1}}

\title{CaTherine wheels from trees and Liouville quantum gravity}
\author{Danny Calegari}
\address{University of Chicago \\ Chicago, Ill 60637 USA}
\email{dannyc@uchicago.edu}
\author{Ewain Gwynne}
\address{University of Chicago \\ Chicago, Ill 60637 USA}
\email{ewain@uchicago.edu}
\date{\today}

\begin{abstract}
A CaTherine wheel is a space-filling curve $f : S^1\to S^2$ such that for every closed
interval $J\subset S^1$, $f(J)$ is homeomorphic to a closed disk and $f(\partial J)$ is contained in $\partial f(J)$.
A CaTherine wheel gives rise to a pair of disjoint, dense topological trees in $S^2$ which roughly speaking lie to the left and right of $f$.
We give necessary and sufficient conditions for a topological tree in $S^2$ to arise as one of these trees for some CaTherine wheel $f$.
We apply this result to show that there is a unique CaTherine wheel corresponding to the geodesic tree rooted at $\infty$ for the $\gamma$-Liouville quantum gravity (LQG) metric, for $\gamma \in (0,2)$. 
In other words, we construct the space-filling curve which is the contour exploration of the LQG geodesic tree.  
\end{abstract}

\maketitle
\setcounter{tocdepth}{2}
\tableofcontents

\bigskip
\noindent\textbf{Acknowledgments.} We thank Manan Bhatia and Ino Loukidou for helpful discussions, and Manan Bhatia for helpful comments on an earlier version of this paper. 
D.C.\ was partially supported by NSF grant DMS-1928930 while in residence at SLMath
during winter 2026. E.G.\ was partially supported by NSF grant DMS-2245832.

\section{Introduction}

\subsection{CaTherine wheels}
\label{sec-wheel}

A {\em CaTherine wheel} is a surjective map $f:S^1 \to S^2$ such that for every closed
interval $J\subset S^1$ the image $f(J)$ is homeomorphic to a closed (2-dimensional)
disk, and $f(\partial J)$ is contained in $\partial f(J)$.
In other words, a CaTherine wheel is a space-filling loop in $S^2$, 
viewed modulo changes of time parametrization, which maps each interval to a 
closed topological disk and never enters the interior of its past. 
The name derives from Cannon--Thurston, who famously constructed examples
in the theory of hyperbolic 3-manifolds. In fact CaTherine wheels arise in many
contexts throughout low-dimensional geometry and dynamics:
in hyperbolic 3-manifold theory
as Cannon--Thurston maps for 3-manifolds fibering over the circle
\cite{Cannon_Thurston} or admitting pseudo-Anosov flows without perfect fits
\cite{Fenley}; in holomorphic dynamics as Peano curves associated 
to certain expanding Thurston maps of the 2-sphere \cite{Meyer}
and more generally to certain so-called expanding origamis \cite{Calegari_Herr};
and so on. Random CaTherine wheels arise frequently in probability theory, 
as we discuss in Section~\ref{sec-wheel-prob}. A unified theory
of CaTherine wheels is developed in \cite{Catherine_wheels}.

A CaTherine wheel gives rise to a canonical structure called a {\em zipper}, 
which consists of a pair $Z^\pm \subset S^2$
of disjoint, dense, topological $\R$-trees. Every CaTherine wheel arises 
in an essentially unique way as a limit of a family $f_t:S^1 \to S^2$ 
of embedded Jordan curves in $S^2$ (one calls such a family a
{\em pseudo-isotopy}), and one can think of $Z^\pm$ as the
residue of the family of pairs of Jordan domains separated by $f_t(S^1)$ as $t \to 1$.

It turns out (\cite{Catherine_wheels}, Theorem~3.14) that a 
CaTherine wheel can be recovered from its zipper $Z^\pm$. But what if one forgets
$Z^-$ (say) and retains only $Z^+$? The first main theorem of this paper
(Theorem~\ref{theorem:half_zipper}) gives necessary and sufficient conditions for a
subset $Z \subset S^2$ to arise as $Z^+$ for some CaTherine wheel $f:S^1 \to S^2$,
and shows that such an $f$, if it exists, is unique. Once we have proven this theorem, 
we will apply it to establish the existence of a CaTherine wheel whose half-zipper 
$Z^+$ is the geodesic tree for the $\gamma$-Liouville quantum gravity metric for 
$\gamma \in (0,2)$ (see Theorem~\ref{thm-lqg-wheel}).

\subsection{CaTherine wheels from half-zippers}
\label{sec-tree-wheel}
 
A half-zipper $Z$ is a path-connected subset of $S^2$ satisfying
certain properties. It is the path-connectivity that is important, and therefore
throughout this paper we make the following convention: unless we explicitly say to
the contrary, when we refer to ``components'' we mean {\em path components}, and
when we refer to a {\em cut point} in a path-connected set $X$ we mean a point $p$ in $X$ 
for which $X-p$ has at least two path components. Furthermore, when we discuss
paths we usually mean embedded and unparameterized (though sometimes oriented) paths,
again unless we explicitly say to the contrary.

\begin{definition}[half-zipper]\label{definition:half_zipper}
A subset $Z\subset S^2$ is a {\em half-zipper} if it satisfies the following properties:
\begin{enumerate}
\item{$Z$ is dense in $S^2$;}
\item{there is a unique embedded simple path in $Z$ (modulo time parametrization)
connecting any two distinct points of $Z$, and every point of $Z$ is a cut point of $Z$.}
\end{enumerate}
The cut point condition means that for every $p\in Z$ 
there are $q,r\in Z$ so that the unique path in $Z$ from $q$ to $r$ contains $p$ in its interior.

A half-zipper has {\em short hair} if, for any (equivalently, every) metric $d_0$ on $S^2$ which induces the standard topology, it satisfies:
\vskip 2pt
{\hskip 5pt (3) for every $\epsilon>0$ there is a compact subset
$T$ of $Z$ homeomorphic to a finite tree so that every (path) component of 
$Z-T$ has $d_0$-diameter $<\epsilon$.} 
\end{definition}

\begin{remark}[Topological $\R$-tree]\label{remark:topological_R_tree}
A remark about terminology. A half-zipper $Z$ with short hair may 
be expressed as an increasing union of a
countable sequence of finite trees $T_n$ (Lemma~\ref{lemma:increasing_union}). Here
by `finite tree' we mean the underlying topological space of a finite tree in the
sense of graph theory, i.e.\/ a finite connected graph without cycles; trees in this sense
are sometimes called simplicial trees or polygonal trees when there is ambiguity about the term.
In the path topology (i.e.\/ the topology on $Z$ in which a set is open if its
intersection with every $T_n$ is open in $T_n$) the space $Z$ is what is often
called a {\em topological $\R$-tree}, or just {\em topological tree} to distinguish it from
the more familiar simplicial trees like $T_n$. With its path topology, the inclusion
map $Z \to S^2$ is continuous but not a homeomorphism onto its image. This fact is
not used in the paper. 
\end{remark}

We are now ready to state our first main theorem. 

\begin{theorem}[Wheels from short hair]\label{theorem:half_zipper}
Let $Z$ be a half-zipper with short hair (Definition~\ref{definition:half_zipper}). 
Then there is a unique CaTherine wheel $f:S^1 \to S^2$ with zippers $Z^\pm$ for which $Z=Z^+$.
\end{theorem}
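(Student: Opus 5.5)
The plan is to build $f$ as the contour (depth-first) exploration of $Z$, first for finite subtrees and then in the limit. By Lemma~\ref{lemma:increasing_union} I will write $Z=\bigcup_n T_n$ as an increasing union of finite trees, chosen so that every component of $Z-T_n$ has $d_0$-diameter $<\epsilon_n\to 0$. Each $T_n$ is a finite planar tree embedded in $S^2$, so it has a well-defined cyclic contour: a loop $\gamma_n:S^1\to S^2$ that goes once around a thin regular neighborhood of $T_n$, visiting each side of each edge once. The complement $S^2-T_n$ is an open disk $D_n$ whose prime-end boundary circle $\partial D_n$ is exactly this contour. I will parametrize $\partial D_n$ compatibly in $n$: a point of $\partial D_{n}$ which is not a vertex of the finer tree $T_{n+1}$ lies on a side of an edge of $T_n$, and the subtrees of $T_{n+1}-T_n$ attached on that side insert new arcs of contour. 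This gives inverse-system maps $\partial D_{n+1}\to\partial D_n$ (monotone circle maps collapsing the inserted arcs), and the limit is a circle $S^1$. Density of $Z$ guarantees that no arc survives uncollapsed forever in an inconsistent way, so the limit really is a circle and not a more degenerate continuum.

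Next I will define $f$ on this limit circle. A point $\theta\in S^1$ determines, for each $n$, a prime end or side-point $\theta_n\in\partial D_n$. If $\theta_n$ eventually lies on a side of an edge of some $T_n$ and stays fixed, set $f(\theta)$ equal to the corresponding point of $Z$. Otherwise $\theta_n$ lies in the closure of a complementary region bounded by a piece of contour adjacent to a component of $Z-T_n$, and these regions nest. Short hair, together with density, forces the diameters of these regions to tend to $0$: the region is a "gap" between consecutive hairs attached at one vertex, and its frontier consists of two hair components, each of diameter $<\epsilon_n$, together with a point. The intersection is therefore a single point, and I set $f(\theta)$ equal to it. Continuity of $f$ follows from the same diameter bound, since an arc of $S^1$ at level $n$ maps into a set of diameter $O(\epsilon_n)$ plus finitely many edges. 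Surjectivity follows from density of $Z$ and compactness.

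The main obstacle is the CaTherine wheel property: for every closed interval $J$, $f(J)$ should be a closed disk with $f(\partial J)\subset\partial f(J)$. I would first verify it for intervals $J$ whose endpoints map to points of $Z$. In that case $f(J)$ is the closure of the region swept out by the contour between two sides of $Z$, and its boundary is the union of the $Z$-path between $f(\partial J)$ and the complementary "right" boundary, which is the limit of the contour arcs of $\partial D_n$. To show this is a disk I would apply Schoenflies, or Moore's theorem, to the nested approximations: the region cut off by the finite-tree path and a contour arc is a Jordan domain, and short hair gives uniform control of the approximations. The general case follows by approximating arbitrary endpoints by such points. As a by-product, the other tree $Z^-$ appears as the set of points where the left and right contours meet, and $Z^+=Z$ by construction. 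I expect the hardest part to be showing that this limiting boundary is a Jordan curve, so that there are no pinches; I would try to derive this from the uniqueness of paths in $Z$ together with the cut point condition.

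Uniqueness then follows from \cite{Catherine_wheels}. Any CaTherine wheel $g$ with $Z^+=Z$ must traverse each finite tree $T_n$ in contour order, since its past never enters its own interior. Hence $g$ and $f$ induce the same cyclic order on the dense set of preimages of points of $Z$. It follows that they agree up to reparametrization.
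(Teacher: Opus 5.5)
Your first two steps are a workable alternative to the paper's construction. The paper builds the circle as the order completion of the circularly ordered ends of $Z$, filled in by ideal gaps, and remarks that one could instead pass to a limit of prime ends of $S^2-T_n$. Your inverse limit of contour circles does exactly that. It is a circle simply because monotone circle surjections are near-homeomorphisms; density plays no role there. If you define $f(\theta)$ as the limit of the points of $T_m$ under the thread, controlled by the diameter of the component of $Z-T_n$ into which the thread descends, you get continuity and surjectivity as you say.

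The genuine gap is the disk property, which is the actual content of the theorem and which you leave as a hope. Your first two steps use only the finite trees $T_n$ and the smallness of the hair. They therefore go through essentially verbatim for the tree-like sets $\phi(\widetilde Z)$ of Remark~\ref{remark-weaker}, where a ray of the tree lands back on the tree, and there $f(J)$ has local cut points. So ``uniform control from short hair'' cannot be what excludes pinching: Hausdorff limits of the Jordan domains you describe need not be disks. For the same reason, the reduction to endpoints in $Z$ by approximation fails, since nested intersections or increasing unions of closed disks need not be disks. Your description of $\partial f(J)$ is also off. The contour arcs of $\partial D_n$ project into $T_n$ and fill out $\overline{Z}_J$ rather than converging to a boundary arc. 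The second boundary arc of $f(J)$ lies in $Z^-$, which you do not have at this stage.

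The missing idea is the bridge between accumulation in $S^2$ and paths in $Z$, namely Lemma~\ref{lemma:limit_rays}. It says that if $p_i\in Z$ converge to $p$, then the paths in $Z$ from a base point to $p_i$ subconverge (using short hair) to a ray landing at $p$, or to the path to $p$. Uniqueness of paths is a statement about $Z$ alone. Only through this lemma does it constrain how the convex hull $Z_J$ sits in $S^2$: any accumulation of $Z_J$ on itself or on $Z-Z_J$ then yields a second path or an embedded loop in $Z$.

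The paper uses this to show:
\begin{enumerate}
\item $f(J)=\overline{Z}_J$ with $\overline{Z}_J\cap Z=Z_J$ (Lemma~\ref{lemma:closure});
\item $\eta_J\subset\partial\overline{Z}_J$, and $S^2-\overline{Z}_J$ is nonempty and connected (Lemmas~\ref{lemma:boundary_points}, \ref{lemma:gamma_in_boundary}, \ref{lemma:one_complement});
\item $\overline{Z}_J$ has no local cut points in $Z_J$ (Lemma~\ref{lemma:no_local_cut_points}).
\end{enumerate}
A Peano continuum in $S^2$ with connected complement and no cut points is then a closed disk. You need some version of these steps.

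Finally, your uniqueness argument is only heuristic, because points of $Z$ have several preimages, so ``the same cyclic order on preimages'' needs care. The precise input is \cite{Catherine_wheels}, Lemma~1.22: every CaTherine wheel satisfies $f(J)=\overline{Z^+_J}$. This is what the paper cites.
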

 
The CaTherine wheel $f$ of Theorem~\ref{theorem:half_zipper} can be thought of as the clockwise contour 
(depth-first) exploration of the tree $Z$. We also prove a converse to 
Theorem~\ref{theorem:half_zipper}. 

\begin{proposition}[CaTherine wheels short hair]\label{proposition:wheels_to_short_hair}
Let $f:S^1 \to S^2$ be a CaTherine wheel with associated zipper $Z^\pm \subset S^2$. Then
each of $Z^\pm$ has short hair.
\end{proposition}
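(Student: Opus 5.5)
We need to show that $Z^+$ (and symmetrically $Z^-$) has short hair, using only the structure of a CaTherine wheel and its zipper. The approach is to cut $S^1$ into finitely many short intervals, use the corresponding disks $f(J)$ to localize $Z^+$, and take as the finite tree $T$ the union of the $Z^+$-paths joining finitely many chosen points.

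The first ingredient is uniform continuity. Fix $\epsilon>0$. Since $f$ is uniformly continuous, there is a partition of $S^1$ into closed intervals $J_1,\dots,J_n$ (cyclically ordered, sharing endpoints $t_1,\dots,t_n$) such that each disk $D_i=f(J_i)$ has $d_0$-diameter $<\epsilon/2$.

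The second ingredient is how $Z^+$ meets the disks. Recall from the construction of the zipper (via the pseudo-isotopy, \cite{Catherine_wheels}) that $Z^+$ is the set of points lying to the left of $f$. For a closed interval $J$, the boundary $\partial f(J)$ splits at $f(\partial J)$ into a left arc and a right arc, and the left arc is contained in $Z^+\cup f(\partial J)$. We also need that $Z^+\cap \operatorname{int} f(J)$ is a union of pieces of $Z^+$ that attach to the rest of $Z^+$ only through points on the left boundary arc of $f(J)$. I would take the standard fact that a path in $Z^+$ entering $f(J)$ must cross $\partial f(J)$ at a point of the left arc, and I would deduce it from the fact that $Z^+$ is disjoint from $Z^-$ and that the right arc lies in $Z^-$.

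Next we build the tree. For each $i$, choose finitely many points on the left arc of $\partial D_i$. We choose them so that consecutive points cut the arc into subarcs, and each subarc is a path in $Z^+$ (the left arc is an arc of the tree $Z^+$, up to its endpoints). Let $T$ be the union of the unique $Z^+$-paths joining all of these points to a fixed base point. This is a finite union of arcs in a uniquely arcwise connected set, hence a compact finite tree. The key point is to include enough of the left arcs that every boundary subarc of every $D_i$ lying in $Z^+$ is contained in $T$. If an endpoint $f(t_i)$ lies in $Z^-$, we use points approaching it along the arc. Because $D_i$ has small diameter, the left arc of $D_i$ lies within an $\epsilon/2$-neighborhood, so this remains controlled.

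Finally we bound the components of $Z^+-T$. Let $C$ be a path component of $Z^+-T$. Since $S^2=\bigcup D_i$, the component $C$ meets some $D_i$. Any path in $C$ leaving $\operatorname{int} D_i$ must cross $\partial D_i$, and it would do so in the left arc, which is contained in $T$ except possibly at the endpoints $f(t_i)$. This is impossible, so $C\subset D_i$ up to possibly the points $f(t_i)$, and therefore $\operatorname{diam} C<\epsilon$.

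The main obstacle is the claim that the entire left boundary arc of $f(J)$ can be captured inside a finite tree. That arc might not be a single compact $Z^+$-path: its endpoints $f(t_i)$ may be points of $Z^-$ or of neither tree. To handle this, I would first choose the partition points $t_i$ so that $f(t_i)\in Z^+$; this is possible since $Z^+$ is dense and one can perturb the $t_i$ to preimages of points of $Z^+$ on the boundary. With that choice, each left arc is a compact arc in $Z^+$ with endpoints in $Z^+$, so $T$ is the union of finitely many $Z^+$-arcs and the crossing argument above applies cleanly. The case of $Z^-$ follows by reversing the orientation of $S^1$.
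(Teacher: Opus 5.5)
Your argument is correct, and it takes a genuinely different route from the paper's. The paper passes to the canonical extension $F:S^2_f\to S^2$ of \cite{Catherine_wheels}, where point preimages in $P^+$ are hyperbolically convex. It takes the tree to be $F(X^+_\delta)$, the image of the union of the ``large'' decomposition elements, and gets small complementary components from uniform continuity of $F$ on all of $S^2_f$.

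You stay on the circle instead. You choose partition points $t_i\in f^{-1}(Z^+)$; this set is dense in $S^1$, since each $f(I)$ has nonempty interior and $Z^+$ is dense. You let $T$ be the $Z^+$-hull of $f(t_1),\dots,f(t_n)$, and you trap each component of $Z^+-T$ inside $\operatorname{int} f(J_i)$ for a single $i$. In effect your $T$ is the $F$-image of the ideal polygon with vertices $t_i$, so both proofs carry out the same ``compact core'' idea from opposite sides. Your version makes the finite-tree property and the diameter bound immediate, whereas in the paper one still has to see why $F(X^+_\delta)$ is a finite tree.

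The price is that your proof rests entirely on one input: if $f(\partial J)\subset Z^+$, then $\partial f(J)\cap Z^+$ is contained in the compact $Z^+$-path from $f(J^-)$ to $f(J^+)$. State it in exactly this form and justify it, rather than appealing to ``points to the left of $f$''. Note that the paper's Lemma~\ref{lemma:boundary_points} is not available here, since it assumes short hair.

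One justification runs as follows. The set $f(\overline{S^1-J})$ is closed and contains $S^2-f(J)$, so it contains $\partial f(J)$. Hence a point $q\in Z^+\cap\partial f(J)$ other than $f(J^\pm)$ has preimages in both components of $S^1-\partial J$, and these preimages are ideal gaps supported at $q$. They separate $J^-$ from $J^+$ in the circular order on components of $Z^+-q$, so $q$ lies on the path from $f(J^-)$ to $f(J^+)$. Alternatively, cite Lemma~1.22 and the structure of $F$ in \cite{Catherine_wheels}.

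Finally, discard your intermediate idea of approaching an endpoint $f(t_i)\notin Z^+$ along the arc. A ray cannot lie in a compact tree, and it is exactly your later choice $f(t_i)\in Z^+$ that makes the argument go through.
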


Theorem~\ref{theorem:half_zipper} and Proposition~\ref{proposition:wheels_to_short_hair} 
are proven in Section~\ref{section:half_zippers} via a short topological argument based 
on results from \cite{Catherine_wheels}.

\begin{remark}[Weaker notions of half-zipper] \label{remark-weaker}
The reader should take care not to confuse our notion of a half-zipper with 
the following superficially similar notion. Let $\wt Z$ be a topological space with 
the property that there is a unique simple path between any two points of $\wt Z$ and 
every point of $\wt Z$ is a cut point; and let $\phi : \wt Z \to S^2$ is a continuous 
injection for which the image of $\wt Z$ is dense. Then the set $\phi(\wt Z)$ 
is \textbf{not} in general a half-zipper. The reason is that $\phi(\wt Z)$ could 
contain a non-trivial loop: indeed, there could be a path $\sigma : [0,1) \to \wt Z$ 
such that $\phi\circ\sigma$ extends continuously 
to $[0,1]$ with $\phi(\sigma(1)) = \phi(\sigma(0))$. That is, $\phi\circ\sigma$ 
``lands'' at a point of $\phi(\wt Z)$ in the sense of Definition~\ref{definition:landing}. 
The fact that sets $\phi(\wt Z)$ of the above type are not half-zippers is important
to ensure that the CaTherine wheel $f$ arising from a half-zipper as in 
Theorem~\ref{theorem:half_zipper} maps closed intervals to sets which are homeomorphic 
to closed disks (instead of to sets which have local cut points). See in particular 
Lemma~\ref{lemma:no_local_cut_points}.
Some discussion about possible generalizations of the definitions of CaTherine wheels and zippers, where the wheel need not map closed intervals to closed disks, can be found in~\cite[Section 5]{Catherine_wheels}. But, the theory of such generalizations is much less developed than the theory of CaTherine wheels. 
\end{remark}

\subsection{A CaTherine wheel from the Liouville quantum gravity geodesic tree}
\label{sec-lqg-wheel}

In this section, we explain an application of Theorem~\ref{theorem:half_zipper} where the half-zipper $Z$ is taken to be the tree of Liouville quantum gravity geodesics rooted at $\infty$. We first need to explain what this means. We give the minimal amount of background necessary to understand the paper here, and provide references for more details.

The whole-plane {\em Gaussian free field (GFF)} $\Phi$ is in some sense the most canonical notion of a random generalized function (distribution) on $\BB C$. We will not need the precise definition of the Gaussian free field in this paper; we refer to~\cite{shef-gff,bp-lqg-notes,pw-gff-notes} for background on the GFF. 

For a parameter $\gamma \in (0,2)$, {\em Liouville quantum gravity (LQG)} is, heuristically speaking, the random geometry on $\BB C$ described by the Riemannian metric tensor
\eqb \label{eqn-lqg-tensor} 
e^{\gamma \Phi} (dx^2 + dy^2) 
\eqe
where $dx^2 + dy^2$ is the Euclidean metric tensor on $\BB C$. The definition~\ref{eqn-lqg-tensor} does not make literal sense since $\Phi$ does not have well-defined pointwise values. But, one can make sense of various objects associated with~\eqref{eqn-lqg-tensor} via regularization procedures. To do this, one replaces $\Phi$ in~\eqref{eqn-lqg-tensor} by a family of continuous functions $\{\Phi_\ep\}_{\ep > 0}$ which converge to $\Phi$ in the distributional sense as $\ep\to 0$, then takes an appropriate limit of objects associated with $\Phi_\ep$. We emphasize that in general, the arguments to show that the limit exists are non-trivial and specific to the Gaussian free field. 

Particularly relevant for us is the ``Riemannian distance function'' $D  = D_\Phi^{(\gamma)}$ associated with~\eqref{eqn-lqg-tensor}, called the {\em $\gamma$-LQG metric}, which is a random metric on $\BB C$ shown to exist in~\cite{dddf-lfpp,gm-uniqueness}. The metric $D$ induces the Euclidean topology on $\BB C$, but it has very different geometric properties. For example, the Hausdorff dimension of $\BB C$ with respect to $D$ is strictly larger than two~\cite{gp-kpz,dg-lqg-dim}. The LQG metric is uniquely characterized (as a function of $\Phi$) by a list of natural axioms in~\cite{gm-uniqueness}, but we will not need these axioms here. See~\cite{ddg-metric-survey} for a survey of results about the LQG metric. 

One can similarly define the ``volume form'' $\mu$, called the {\em $\gamma$-LQG area measure}, which is a random measure on $\BB C$ obtained as a limit of regularized versions of $e^{\gamma \Phi} \,dx\,dy$, where $dx\,dy$ denotes Lebesgue measure~\cite{kahane,rhodes-vargas-log-kpz,shef-kpz}. See~\cite{bp-lqg-notes} for an exposition of this measure. 

We will be interested in geodesics with respect to $D$. To talk about this, we need to review some concepts from metric geometry. For a path $\sigma : [a,b] \to\BB C$, its {\em $D$-length} is the number
\eqb \label{eqn-length}
  \sup_{T} \sum_{i=1}^{\# T} D(\sigma(t_i) , \sigma(t_{i-1})) 
\eqe 
where the supremum is over all partitions $T : a= t_0 < \dots < t_{\# T} = b$ of $[a,b]$. We say that $\sigma$ is a {\em $D$-geodesic} if for all $a \leq t < s \leq b$, the $D$-length of $\sigma|_{[t,s]}$ is equal to $D(\sigma(t), \sigma(s))$. Almost surely,  for each $z,w\in \BB C$, there exists at least one $D$-geodesic from $z$ to $w$. For a fixed $z,w\in\BB C$, this $D$-geodesic is a.s.\ unique (see, e.g.,~\cite[Lemma 4.2]{ddg-metric-survey}). Note that the geodesics considered in this paper are minimizing geodesics, not just local geodesics. In particular, geodesics are necessarily simple paths.

One of the most important features of the LQG metric is {\em confluence of geodesics}, which roughly speaking says the following (see Propositions~\ref{prop-geo-infty} and~\ref{prop-finite-cross} for some precise statements). For a typical point $z\in\BB C$, any two $D$-geodesics started from $z$ stay together for a non-trivial initial time interval before ``branching off'' toward their respective target points. Consequently, the $D$-geodesics started from $z$ form a tree-like structure. Confluence of geodesics for the LQG metric was first proven in~\cite{gm-confluence}. See Figure~\ref{fig-ball-sim}, left and middle, for simulations of LQG metric balls and geodesics.

\begin{figure}[t]
\begin{center}
\includegraphics[width=0.6\textwidth]{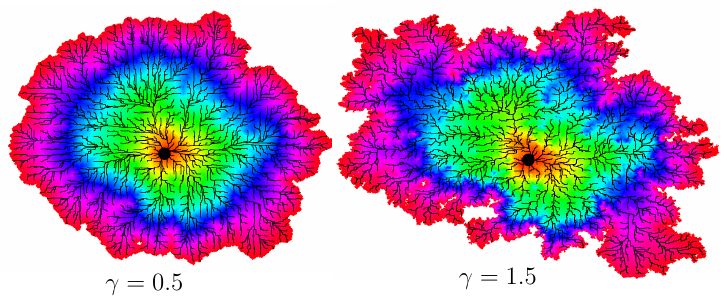}   
\includegraphics[width=0.3\textwidth]{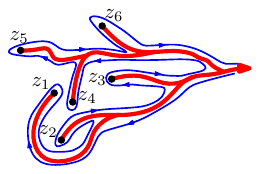}   
\caption{\label{fig-ball-sim} 
\textbf{Left and middle:} Simulations of LQG metric balls with two different values of $\gamma$ and the same GFF instance, produced using code provided by J.\ Miller. Colors indicate the distance to the center point (black dot). The black curves are LQG geodesics from the center point to other points in the ball. Notice that these geodesics have a tree-like structure. 
\textbf{Right:} Points $z_1,\dots,z_6 \in \BB C\setminus Z_\infty$ which admit unique $D$-geodesics to $\infty$ (red). The points $z_1,\dots,z_6$ are each hit once by the space-filling path $g$ of Theorem~\ref{thm-order}, in order. Points on the geodesics are hit at least twice. The blue path gives a rough indication of the order in which $g$ visits points of $\BB C$. 
}
\end{center}
\end{figure}

\begin{comment}
xi[gamma_] := gamma/(2 + gamma^2/2 + gamma/Sqrt[6])
Solve[xi[gamma] == 0.2, gamma]
Solve[xi[gamma] == 0.4, gamma]
\end{comment}

Our main theorem concerning the LQG metric gives the existence of a CaTherine wheel one of whose zippers is the tree of LQG geodesics to $\infty$. To state this precisely, we recall some background on LQG geodesics. 
Let $w\in\BB C$. A {\em $D$-geodesic from $w$ to $\infty$} is a path $\sigma : [0,\infty) \to\BB C$ such that $\sigma(0)=w$ and $D(\sigma(t), \sigma(s)) = s-t$ for each $0\leq t < s <\infty$. The following is a restatement of~\cite[Proposition 4.4]{lqg-zero-one}.

\begin{proposition} \label{prop-geo-infty}
Almost surely, for each $z\in\BB C$ there exists a $D$-geodesic from $z$ to $\infty$. For each fixed $z\in\BB C$, a.s.\ this $D$ geodesic is unique (but there can be exceptional points for which is it not unique). 
Furthermore, for each $r > 0$, there exists an $R > r$ with the following confluence property. If $\sigma$ and $\sigma'$ are $D$-geodesics from points in the open Euclidean disk $B_r(0)$ to $\infty$, then $\sigma \settminus B_R(0)= \sigma' \settminus B_R(0)$.
\end{proposition}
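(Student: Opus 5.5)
Since the statement is a restatement of \cite[Proposition 4.4]{lqg-zero-one}, one could simply cite it. A self-contained argument would derive all three assertions from two standard inputs: boundedness and compactness properties of $D$, and confluence of finite geodesics from \cite{gm-confluence}. The order I would follow is existence, then confluence, then uniqueness. The reason is that uniqueness for a fixed point will come out of the confluence statement together with a.s.\ uniqueness of finite geodesics between fixed points.

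\emph{Existence.} Fix $z$ and points $w_n \to \infty$, and let $\sigma_n$ be a $D$-geodesic from $z$ to $w_n$, parametrized by $D$-length. Since $D$ induces the Euclidean topology and $D(z,w_n)\to\infty$, each $\sigma_n$ is $1$-Lipschitz from $[0,D(z,w_n)]$ into $(\BB C,D)$. Moreover, on each compact time interval $[0,T]$ the curves $\sigma_n|_{[0,T]}$ stay in the closed $D$-ball of radius $T$ about $z$, which is compact because $D$ is a.s.\ a proper metric (metric balls are bounded). Arzelà--Ascoli and a diagonal argument then give a subsequential limit $\sigma : [0,\infty)\to\BB C$. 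For fixed $t<s$ we have $D(\sigma_n(t),\sigma_n(s)) = s-t$, and this identity passes to the limit, so $\sigma$ is a geodesic ray. This argument works simultaneously for all $z$ on the single a.s.\ event that $D$ is a proper metric.

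\emph{Confluence.} This is the main obstacle. Here I would use the quantitative confluence result of \cite{gm-confluence}. Its rough content is that for a fixed annulus $B_{4\rho}(0)\setminus B_\rho(0)$, with probability bounded below uniformly in $\rho$ (by scale invariance of the law of $D$ modulo additive constants of $\Phi$), all $D$-geodesics from points of $B_\rho(0)$ to points outside $B_{4\rho}(0)$ pass through a common point of the annulus. One can choose the scales $\rho_k = 4^k r$ so that these events depend on the GFF in nearly independent annuli. Near-independence across scales (the standard iteration for the GFF restricted to disjoint annuli) then shows that a.s.\ one of these events, say at scale $\rho_k$, occurs. Set $R = 4\rho_k$.

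Now suppose $\sigma,\sigma'$ are geodesic rays from $B_r(0)$. Approximate each ray by its initial segments, which are finite geodesics from points of $B_r(0)\subset B_{\rho_k}(0)$ to far-away points. By the event above, both rays pass through a common point $x$ of the annulus. Further, the geodesics from $B_{\rho_k}(0)$ to distinct far points must coincide after leaving $B_R(0)$. To upgrade ``common point'' to ``identical outside $B_R(0)$'', I would apply the same confluence statement to geodesics started at $x$ itself. The delicate point is that the event must hold uniformly over \emph{all} geodesics, including non-unique ones. For that reason I would use the version of \cite{gm-confluence} phrased for all geodesics between a ball and the complement of a larger ball, rather than for fixed endpoints.

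\emph{Uniqueness for fixed $z$.} Let $\sigma,\sigma'$ be two rays from a fixed $z$. By confluence they coincide outside some $B_R(0)$, so both pass through a common point $y$. Then $\sigma$ and $\sigma'$ restricted up to $y$ are two $D$-geodesics from $z$ to $y$. The point $y$ is random, however, so a.s.\ uniqueness of geodesics between fixed points does not apply directly. Instead I would use the stronger known fact that for fixed $z$, a.s.\ geodesics from $z$ to \emph{every} point are unique except for target points in a $D$-measure-zero set. Such exceptional targets do not occur on the interior of a geodesic from $z$, since an interior point of a geodesic is never a point of non-uniqueness for geodesics from its starting point. This forces $\sigma=\sigma'$.
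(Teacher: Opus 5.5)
Citing \cite[Proposition 4.4]{lqg-zero-one}, as you suggest, is exactly what the paper does. Your existence argument (Arzel\`a--Ascoli plus compactness of closed $D$-balls) is fine. The self-contained confluence and uniqueness arguments, however, have a genuine gap wherever ``two geodesics pass through a common random point'' is turned into ``they coincide.''

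\textbf{The coincidence step.} In the uniqueness step you invoke the principle that an interior point of a geodesic from $z$ is never a point of non-uniqueness for geodesics from $z$. That is a non-branching (Riemannian) property, and it is false in general geodesic spaces: take two arcs of equal length from $z$ to $y$, followed by an arc from $y$ to $w$. It also does not follow from the fact that non-uniqueness targets form a null set. If $y$ had two geodesics from $z$, the targets that inherit non-uniqueness are only those whose geodesics pass through $y$, and nothing you say makes that set non-null; the part of the geodesic beyond $y$ is itself null. For a fixed root the statement is true in LQG, but it is an LQG-specific theorem, \cite[Lemma 3.9]{lqg-zero-one}, used here in Lemma~\ref{lem-tree-geo-unique}. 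It follows from the uniqueness clause of Proposition~\ref{prop-finite-cross}.

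\textbf{The same gap in the upgrade step.} Once both rays pass through $x$, they are rays from a random point, and the proposition itself warns that rays from exceptional points need not be unique. Iterating the event at larger scales only produces common points $x_j$ and says nothing about bubbles between consecutive ones. Deducing uniqueness from confluence and then using it to finish confluence would be circular.

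\textbf{How to close it.} Obtain infinitely many good scales. Fix $z_0\in B_r(0)$ with a ray $\sigma_0$, which also passes through every $x_j$. Splice the segments of $\sigma$ and $\sigma'$ between $x_j$ and $x_{j+1}$ into $\sigma_0$. If those segments differed, this would give two geodesics from $z_0$ hitting $x_{j+1}$ at a non-terminal time, contradicting \cite[Lemma 3.9]{lqg-zero-one}. This yields confluence and fixed-point uniqueness at once.

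\textbf{The confluence input is assumed, not derived.} The event that every geodesic from a point of $B_\rho(0)$ to a point outside $B_{4\rho}(0)$ passes through a single point is, up to constants, the first assertion of Lemma~\ref{lem-confluence-pt}. This paper takes that assertion from the proof of \cite[Proposition 4.4]{lqg-zero-one}, which is the very result you are reproving. The confluence theorems of \cite{gm-confluence} concern geodesics from a single root, so the version uniform over all starting points in a ball is not available there for free.

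\textbf{The near-independence step also fails as stated.} It requires the scale-$\rho_k$ event to be determined, up to small error, by $\Phi$ restricted to the annulus. An event about global $D$-geodesics is not: whether a path crossing the annulus is a $D$-geodesic depends on the field everywhere. You would need events phrased through internal metrics on separated annuli that force the global geodesic behavior, or a zero-one argument as in \cite{lqg-zero-one}. Separately, your claim that geodesics from $B_{\rho_k}(0)$ to distinct far points coincide after leaving $B_R(0)$ is false as stated; only rays to $\infty$ can coincide there.
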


We define the {\em $D$-geodesic tree rooted at $\infty$}, denoted $Z_\infty$, to be the union of all of the $D$-geodesics to $\infty$, minus their starting points, i.e., 
\eqb   \label{eqn-geo-tree-infty} 
Z_\infty := \left\{ \sigma(t) \,:\, \text{$\sigma$ is a $D$-geodesic from a point of $\BB C$ to $\infty$ and $0<t<\infty$} \right\} .
\eqe 
Proposition~\ref{prop-geo-infty} immediately implies that $Z_\infty$ is dense in $\BB C$ and is path-connected.

\begin{theorem}[CaTherine wheel for LQG] \label{thm-lqg-wheel}
Let $\gamma\in (0,2)$ and let $D$ be the $\gamma$-LQG metric as above.
View $\BB C$ as a subset of the Riemann sphere $\BB C\cup\{\infty\}$.
There exists a unique CaTherine wheel $f : S^1 \to \BB C\cup\{\infty\}$ whose right zipper satisfies $Z^+= Z_\infty$.
\end{theorem}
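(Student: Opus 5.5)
The plan is to apply Theorem~\ref{theorem:half_zipper} with $Z = Z_\infty$, viewed as a subset of $S^2 = \BB C\cup\{\infty\}$. Existence and uniqueness of $f$ then follow at once, provided we check that $Z_\infty$ is a half-zipper with short hair. So the proof reduces to verifying conditions (1)--(3) of Definition~\ref{definition:half_zipper} almost surely.

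\textbf{Density and path structure.} Density of $Z_\infty$ in $\BB C$, hence in $S^2$, is noted right after~\eqref{eqn-geo-tree-infty}.

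\emph{Unique simple paths.} For $x\in Z_\infty$, the geodesic to $\infty$ through $x$ has a tail beyond $x$. I would show this tail is unique. If $x=\sigma(t)$ with $t>0$ and there were two geodesics from $x$ to $\infty$, then concatenating $\sigma|_{[0,t]}$ with either gives a geodesic from $\sigma(0)$. Confluence (Proposition~\ref{prop-geo-infty}, via the $B_r$/$B_R$ statement) together with the standard fact that geodesics cannot branch and remerge forces them to coincide. The cleanest route is to invoke the known a.s.\ statement that interior points of geodesics have unique geodesics onward.

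This gives a well-defined ancestral ray $\rho_x$ from each $x$. For $x,y\in Z_\infty$, confluence says $\rho_x$ and $\rho_y$ merge at some first point. The path $\rho_x$ up to the merge point, followed by $\rho_y$ backwards from it, is a simple path in $Z_\infty$.

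\emph{Uniqueness of this path.} Suppose there were another simple path in $Z_\infty$ between $x$ and $y$. Together they would create an embedded loop in $Z_\infty$. A loop is compact, so it meets only points whose rays merge after leaving a large ball $B_R(0)$. Using the parent map (each point moves only toward $\infty$ along its ray), a loop would force two distinct points with the same tail but distinct segments to their merge point, which contradicts the uniqueness of onward geodesics. I would make this precise by showing that the map sending $z$ to $D(z,\infty)$ (a Busemann-type function, normalized) is strictly monotone along rays. Then a simple path in the tree must first increase this function to the merge point and then decrease.

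\emph{Cut points.} Each point $p=\sigma(t)$ with $t>0$ lies in the interior of the segment from $\sigma(0)$ onward, so $p$ is a cut point. Here one must use points $\sigma(s)$ with $0<s<t$, which lie in $Z_\infty$.

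\textbf{Short hair.} This is the main obstacle. Fix $\eps>0$ and the spherical metric $d_0$. For large $r$, take $R$ from Proposition~\ref{prop-geo-infty}; all rays from $B_r(0)$ agree outside $B_R(0)$, giving a single trunk to $\infty$. Cover $\overline{B_r(0)}$ by finitely many points $z_1,\dots,z_N$ forming a fine grid. Let $T$ be the union of the geodesics from the $z_i$, truncated at a large radius and joined by the common trunk; compactify by adding $\infty$. By confluence, $T$ is a finite tree.

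We must then show that every component of $Z_\infty - T$ is small. Such a component consists of geodesic segments that run until they hit $T$. Points outside $B_r(0)$ are $d_0$-close to $\infty$, so their pieces are small as long as they stay outside a slightly smaller ball. For segments inside, I would use the fact that geodesics from nearby points merge quickly. Specifically, I would use the finite-crossing / local confluence estimate (Proposition~\ref{prop-finite-cross}): a.s., for each $\delta$ there is $\delta'$ such that any geodesic started within $\delta'$ of $z_i$ merges with $\rho_{z_i}$ before exiting $B_\delta(z_i)$. Alternatively, I would choose the grid so that every geodesic crossing an annulus of width $\delta$ uses one of finitely many segments, and include those segments in $T$. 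Then any hair avoiding $T$ cannot cross any annulus of size $\eps$, so it has diameter below $\eps$.

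Uniform control over all starting points, including exceptional ones, is the delicate step. The finite-crossing property for geodesics across Euclidean annuli is exactly what provides this control, and I would rely on it.
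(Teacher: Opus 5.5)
The reduction to Theorem~\ref{theorem:half_zipper} is right, but the half-zipper step has a genuine gap. Definition~\ref{definition:half_zipper} concerns paths in $Z_\infty$ with the subspace topology from $S^2$. Because $Z_\infty$ is dense, such a path need not be a finite concatenation of geodesic segments, whereas your parent-map/Busemann argument only constrains paths that run along rays. In effect you show that the abstract tree assembled from the rays has no cycles, which is exactly the weaker property that Remark~\ref{remark-weaker} warns is insufficient. For instance, nothing in your argument excludes the following loop: the segment of $\rho_q$ from $q$ to a later point $m$, followed by an arc that leaves $m$ up another branch, then up a sub-branch, and so on through infinitely many branchings, landing back at $q$. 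The Busemann function is monotone on each piece, and every point still has a unique onward geodesic. Your cut-point claim and your description of the components of $Z_\infty - T$ rest on the same assumption.

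The missing idea is the separation argument of Lemma~\ref{lem-tree-disconnect}. For a finite root $z$ and $p=\sigma(t)$, the set $H=\{w : D(w,\sigma(t)) = D(w,z)-t\}$ is closed. Every $w\in\partial H\setminus\{\sigma(t)\}$ has a second geodesic to $z$, obtained by Arzel\`a--Ascoli from geodesics to points $w_n\notin H$ with $w_n\to w$. By finiteness of $\mcl X_{t,s}$ in Proposition~\ref{prop-finite-cross}, after passing to a subsequence these all pass through one point $x\neq\sigma(t)$, so the limit does too. Since points of $Z_z$ have unique geodesics to $z$ (Lemma~\ref{lem-tree-geo-unique}), $\partial H\setminus\{\sigma(t)\}$ misses $Z_z$. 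Hence $Z_z\setminus\{\sigma(t)\}$ splits into two relatively open sets. Because paths are compact, Lemma~\ref{lem-infty-to-z} transfers this to $Z_\infty$, giving Lemma~\ref{lem-disconnect-infty}. In the loop above, applying this at a point strictly between $q$ and $m$ puts $q$ on $\partial H$, which is impossible.

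Your short-hair sketch also does not work as written, for three reasons.
\begin{enumerate}
\item Proposition~\ref{prop-finite-cross} is about geodesics emanating from a single fixed point (confluence near the root). It says nothing about geodesics from all points near a grid point $z_i$ merging quickly into $\rho_{z_i}$.
\item Even granting such a radius $\delta'(z_i)$, it is random and depends on $z_i$, so a grid chosen in advance need not be covered by the balls $B_{\delta'(z_i)}(z_i)$. This is precisely the uniformity over exceptional starting points that you flag but do not supply.
\item $T$ must be a compact subset of $Z_\infty$, so it can contain neither $\infty$ nor grid points lying outside $Z_\infty$.
\end{enumerate}
The paper gets uniformity by measuring hairs in $D$-time from their starting points. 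Partition $[0,T']$ into steps of length at most $\delta/2$ and apply Proposition~\ref{prop-finite-cross} at each step. This gives a finite $\mcl Y\subset Z_z$ which every geodesic from a point of $\mcl B_{T'}(z)$ to $z$ hits within time $\delta$ (Lemma~\ref{lem-finite-merge}). This transfers to geodesics to $\infty$ that enter $B_R(0)$ (Lemma~\ref{lem-finite-infty}). One then takes $T$ to be the union of the geodesic segments from the points of $\mcl Y$ to a common point outside $B_R(0)$. Uniqueness of paths places each component of $Z_\infty\setminus T$ in one of two kinds of set:
\begin{itemize}
\item points whose geodesic passes through a given $y\in\mcl Y$; those lying in $B_R(0)$ are within $D$-distance $\delta$ of $y$;
\item points whose geodesic avoids $B_R(0)$; these are spherically close to $\infty$.
\end{itemize}
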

 
The CaTherine wheel $f$ can be thought of as the contour exploration of the LQG geodesic tree. The following theorem makes this precise and also establishes some other properties of $f$ which are reminiscent of properties of random space-filling curves seen elsewhere in probability. See Figure~\ref{fig-ball-sim}, right, for an illustration.

\begin{theorem}[Properties the LQG CaTherine wheel] \label{thm-order}
Let $f$ be as in Theorem~\ref{thm-lqg-wheel}.
\begin{enumerate}
\item 
Almost surely, $f^{-1}(\infty)$ consists of a single point, and there exists a continuous space-filling curve $g : \BB R\to \BB C$ which is a re-parametrization of $f|_{S^1\settminus f^{-1}(\infty)}$ such that $g(0) = 0$ and $\mu(g[a,b]) = b-a$ for each $a,b\in\BB R$ with $a < b$, where $\mu$ is the LQG area measure. 
\item Let $z,w\in\BB C \setminus Z_\infty$ such that there is a unique $D$-geodesic from each of $z$ and $w$ to $\infty$ (by Proposition~\ref{prop-geo-infty}, this is a.s.\ the case for a fixed $z,w\in\BB C$). Then with $g$ as above, each of $z$ and $w$ has a unique preimage under $g$. Moreover, $z$ is hit before $w$ by $g$ if and only if the $D$-geodesic from $z$ to $\infty$ merges into the $D$-geodesic from $w$ to $\infty$ from the right side of the latter. 
%\item Each point of $Z_\infty$ is visited at least twice by $g$. 
\end{enumerate}
\end{theorem}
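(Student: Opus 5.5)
We prove the two parts of Theorem~\ref{thm-order} separately, using only the structure of $f$ provided by Theorem~\ref{thm-lqg-wheel} together with Proposition~\ref{prop-geo-infty} and basic properties of $\mu$.

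\textbf{Part (1).} First we show $f^{-1}(\infty)$ is a single point. Suppose $f^{-1}(\infty)$ contains two points $s\neq t$. Then the two arcs $J_1,J_2$ of $S^1$ between $s$ and $t$ have images $f(J_1),f(J_2)$ which are closed disks, cover the sphere, and meet along the common boundary. Since $f$ never enters the interior of its past, the common boundary $\partial f(J_1)$ is a Jordan curve through $\infty$, and it lies in the closure of both zippers. In the zipper theory of \cite{Catherine_wheels}, such a boundary arc through $\infty$ corresponds to a pair of distinct branches of $Z^+$ separated by a branch of $Z^-$, all accumulating at $\infty$ in at least two directions. We rule this out with the confluence statement of Proposition~\ref{prop-geo-infty}. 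Outside $B_R(0)$, all geodesics from $B_r(0)$ coincide, so $Z_\infty\setminus B_R(0)$ near $\infty$ consists of a single ray towards $\infty$ plus components of small size. Letting $r\to\infty$, $\infty$ is the ``end'' of a single ray. This forces $\infty$ to have only one complementary side in the sense required, so the preimage is a single point.

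Next we construct $g$. Set $g(u)=f(\phi(u))$, where $\phi:\R\to S^1\setminus f^{-1}(\infty)$ is chosen so that $\mu(f[\phi(a),\phi(b)])=b-a$. This requires three facts:
\begin{itemize}
\item[(i)] $\mu$ has no atoms, and $\mu(f(J))>0$ for every nondegenerate interval $J$, since $f(J)$ is a closed disk with nonempty interior and $\mu$ charges every open set;
\item[(ii)] $\mu$ of the boundary of $f(J)$ is $0$, so that $\mu$ is additive along the curve;
\item[(iii)] $\mu(\C)=\infty$ on both sides of the puncture.
\end{itemize}
Fact (ii) holds because $\partial f(J)$ lies in the closure of $Z^+\cup Z^-$, a union of countably many geodesic segments plus boundary pieces, and LQG geodesics have zero $\mu$-measure. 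More carefully, we would use that the boundary is a union of at most countably many arcs in $Z^\pm$ (up to the endpoints). I expect (ii) to be the main obstacle, since $Z^-$ is not described by geodesics; we would handle it by showing that boundary points of $f(J)$ are visited at least twice, and that the set of multiply visited points has $\mu$-measure zero by a Fubini argument over a fixed point $z$. For fact (iii), note that $\mu(f[\phi(0),t])$ is continuous and strictly increasing in $t$, and tends to $\infty$ as $t\to f^{-1}(\infty)$ because it eventually covers every ball. Finally, translate the parameter so that $g(0)=0$.

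\textbf{Part (2).} A point $z\notin Z_\infty$ with a unique geodesic $\sigma_z$ has a unique preimage under $g$: multiple visits occur only at points of $Z^\pm$ or on the boundary of the wheel, and uniqueness of the geodesic excludes $z$ from $\overline{Z^-}$-type branching. Now let $\sigma_z$ merge into $\sigma_w$ at a point $p$. The subtree consisting of the branch of $\sigma_z$ up to $p$ lies on one side of $\sigma_w$. Since $f$ is the clockwise contour exploration of $Z^+$, the depth-first order shows that if this branch attaches from the right, it is explored before the exploration reaches the side of $\sigma_w$ containing $w$. This is made precise by applying the half-zipper construction of Theorem~\ref{theorem:half_zipper} to the finite tree $\sigma_z\cup\sigma_w$ and passing to the limit.
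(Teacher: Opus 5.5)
Your reduction for $\infty$ to ``two inequivalent branches of $Z^+$ landing at $\infty$'' is right in spirit; it is Lemma~\ref{lemma:unique_preimage}, which is also what drives the paper's Lemma~\ref{lem-lqg-preimage}. The gap is in the step that rules those branches out. Proposition~\ref{prop-geo-infty} only controls geodesics \emph{started in} $B_r(0)$. A second ray to $\infty$ would be built from geodesics started far away. Suppose a proper ray $\eta$ from $p$ leaves $\sigma_p$ at a point $m$ and escapes to $\infty$. By the tree structure, $\sigma_q$ passes through the fixed point $m$ for every $q\in\eta$ beyond $m$. This is consistent with Proposition~\ref{prop-geo-infty} for every $r$, since the tails of these $\sigma_q$ are just the tail of $\sigma_m$. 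So ``$Z_\infty\setminus B_R(0)$ is a single ray plus small components'' does not follow, and letting $r\to\infty$ does not help because $R$ grows with $r$. (``Small'' is also vacuous near $\infty$ in the spherical metric.)

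The missing input is a no-return property, which the paper takes from Lemma~\ref{lem-confluence-pt}. There are points $x_n\to\infty$ such that every geodesic to $\infty$ passes through all but finitely many $x_n$, and geodesics from outside $B_{R_n'}(0)$ never enter $B_{R_n}(0)$. Hence every unbounded path in $Z_\infty$ hits infinitely many $x_n$. Uniqueness of paths (Lemma~\ref{lem-disconnect-infty}) then forces any two rays to $\infty$ to agree off a compact set.

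For $z$, ``uniqueness of the geodesic excludes $z$ from $Z^-$-type branching'' restates the claim rather than proving it. You must show that every ray of $Z_\infty$ landing at $z$ is equivalent to $\sigma_z$. The paper concatenates the ray with the geodesic from its far endpoint to $\infty$ and loop-erases. Using the $x_n$ and uniqueness of tree paths, each tail $\tilde\eta|_{[t,\infty)}$ is the geodesic from $\tilde\eta(t)$. Letting $t\to0$ shows $\tilde\eta$ is a $D$-geodesic from $z$, hence equals $\sigma_z$.

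For the ordering, applying Theorem~\ref{theorem:half_zipper} to $\sigma_z\cup\sigma_w$ and passing to a limit is not available. That set is neither dense nor a half-zipper, and there is no limit statement to invoke. None is needed. By construction, the domain of $f$ is the order completion of the ends of $Z_\infty$ with their planar circular order. Lemma~\ref{lemma:circular_order}, applied to $(\infty,z,w)$ (all with unique preimages), identifies the cyclic order of the preimages with the cyclic order of the three landing ends. That cyclic order is determined by the side on which $\sigma_z$ merges into $\sigma_w$. Cutting $S^1$ at the single point $f^{-1}(\infty)$ turns it into the order in which $g$ visits $z$ and $w$.

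On part (1), the paper just sets $\phi(t)=\mu(f[0,t])$ and uses only that $\mu$ charges open sets and has no atoms. Your items (ii)--(iii) go beyond what it writes, but your justifications for them are wrong as stated:
\begin{itemize}
\item For (iii), as $t$ increases to $f^{-1}(\infty)$ along one side, $f[0,t]$ exhausts only the closed disk that is the image of that half-circle, not every ball.
\item For (ii), a Fubini argument over a fixed point shows only that the multiply-visited set is Lebesgue-null. Since $\mu$ is singular with respect to Lebesgue measure, you would need the corresponding statement at $\mu$-typical points.
\end{itemize}
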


We will prove Theorem~\ref{thm-lqg-wheel} in Section~\ref{sec-lqg-proof} by checking the conditions of Theorem~\ref{theorem:half_zipper}. To do this, we will cite some results from the existing LQG literature (mostly from~\cite{lqg-zero-one}) which give certain geometric properties for LQG geodesics. Then, we will deduce the conditions of Theorem~\ref{theorem:half_zipper} from these properties using purely topological arguments. Our proof can be read without any knowledge of LQG if one is willing to take some external results as black boxes. Theorem~\ref{thm-order} will follow easily from some features of the construction of $f$ (see in particular Lemmas~\ref{lemma:unique_preimage} and~\ref{lemma:circular_order}). 

We state and prove Theorems~\ref{thm-lqg-wheel} and~\ref{thm-order} in the setting of the whole-plane GFF for convenience, but analogous statements should be true for other variants of the GFF, e.g., the variant corresponding to the unit-area LQG sphere (see~\cite[Section 4.5]{wedges} or~\cite{dkrv-lqg-sphere,ahs-sphere}). We expect that such variants can be deduced from Theorem~\ref{thm-lqg-wheel} and the intermediate results in its proof via local absolute continuity arguments. We expect that Theorem~\ref{thm-lqg-wheel} is also true in the critical case $\gamma=2$, but we will not treat this case here since several of the results we need to cite have only been proven for $\gamma \in (0,2)$. 

Theorem~\ref{thm-lqg-wheel} is an LQG metric analog of other theorems in the literature which construct a peano curve from one or more trees in the plane. Examples of such theorems include the construction of space-filling SLE$_\kappa$ for $\kappa\in (4,8)$ in~\cite{ig4} and the construction of a peano curve for the geodesic tree in the directed landscape~\cite{bhatia-peano-curve}. It may be possible to use Theorem~\ref{theorem:half_zipper} to construct CaTherine wheels (equivalently, peano curves) from geodesic trees in other random geometries, e.g., the Poisson roads metric~\cite{bck-poisson-roads,kendall-poisson-roads} or the hypothetical directed versions of the LQG metric~\cite[Conjecture 2.2]{bg-bipolar-directed}.  

Theorem~\ref{thm-lqg-wheel} is new when $\gamma \in (0,2)\settminus\{\sqrt{8/3}\}$. But, in the special case when $\gamma=\sqrt{8/3}$, the CaTherine wheel corresponding to the LQG geodesic tree is already well-studied, at least in the unit-area LQG sphere case. This is because $\sqrt{8/3}$-LQG is equivalent to the Brownian map~\cite{lqg-tbm1,lqg-tbm2}; see Section~\ref{sec-wheel-prob} for details. 

%Indeed, it is shown in~\cite{lqg-tbm1,lqg-tbm2} that $\BB C\cup\{\infty\}$, equipped with the $\sqrt{8/3}$-LQG metric induced by the LQG sphere field, is isometric (as a metric space) to the {\em Brownian map} of~\cite{legall-uniqueness,miermont-brownian-map}. In this case $f$, when parametrized by LQG area, is the same as the quotient map $S^1 \to \BB C\cup\{\infty\}$ arising from the usual Schaeffer-style construction of the Brownian map (see~\cite{legall-uniqueness,miermont-brownian-map}). This perspective plays a fundamental role in~\cite{lqg-tbm3}. In this setting, the zippers corresponding to $f$ admit an explicit description in terms of the Brownian snake. We do not have any reason to believe that such a nice description exists for general $\gamma\in (0,2)$. 

\subsection{CaTherine wheels in probability theory} 
\label{sec-wheel-prob}

As noted in Section~\ref{sec-wheel}, CaTherine wheels arise frequently in probability theory, although the term ``CaTherine wheel'' is not used in the probability literature. Instead, CaTherine wheels are usually referred to as peano curves (satisfying certain conditions). Zippers in probability theory are usually referred to instead as pairs of trees. To put our results in context, we will briefly survey the appearances of CaTherine wheels in probability. 

Perhaps the most well-studied CaTherine wheels in probability theory are {\em Schramm-Loewner evolution} (SLE$_\kappa$) curves for $\kappa \geq 8$~\cite{schramm0}. Specifically, whole-plane SLE$_\kappa$ from $\infty$ to $\infty$ is a CaTherine wheel (when viewed as a curve in the Riemann sphere) for $\kappa \geq 8$. The associated zipper consists of the trees of SLE$_{16/\kappa}(2- 16/\kappa)$ flow lines of a whole-plane Gaussian free field; see~\cite[Section 1.2.3]{ig4} and~\cite[Footnote 4]{wedges}. Space-filling SLE$_\kappa$ for $\kappa\in (4,8)$ is not exactly a CaTherine wheel since the image of an interval under such a curve is not homeomorphic to a closed disk (rather, it has cut points). But, space-filling SLE$_\kappa$ for $\kappa \in (4,8)$ has many similar features as CaTherine wheels, including an associated zipper-like pair of trees (c.f.\ Remark~\ref{remark-weaker} and~\cite[Section 5]{Catherine_wheels}). The seminal {\em mating of trees} theorem of Duplantier-Miller-Sheffield~\cite{wedges} gives a description of the zipper associated with space-filling SLE$_\kappa$ for $\kappa > 4$ in terms of a pair of correlated Brownian motions. 

Another important random CaTherine wheel is the peano curve on the {\em Brownian map}. The Brownian map is a random metric space $(X,D)$ homeomorphic to $S^2$ which arises as the Gromov-Hausdorff scaling limit of uniform triangulations, quadrangulations, etc.~\cite{legall-uniqueness,miermont-brownian-map}. It can be constructed as a metric quotient of $S^1$ under a certain equivalence relation defined in terms of the {\em Brownian snake}. The quotient map $f : S^1\to X$ is a CaTherine wheel. The associated zipper has a simple description in terms of the Brownian snake. In particular, one of the half-zippers is the geodesic tree on the Brownian map (rooted at a uniformly sampled marked point) and the other has the law of the continuum random tree. See~\cite[Section 4]{tbm-characterization} for explanations. It is shown in~\cite{lqg-tbm1,lqg-tbm2} that $\BB C\cup\{\infty\}$, equipped with the $\sqrt{8/3}$-LQG metric induced by the LQG sphere field, is isometric (as a metric space) to the Brownian map. Hence the CaTherine wheel associated with the Brownian map is the same as the CaTherine wheel in Theorem~\ref{thm-lqg-wheel} for $\gamma=\sqrt{8/3}$, with the LQG sphere field in place of the whole-plane GFF. We do not have any reason to expect that the zippers in Theorem~\ref{thm-lqg-wheel} for general $\gamma\in (0,2)$ admit a simple description analogous to the Brownian snake description in the Brownian map setting. 

Yet another CaTherine wheel in probability is the peano curve for the geodesic tree in the {\em directed landscape}~\cite{bhatia-peano-curve}. The construction of this curve is inspired by the construction of the Toth-Werner curve on the Brownian web~\cite{tw-brownian-web-peano}, which is a space-filling curve built from a pair of trees but is not quite a CaTherine wheel. 

In all of the above settings, both halves of the zipper associated with the random CaTherine wheel (or the space-filling curve which is not quite a CaTherine wheel) are well-understood. An important novel feature of this paper is that we only have a priori information about one half of the associated zipper.

\section{CaTherine wheels from half-zippers}
\label{section:half_zippers}

In this section we will prove Theorem~\ref{theorem:half_zipper} and Proposition~\ref{proposition:wheels_to_short_hair} using results from~\cite{Catherine_wheels} and elementary topological arguments.  

\begin{remark}
It is a fact that if $X$ is a topological space homeomorphic to a finite graph, any
embedding of $X$ in $S^2$ is tame; i.e.\/ there is a homeomorphism of $S^2$ to
itself taking the image of $X$ to a finite polygonal graph. See e.g.\/ \cite{Newman}. 
Thus we may reason about compact paths in $Z$ (and, if $Z$ has short hair, 
noncompact paths) combinatorially.
\end{remark}

\subsection{Half-zippers}

We start off by establishing that a half-zipper with short hair (Definition~\ref{definition:half_zipper}) automatically satisfies several other useful properties. 

\begin{lemma}[Increasing union]\label{lemma:increasing_union}
Suppose $Z$ is a half-zipper with short hair. Then we may write $Z$ as an
increasing union $Z:=\bigcup_n T_n$ where each $T_n$ is a finite tree, and such
that every component of $Z-T_n$ has diameter $<1/n$.
\end{lemma}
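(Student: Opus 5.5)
The plan is to build the $T_n$ inductively from the trees supplied by the short-hair condition. The cut point condition will then force every point of $Z$ into some $T_n$, so no points need to be added by hand.

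\textbf{Step 1: finite trees from short hair.} Fix a metric $d_0$ on $S^2$. By condition (3) of Definition~\ref{definition:half_zipper}, for each $n$ there is a compact subset $S_n\subset Z$ homeomorphic to a finite tree such that every path component of $Z-S_n$ has diameter $<1/n$.

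\textbf{Step 2: making the trees nested.} Set $T_1=S_1$. Given $T_{n-1}$, let $T_n$ be the union of $T_{n-1}$, $S_n$, and, if these are disjoint, one connecting arc.
\begin{itemize}
\item \emph{Choosing the connecting arc.} Pick $a\in T_{n-1}$, $b\in S_n$ and the unique arc in $Z$ from $a$ to $b$. Using compactness, replace it by the subarc from its last point in $T_{n-1}$ to its next point in $S_n$. This arc meets each of the two trees only at an endpoint.
\item \emph{$T_n$ is a finite tree.} By unique arc-connectivity, the intersection of two compact subtrees $A,B\subset Z$ is path connected: the unique arc in $Z$ between two points of $A\cap B$ lies in both $A$ and $B$. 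So $A\cap B$ is a compact path-connected subset of the finite tree $A$, hence a finite subtree. Gluing two finite trees along a common subtree, or joining them by an arc meeting each at a single point, gives a finite graph. This graph has no cycles, since a cycle would yield two distinct arcs in $Z$ between two points. Hence $T_n$ is a finite tree.
\item \emph{The diameter bound persists.} Since $S_n\subset T_n$, every path component of $Z-T_n$ lies in a path component of $Z-S_n$. So each has diameter $<1/n$, and $T_{n-1}\subset T_n$ by construction.
\end{itemize}

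\textbf{Step 3: the union is all of $Z$.} This is the step where the real content lies. Let $p\in Z$. By the cut point condition there are $q,r\in Z$ such that $p$ lies in the interior of the arc $[q,r]\subset Z$. Choose points $q',r'$ on this arc on opposite sides of $p$, and set $\delta=\min(d_0(p,q'),d_0(p,r'))>0$. Take $n>1/\delta$ and suppose, for contradiction, that $p\notin T_n$.
\begin{itemize}
\item \emph{The subarc $[p,q']$ meets $T_n$.} If it did not, it would be a path-connected subset of $Z-T_n$ containing $p$ and $q'$. It would therefore lie in a single path component of diameter at least $\delta>1/n$, which is impossible. So $[p,q']$ meets $T_n$ at some point $a\neq p$.
\item \emph{Likewise for $[p,r']$.} The same argument gives a point $b\in[p,r']\cap T_n$ with $b\neq p$.
\item \emph{Conclusion.} Since $T_n$ is path connected, it contains an arc from $a$ to $b$. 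By uniqueness of arcs in $Z$, this arc is the subarc of $[q',r']$ between $a$ and $b$, which passes through $p$. Thus $p\in T_n$, a contradiction.
\end{itemize}

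Hence $Z=\bigcup_n T_n$, and every path component of $Z-T_n$ has diameter $<1/n$.

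The only technical point is Step 2, checking that the modified union is again homeomorphic to a finite tree. This uses only unique arc-connectivity, compactness of the pieces, and tameness of finite graphs in $S^2$.
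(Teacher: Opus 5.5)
Your proof is correct and follows the same route as the paper. Both arguments nest the short-hair trees by taking unions, observe that enlarging the tree can only shrink the complementary components, and then use the cut-point condition to show that no point of $Z$ is missed; your Step 3 is a finite-stage, fully spelled-out version of the paper's remark that a point of $Z-\bigcup_n T_n$ would be its own path component and hence could not be a cut point. Your connecting arc in Step 2 is a real gain in care: the paper's claim that any finite union of finite subtrees is a finite subtree fails when the trees are disjoint (for instance, one-point trees can satisfy the short-hair condition for small $n$).
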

\begin{proof}
Since every point of $Z$ is a cut point, it contains no embedded loops. It follows
that the union of any finite set of finite subtrees of $Z$ is a finite subtree.
Let $T_n'$ be a sequence of finite subtrees of $Z$ as in the definition of
the short hair property with $\epsilon = 1/n$ and let $T_n = \cup_{i\le n}T_n'$.
Then $T_n$ is an increasing family. Let $T_\infty = \cup_n T_n$.
Then by construction every component of $Z-T_\infty$ has diameter $0$ and therefore
consists of at most one point. But any point of $Z-T_\infty$ could not be a cut
point of $Z$, and therefore $Z=T_\infty$ as claimed.
\end{proof}
From now on, if $Z$ is a half-zipper with short hair, we fix a sequence $T_n$ of
finite subtrees as in Lemma~\ref{lemma:increasing_union}.

\begin{definition}[Landing ray]\label{definition:landing}
A {\em ray} is a half-open interval in $Z$ (i.e.\/ a subspace homeomorphic to $[0,1)$).
It is a {\em proper ray} if the inclusion $[0,1) \subset Z$ is a proper embedding. 
A ray {\em lands} if it extends continuously to an embedding of $[0,1]$; the
image of $1$ is called the {\em landing point}.

A half-zipper has the {\em strong landing property} if every ray in $Z$ lands, and
if every point in $S^2$ is the landing point of some ray in $Z$.
\end{definition}

\begin{lemma}[Limit rays]\label{lemma:limit_rays}
Suppose $Z$ is a half-zipper with short hair. Fix $q\in Z$ and let $p_i \in Z$
be any sequence of points, and let $\eta_i\subset Z$ be the unique embedded path 
in $Z$ from $q$ to $p_i$.  Suppose $p_i \to p \in S^2$. 
Then after passing to a subsequence if necessary we may arrange that
\begin{enumerate}
\item{for any fixed $n$ the intersections $\eta_i \cap T_n$ converge as
subsets to a limit $\sigma_n \subset T_n$;}
\item{the $\sigma_n$ are an increasing nested sequence of (possibly degenerate) paths; and}
\item{the union $\sigma:=\bigcup_n \sigma_n$ is either a ray which lands at $p$ or
the unique path in $Z$ from $q$ to $p$ (which can only happen if $p\in Z$).}
\end{enumerate}
Consequently, any half-zipper with short hair has the strong landing property.
\end{lemma}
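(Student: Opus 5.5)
The plan is a diagonal compactness argument on the finite trees $T_n$, after which the short hair property (Lemma~\ref{lemma:increasing_union}) controls the tail of the limit. Each $\eta_i$ is the unique embedded path in $Z$ from $q$ to $p_i$. Since $T_n$ is a finite tree containing $q$ (for $n$ large) and $\eta_i$ is an embedded path starting at $q$, the intersection $\eta_i\cap T_n$ is a single subpath of $T_n$ starting at $q$. Uniqueness of paths in $Z$ makes this intersection connected: if $\eta_i$ left $T_n$ and re-entered it, the excursion together with the $T_n$-path between the exit and entry points would give two distinct paths between two points of $Z$.

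A finite tree has only finitely many combinatorial types of subpaths from $q$, namely a choice of terminal edge, with the endpoint then varying in a compact interval. So for each fixed $n$ a subsequence makes $\eta_i\cap T_n$ converge in the Hausdorff sense to a possibly degenerate subpath $\sigma_n\subset T_n$ from $q$. A diagonal argument does this for all $n$ at once, which gives (1). Because $T_n\subset T_{n+1}$, we have $\eta_i\cap T_n=(\eta_i\cap T_{n+1})\cap T_n$. A limit of the $T_{n+1}$-path restricted to $T_n$ is the limit of the restrictions, up to a small check at the endpoint, where one uses that the combinatorial type has stabilized along the subsequence. This shows $\sigma_n=\sigma_{n+1}\cap T_n$, so the $\sigma_n$ are nested and (2) follows.

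For (3), the key estimate is the following. Let $x_i^n$ be the last point of $\eta_i$ in $T_n$. The remainder of $\eta_i$ after $x_i^n$ lies in a single component of $Z-T_n$ together with its attaching point, so it has diameter $\le 1/n$. Hence $d_0(x_i^n,p_i)\le 1/n$, and passing to the limit, the endpoint $x^n$ of $\sigma_n$ satisfies $d_0(x^n,p)\le 1/n$. There are now two cases.

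\textbf{Case 1: the $\sigma_n$ stabilize,} i.e.\ $\sigma_n=\sigma_N$ for all $n\ge N$. Then $x^n=x^N$ for all large $n$, so $x^N=p$. Thus $p\in Z$, and $\sigma$ is a compact embedded path from $q$ to $p$. By uniqueness of paths it is the path in $Z$ from $q$ to $p$.

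\textbf{Case 2: the $\sigma_n$ grow infinitely often.} Then $\sigma$ is a nested union of arcs from $q$, hence homeomorphic to $[0,1)$. It is a ray, and it can be parametrized so that $\sigma_n$ corresponds to $[0,t_n]$ with $t_n\to 1$. To show that it lands at $p$, note that the portion of $\sigma$ after $x^n$ lies in $\bigcup_{m>n}(\sigma_m-\sigma_n)$. Each piece $\sigma_m-\sigma_n$ lies in the closure of a single component of $Z-T_n$: this holds for $\eta_i$, and it passes to limits because that component is determined by the edge of $T_m$ leaving $T_n$, which is eventually constant. So the tail has diameter $\le 1/n$ and contains points within $1/n$ of $p$. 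Therefore $\sigma(t)\to p$ as $t\to1$.

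The extension is an embedding provided $p\notin\sigma$. If $p=\sigma(t_0)$, then $\sigma$ restricted to $[t_0,1]$ would be a loop in $Z$ through $p$. By tameness this gives a Jordan curve, which contradicts the uniqueness of paths once we use that points near $p$ on $\sigma$ are joined inside $Z$ both along $\sigma$ and through the other arcs. This step, excluding a ray that lands on itself, is the one I expect to be the most delicate. I would handle it by choosing $n$ so large that $p$'s neighborhood of radius $1/n$ misses the initial segment $\sigma_{N}$ with $\sigma(t_0)\in\sigma_N$, unless $p$ is an endpoint. In the other case I would note that the tail lies in a single component of $Z-T_n$, while $p\in T_n$ is its attaching point, and such a component is attached at a single point. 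The landing point therefore cannot lie in the interior of $\sigma_N$.

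For the consequence, every ray lands by the same tail estimate: a ray eventually stays in a single component of $Z-T_n$ for each $n$, and these components have shrinking diameters. For surjectivity, given $p\in S^2$, density of $Z$ gives $p_i\in Z$ with $p_i\to p$, and the main statement then yields either a ray landing at $p$ or, if $p\in Z$, a path from $q$ to $p$. In the latter case, extend past $p$ using the cut-point property to obtain a ray containing $p$ in its interior, and take a subray ending at $p$ as its landing point.
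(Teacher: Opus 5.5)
Your proposal is correct and follows the same route as the paper: diagonal compactness of the $\eta_i\cap T_n$ in the finite trees $T_n$, nesting from $T_n\subset T_{n+1}$, and the bound $<1/n$ on components of $Z-T_n$ to control the tail. Your endpoint estimate $d_0(x^n,p)\le 1/n$ and your exclusion of a self-landing ray fill in points the paper leaves implicit. For the self-landing step, your first observation already suffices: if $\sigma(t)\to p=\sigma(t_0)$, then $\sigma[t_0,1)\cup\{p\}$ is the image of a continuous injection of a circle, hence a Jordan curve in $Z$ (no tameness is needed), contradicting uniqueness of paths. The alternative you sketch afterwards should be dropped. It asks for a neighborhood of $p$ to miss $\sigma_N$ although $p\in\sigma_N$, and it relies on components of $Z-T_n$ being ``attached at a single point'' in $S^2$, which essentially assumes what is being proved.
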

\begin{proof}
Since each $T_n$ is finite it is compact, and therefore $\eta_i \cap T_n$ has
a convergent subsequence for each fixed $n$. Pass to a diagonal subsequence of the
indices $i$, so that $\eta_i \cap T_n$ converges for each fixed $n$ to some
$\sigma_n$. Each $\sigma_n$ is a possibly degenerate path (i.e.\/ for some $n$ 
it might consist only of the initial point $q$), and since 
$T_n \subset T_{n+1}$ we necessarily have 
$\sigma_n \subset \sigma_{n+1}$ as an initial segment, so these paths form a nested
family. Since the diameter of $\sigma - \sigma_n$ is at most $1/n$ for any $n$ 
either $\sigma$ is a path that ends at $p$ or a ray that lands at $p$.
Note that if $p\in S^2-Z$ then the union $\sigma$ is necessarily a (proper) ray.

Now let us show that $Z$ has the strong landing property. First of all, because
$Z$ is dense, we have already constructed a landing ray that lands at any
point $p\in S^2 - Z$. Thus it remains to check that every ray lands. Let $\sigma$ be
a proper ray. Then if we define $\sigma_n = \sigma \cap T_n$ we have $\sigma = \bigcup_n \sigma_n$
and the diameter of $\sigma - \sigma_n$ is at most $1/n$, and thus $\sigma$ 
necessarily lands and we are done.
\end{proof}

\begin{definition}[Hairy]\label{definition:hairy}
A half-zipper is {\em hairy} if every embedded path $I\subset Z$ has branching
on both sides; i.e., there are nontrivial oriented paths $\alpha,\beta \subset Z$
that begin on $I$ and intersect $I$ only at these initial points, and which lie 
respectively to the right and to the left of the oriented interval $I$ in $S^2$. 
\end{definition}

\begin{lemma}[Short hair hairy]\label{lemma:short_hair_hairy}
Suppose $Z$ is a half-zipper with short hair. Then $Z$ is hairy.
\end{lemma}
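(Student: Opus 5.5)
Fix an embedded path $I\subset Z$ with an orientation. We must find a nontrivial path $\alpha\subset Z$ starting on $I$, meeting $I$ only at its initial point, and lying to the right of $I$; the left side is symmetric. The plan is to argue by contradiction: if there is no branching into one side of $I$, then some open disk in that side contains no point of $Z$. That contradicts the density of $Z$.

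First I reduce to a convenient situation. Since subpaths of $I$ lie in $I$, I may shrink $I$ to a compact subarc $I'$ in its interior. I choose $n$ large enough that $I'\subset T_n$; such an $n$ exists because the proof of Lemma~\ref{lemma:increasing_union} (or compactness, together with the construction of $T_n$) shows compact paths lie in some $T_n$. Using the tameness remark, I straighten $T_n$ to a polygonal tree. Then I pick a small closed topological disk $U$ on the right side of $I'$. It should be bounded by $I'$ together with an arc $\delta$ that meets $T_n$ only at the endpoints of $I'$, and its interior should be disjoint from $T_n$. I take $U$ small enough that its interior contains a Euclidean ball $B$ of radius $\rho>0$.

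Next I fix a point $z\in Z\cap B$, which exists since $Z$ is dense, and look at the unique path $\eta$ in $Z$ from $z$ to a point of $I'$. The path $\eta$ must leave $U$ or reach $I'$. Suppose it reaches $I'$ before leaving $U$, i.e.\ it approaches $I'$ from the right. Then the final segment of $\eta$, reversed, is a nontrivial path starting on $I$, meeting $I$ only at its first point, and lying to the right, as desired. Otherwise $\eta$ exits $U$ through $\delta$. To exclude this, I use short hair. Choose $m\ge n$ with $1/m<\rho$, and choose $z$ in $B$ with $\operatorname{dist}(z,\partial U)>\rho$. If $z\in T_m$, I consider the component of $T_m\cap U$ containing $z$. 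If $z\notin T_m$, I consider the component $C$ of $Z-T_m$ containing $z$. That component has diameter $<1/m<\rho$, so it stays inside $U$ and attaches to $T_m$ at a point inside $U$. This lets me replace $z$ by a point of $T_m\cap \operatorname{int} U$.

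So the real question is whether the finite tree $T_m$ has a point inside $U$ connected to $I'$ through $U$. Since $T_m$ is a tree containing $T_n\supset I'$, the subtree $T_m$ restricted to the region right of $I'$ consists of finitely many branches. Each branch is a finite subtree attached to $T_n$ at a single point. If no branch attaches to $I'$ from the right, then every point of $T_m$ in $U$ lies on a branch attached to $T_n-I'$, and that branch enters $U$ by crossing $\delta$. I then shrink $U$ so that its boundary arc $\delta$ hugs $I'$ closely. The tree $T_m$ is finite, and any branch crossing near $I'$ without attaching to it stays a positive distance away in the polygonal picture. Hence there is a smaller disk $U'\subset U$, still adjacent to $I'$, whose interior misses $T_m$.

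The main obstacle is that $m$ depends on $\rho$, and $\rho$ depends on $U$, so shrinking $U$ seems circular. I resolve it by iterating. Suppose no branch of any $T_m$ attaches to the right side of $I'$. Then the components of $Z-T_n$ lying in the right side of $I'$ adjacent to $I'$ all attach at points of $T_n-I'$. Take any point $x$ in the interior of $I'$, and let $V$ be the right half of a small disk around $x$ that misses $T_n-I'$. Any component of $Z-T_m$ meeting $V$ either attaches inside $V$, hence to $I'$ from the right, or has diameter at least $\operatorname{dist}(V', \partial V)$ for a smaller concentric half-disk $V'$. Fix $V'$, choose $m$ so that $1/m$ is below that distance, and apply density of $Z$ in $V'$. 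The resulting point connects to $I'$ from the right through a finite tree inside $V$. This yields $\alpha$, and the same argument on the left yields $\beta$.
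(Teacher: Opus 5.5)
\textbf{There is a genuine gap: your last paragraph does not resolve the circularity you identified, it only moves it into a false step.} You write that a component of $Z-T_m$ meeting $V$ ``either attaches inside $V$, hence to $I'$ from the right.'' But $V$ was chosen to miss $T_n-I'$, not $T_m$. A component of $Z-T_m$ can attach at a point of $T_m\cap V$ that lies on a branch of $T_m$ entering $V$ through the semicircular part of $\partial V$. The path in $T_m$ from that point to $I'$ then leaves $V$.

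Your own fourth paragraph shows that, under the contradiction hypothesis, this is the \emph{only} possibility: every point of $T_m$ near $I'$ on the right lies on such an incoming branch. So the claim that the point of $Z\cap V'$ ``connects to $I'$ from the right through a finite tree inside $V$'' is unsupported. If you shrink $V$ so that it misses $T_m$, the requirement $1/m<\operatorname{dist}(V',\partial V)$ forces you to increase $m$, and you are back where you started.

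The underlying problem is that nothing at a fixed scale $m$ excludes the configuration in which $T_m\cap V$ consists of branches coming in from outside $V$ and creeping ever closer to $I'$ as $m$ grows. For each fixed $m$ this picture is compatible with $Z$ being dense and with every component of $Z-T_m$ having diameter $<1/m$. What rules it out is the absence of loops in $Z$. That property must be applied to a limiting object which is invisible in every finite tree $T_m$, so the tree-ness of each $T_m$ does not help.

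This is how the paper argues. Take $p_i\in Z$ converging to an interior point $p$ of $I$ from the right. Under the no-branching hypothesis, the paths in $Z$ from $p$ to $p_i$ must exit the half-disk $V$ through its semicircular boundary, so their diameters are bounded below. Lemma~\ref{lemma:limit_rays} then gives, after passing to a subsequence, a nontrivial ray $\sigma$ from $p$ that lands at $p$ with its tail in $V$. That lemma uses short hair at all scales simultaneously, through the convergence of $\eta_i\cap T_n$ for every $n$. Now $\sigma\cup\{p\}$ is an embedded loop in $Z$, contradicting uniqueness of paths. Your argument needs some such compactness step across all scales. Density alone only supplies the points $p_i$; it cannot produce the contradiction.
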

\begin{proof}
Suppose not, so that there is some interval $I\subset Z$ with no branching on the right
for some choice of orientation. Let $p_i \in Z$ be a sequence of points converging
to the midpoint $p$ of $I$ from the right. Let $U$ be an open disk in $S^2$ centered
at $p$ and bisected (topologically) by $I$ 
(i.e.\/ intersecting $I$ in a single proper subinterval),
and let $V$ be the open half-disk component of $U$ in $S^2-I$ on the right of $I$.
Then $p_i$ is contained in $V$ for all sufficiently large $i$.

By Lemma~\ref{lemma:limit_rays} there is a
proper ray $\sigma$ in $Z$ starting at $p$ which is a limit of the unique embedded
paths from $p$ to $p_i$. Let $T_n$ be as in Lemma~\ref{lemma:increasing_union}. For fixed $n$ and big
$i$ the endpoint of $\sigma_n:=\sigma \cap T_n$ 
is joined by a path in $Z$ of diameter at most $1/n$ to
$p_i$ and is therefore (because $I$ has no branching on the right)
not on $I$, but lies in $V$. In particular the image of
$\sigma$ is a nontrivial closed embedded loop in $Z$, which is a contradiction.  
\end{proof}

\subsection{Universal circle}
\label{sec-universal-circle}

In this subsection we show that associated to any hairy half-zipper $Z$ is a
canonical circle $S^1_Z$ which completes a natural circular order on the set
of (topological) ends of $Z$.

We follow the construction and the notation in \cite{Catherine_wheels}, \S~3 
and \cite{Zippers}, \S~2.5 closely, and refer to those papers for details
and proofs.

Let $\EE$ denote the set of equivalence
classes of proper (unparameterized) rays, where two proper rays are 
equivalent if they agree outside a compact subset. We remark that as a set, this is
the same as the set of topological ends of $Z$ in its path-topology in the
sense of Freudenthal \cite{Freudenthal}, i.e.\/ the inverse limit of $\pi_0(Z-T_n)$ for any
cofinal sequence of compact path-connected subsets, for instance the set
of trees $T_n$ as in Lemma~\ref{lemma:limit_rays}; however we will shortly give
it a different topology than Freudenthal's end topology.

\begin{definition}[Convex hull]\label{definition:convex_hull}
Let $S$ be a subset of $Z\sqcup \EE$. If $S$ is empty or consists of a
single point of $\EE$, the convex hull of $S$ is empty. Otherwise the
convex hull is the smallest path-connected subset of $Z$ containing $S\cap Z$
and containing rays representing every element of $S\cap \EE$.
\end{definition}

The planar embedding and the orientation of $S^2$ defines a circular order on $\EE$,
by \cite{Zippers} Lemma~2.13.
We topologize $\EE$ with the order topology (this is quite different from the
Freudenthal topology). Since $Z$ is a countable union of finite
trees, $\EE$ is separable. Let $\overline{\EE}$ define the order completion. This is
circularly ordered, and is compact and separable in the order topology. The points
of $\overline{\EE} -\EE$ are represented by geometric objects called {\em ideal gaps}.
These are defined in \cite{Zippers}, \S~2.5 and we repeat the definition here. 

If $e^L,e^R$ are distinct elements of $\EE$ we denote by $[e^L,e^R]$ 
the set of ends $e\in \EE$ for which
$e^L,e,e^R$ is positively (i.e.\/ anticlockwise) circularly ordered, together with
$e^L, e^R$ themselves.

\begin{lemma}[Nonempty intersection]\label{lemma:nonempty_intersection}
Suppose there is an infinite nested sequence
$$\cdots [e_{i+1}^L,e_{i+1}^R] \subset [e_i^L,e_i^R] \subset\cdots$$
whose intersection is empty. For each $i$ let $\eta_i \subset Z$ denote the
convex hull of $e_i^L$ and $e_i^R$ (this is a properly embedded line), oriented
to run from $e_i^L$ to $e_i^R$. Then there is some $i$ so that the intersection
$X: = \cap_{j\ge i}\eta_j$ is either a point, or a compact, nonempty interval.
\end{lemma}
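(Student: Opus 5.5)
We work throughout in the tree $Z$. For each $i$, the line $\eta_i$ is the convex hull of $e_i^L$ and $e_i^R$, a properly embedded line in $Z$ running from $e_i^L$ to $e_i^R$. Ends in $[e_i^L,e_i^R]$ are exactly the ends whose rays leave $\eta_i$ on one fixed side (the left, say, of the oriented line), together with $e_i^L,e_i^R$; we take this from the description of the circular order in \cite{Zippers}.

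The first step is to compare consecutive lines. We show that $[e_{i+1}^L,e_{i+1}^R]\subset[e_i^L,e_i^R]$ forces $\eta_{i+1}$ to lie in the closed region on the left of $\eta_i$. Consequently $\eta_{i+1}\cap\eta_i$ is connected: it is the intersection of two geodesic lines in a tree, so it is a (possibly empty, possibly degenerate or unbounded) subpath. This gives a nested structure. For $j\ge i$, the sets $X_i^j=\eta_i\cap\cdots\cap\eta_j$ are decreasing connected subsets of $\eta_i$.

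The second step is to exclude the bad cases for $X=\bigcap_{j\ge i}\eta_j$.

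\textbf{Case: an unbounded end survives.} Suppose some ray end of $\eta_i$ is contained in every $\eta_j$ for $j\ge i$. Then that end $e$ lies in every $[e_j^L,e_j^R]$, since it is an end of each $\eta_j$ and the intervals are closed. This contradicts the assumption that the intersection is empty. Hence, for each $i$, the intersection $X$ contains no unbounded ray. Since $X$ is connected and closed in $\eta_i$, it is a point, a compact interval, or empty.

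\textbf{Case: $X$ empty for every $i$.} We must show that $X$ is nonempty for some $i$. Suppose instead that for every $i$ the lines eventually become disjoint from $\eta_i$. Using the nesting, we build a sequence of points $p_i$: when $\eta_{i+1}$ is disjoint from $\eta_i$, the line $\eta_{i+1}$ lies in a single complementary component of $Z-\eta_i$ on the left, attached to $\eta_i$ at a point $p_i$, and $p_{i+1}$ lies deeper in that branch. The points $p_i$ then lie along a single embedded path that moves monotonically away from $\eta_1$. Two outcomes are possible.

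If the path is compact, its endpoint $p$ is a limit point of the branch points. Any end in the branches beyond $p$ must then lie in all the intervals, giving a contradiction. If there is no such end, $p$ is contained in all $\eta_j$ from some point on, which contradicts $X=\emptyset$.

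If the path is a proper ray, it defines an end $e$. This end lies in every $[e_j^L,e_j^R]$ because it leaves every $\eta_j$ on the correct side. This again contradicts the empty intersection.

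We expect this dichotomy to be the main obstacle: arguing cleanly that the branch points accumulate along a single path, and that each possibility produces either an end in the intersection or a common point. If this case becomes messy, we would fall back on compactness of $\overline{\EE}$.
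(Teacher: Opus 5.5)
Your overall structure is reasonable. You rule out a surviving end, then rule out $X=\emptyset$ for all $i$ by following the attaching points $p_m$ of a chain of pairwise disjoint lines $\eta_{i_0},\eta_{i_1},\dots$, and your treatment of the case where the $p_m$ run off along a proper ray is correct.

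The gap is in the case where the arc through the $p_m$ converges to a point $p\in Z$. Your fallback claim is that if there is no end beyond $p$, then $p$ lies in all $\eta_j$ from some point on. This is false. The point $p$ lies in the component $A_m$ of $Z-\eta_{i_m}$ attached at $p_m$: the arc from $p_{m+1}$ to $p$ avoids $p_m$, which is the only point through which it could reach $\eta_{i_m}$. So $p$ lies on none of the $\eta_{i_m}$.

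More fundamentally, your argument never uses the hypothesis that every point of $Z$ is a cut point, and without it the lemma fails. In the plane, take the segment from $(0,-1)$ to $(0,0)$ together with the horizontal lines $L_m=\{y=-2^{-m}\}$, $m\ge 0$. This is a planar tree whose only non-cut point is the leaf $(0,0)$. Orient each $L_m$ so that the ends of the $L_{m'}$ with $m'>m$ lie in its interval. This gives nested intervals with empty intersection whose convex hulls are the $L_m$, yet $\bigcap_{m\ge i}L_m=\emptyset$ for every $i$. This is exactly your compact case, with no end beyond $p=(0,0)$.

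The missing idea is to invoke the half-zipper axiom at the limit point. Since $p$ is a cut point, $Z-p$ has a component $C$ not containing the arc through the $p_m$. Now $C\cup\{p\}$ meets the arc from $p_m$ to $p$ only at $p$, and that arc meets $\eta_{i_m}$ only at $p_m$. So a path from $p$ through $C$ to $\eta_{i_m}$ would close up with a segment of $\eta_{i_m}$ and that arc into an embedded loop. Hence $C$ meets no $\eta_{i_m}$, and $C\subset A_m$ for every $m$.

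Every component of $Z-p$ contains proper rays, as the paper notes for half-zippers with short hair. The end of such a ray in $C$ lies on the positive side of every $\eta_{i_m}$, hence in every $[e_j^L,e_j^R]$, contradicting the empty intersection. So the ``no end beyond $p$'' subcase is vacuous, but only because of this axiom.

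A smaller point: passing from ``$X=\emptyset$ for every $i$'' to ``some later line is disjoint from $\eta_i$'' needs justification. It follows from two facts. First, $\eta_i\cap\eta_j$ is compact for large $j$, because the ends $e_i^L,e_i^R$ eventually leave the intervals. Second, $\eta_i\cap\eta_b\subset\eta_a$ for $i\le a\le b$. For the record, the paper does not prove this lemma itself; it quotes it from \cite{Zippers}, Lemma~2.15.
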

This is \cite{Zippers}, Lemma~2.15.

\begin{definition}[Ideal gap]
A sequence of proper lines $\eta_i$ in $Z$ which are the convex hulls of 
$\lbrace e_i^L,e_i^R\rbrace$ for a sequence of nested intervals 
$[e_i^L,e_i^R]$ in $\EE$ is said to be {\em end nested}.

A point $p\in Z$ together with an end nested sequence $\eta_i$ as above for which
$\bigcap_i [e_i^L,e_i^R]$ is empty and for which $p \in \bigcap_i \eta_i$ is
called an {\em ideal gap}. We define an equivalence relation on ideal
gaps as follows. Let $S$ and $S'$ be a pair of sequences of nested intervals
in $\EE$ with empty intersection. We say $S$ refines $S'$ if $S'$ is a subsequence
of $S$. The relation of refinement is not an equivalence relation,
but it generates an equivalence relation on sequences of nested intervals
in $\EE$ with empty intersection by
$S \sim S'$ if there is a finite sequence $S=S_0,S_1,\cdots,S_n=S'$ 
so that for each $i$ either $S_i$ refines $S_{i+1}$ or $S_{i+1}$ refines $S_i$.
Say that two ideal gaps are equivalent if their associated
sequences of nested intervals are related by this equivalence relation.

A point $p$ as above is said to lie in the {\em support} of the 
equivalence class of the given ideal gap.
\end{definition}

The concept of an equivalence class of ideal gap is very similar to the concept of a prime end (see, 
e.g.,~\cite[Section 2]{pom-book} for the definition of a prime end); morally
speaking an ideal gap would be a prime end associated to the ``complementary region''
$S^2 - Z$ if this were an open disk. One could make this analogy more precise by
working directly with prime ends of the domains $S^2 - T_n$ and passing to a limit,
but there seems to be no particular advantage to this. 

\begin{lemma}[Order completion]\label{lemma:order_completion}
The union of $\EE$ together with the equivalence classes of ideal gaps of $Z$ admit
a natural circular ordering which is equal to the order completion $\overline{\EE}$.
\end{lemma}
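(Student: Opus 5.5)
The plan is to realize $\overline{\EE}$ concretely as the set $\EE \sqcup \mathcal G$, where $\mathcal G$ is the set of equivalence classes of ideal gaps. We then check that the circular order on $\EE$ (from \cite{Zippers} Lemma~2.13) extends to this union and that the result satisfies the universal property of the order completion. Recall that the order completion of a circularly ordered set is obtained by adjoining one point for each ``cut'' that is not realized. Here a cut is the limit of a nested sequence of closed intervals $[e_i^L,e_i^R]$ whose intersection is empty, modulo the equivalence generated by refinement (equivalently, modulo having the same set of eventually-containing intervals). So the proof reduces to three steps: an identification of cuts with ideal gap classes, a definition of the order, and a density and completeness check.

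\textbf{Step 1: cuts correspond bijectively to ideal gap classes.} Start from a nested sequence $[e_i^L,e_i^R]$ with empty intersection. By Lemma~\ref{lemma:nonempty_intersection}, $X=\bigcap_{j\ge i}\eta_j$ is nonempty for some $i$. Any $p\in X$ then gives an ideal gap, and the map from sequences to gap classes is surjective by definition. Equivalent sequences give equivalent gaps, again by definition, so this is a well-defined bijection between classes of nested sequences with empty intersection and classes of ideal gaps. Finally, two such sequences define the same cut of $\EE$ exactly when they are related by the refinement equivalence. One direction is immediate. For the other, if two sequences are interleaved (each interval of one eventually contains intervals of the other), take a common refinement by alternating terms; this connects them by refinements.

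\textbf{Step 2: define the circular order on $\EE\sqcup\mathcal G$.} For a gap class $g$ with representative intervals $[e_i^L,e_i^R]$, declare that an end $e$ lies on the ``outside'' of $g$ once $e\notin[e_i^L,e_i^R]$ for large $i$. Every $e$ eventually does so, since the intersection is empty. The circular order of a triple $(x,y,z)$ is then read off from the order of triples of ends obtained by replacing each gap by $e_i^L$ (or $e_i^R$) for large $i$. This is stable in $i$ because the intervals are nested and eventually exclude any finitely many given ends. It is also stable for distinct gaps $g\ne g'$, since their interval sequences eventually become disjoint; if they did not, they would be interleaved and hence equivalent by Step 1. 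The circular order axioms are then inherited from $\EE$.

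\textbf{Step 3: density and completeness.} $\EE$ is dense in the union because each gap lies in the interior of intervals $[e_i^L,e_i^R]$ containing ends arbitrarily close to it. Completeness follows from Step 1: every nested sequence with empty intersection in $\EE$ now has a limit point in $\mathcal G$. Here we use separability of $\EE$ to reduce arbitrary cuts to sequential ones. The union is therefore the order completion $\overline{\EE}$. Naturality holds because every construction depends only on $Z$ and the orientation of $S^2$.

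\textbf{Main obstacle.} The delicate point is the claim in Step 1 that interleaved sequences are related by refinement, and correspondingly in Step 2 that inequivalent gaps have eventually disjoint intervals. I would handle this by interleaving the two sequences and passing to subsequences, using only nestedness. The geometric content, namely nonemptiness of the support, is supplied by Lemma~\ref{lemma:nonempty_intersection}.
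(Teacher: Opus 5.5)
Your argument is correct. It does something the paper does not: the paper gives no proof of this lemma and simply imports it as \cite{Catherine_wheels}, Lemma~3.8. Your route makes explicit that the only geometric input is Lemma~\ref{lemma:nonempty_intersection}, which guarantees that every unrealized cut of $\EE$ has a support point and so is represented by an ideal gap. Everything else is circular-order theory, together with separability of $\EE$, which you use to get countable cofinality of cuts. The citation buys brevity at the cost of hiding this division of labor.

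Three points should be tightened.
\begin{enumerate}
\item In Step 1, Lemma~\ref{lemma:nonempty_intersection} only gives $p\in\bigcap_{j\ge i}\eta_j$. The ideal gap is therefore built from the tail sequence, which is refinement-equivalent to the original.
\item The alternating sequence $S''$ is not literally a common refinement of $S$ and $S'$. Rather, $S$ and $S''$ both refine the subsequence of $S$ that you used, and likewise for $S'$; this gives the required chain of refinements.
\item The claim in Step 2 that inequivalent gaps have eventually disjoint intervals does not follow from nestedness alone; you need both intersections to be empty. Suppose $I_i\cap I'_j\neq\emptyset$ for all $i,j$, but no $I'_j$ lies inside some fixed $I_i=[a_i,b_i]$. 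Then each $I'_j$ meets both $I_i$ and its complement. Being a circular interval, $I'_j$ must then contain $a_i$ or $b_i$. One of these two points lies in infinitely many $I'_j$, hence by nestedness in all of them, contradicting $\bigcap_j I'_j=\emptyset$. So the two sequences are interleaved, and hence equivalent by your Step 1.
\end{enumerate}
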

This is \cite{Catherine_wheels}, Lemma~3.8.

\begin{lemma}[Ideal gaps are points]\label{lemma:single_point}
Suppose $Z$ is hairy. Then the support of each equivalence class of ideal gap is
a single point.
\end{lemma}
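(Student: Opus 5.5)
Fix an equivalence class of ideal gaps, represented by an end nested sequence $\eta_i$ (the convex hulls of $e_i^L,e_i^R$) with $\bigcap_i [e_i^L,e_i^R]=\emptyset$. By Lemma~\ref{lemma:nonempty_intersection}, after discarding finitely many terms, $X=\bigcap_{j\ge i}\eta_j$ is either a single point or a compact nondegenerate interval. The plan is to show three things: the support of the class is exactly $X$ for a suitable representative; equivalent sequences give the same $X$; and hairiness rules out the interval case.

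\textbf{Step 1 (support versus $X$).} First I will check that the support is contained in the intersection of the lines along any representative sequence, and that this intersection does not change under refinement. If $S$ refines $S'$, then $S'$ is a subsequence of the nested family $S$. Since the intervals $[e_i^L,e_i^R]$ are nested, the convex hulls $\eta_i$ of their endpoints satisfy a monotonicity property. Concretely, $\eta_{i+1}$ is contained in the convex hull of $\eta_i$ together with the subtree spanned by the ends in $[e_i^L,e_i^R]$. Hence the tail intersection over a subsequence equals the tail intersection over the whole sequence. Chaining refinements then shows that the set $X$ is an invariant of the equivalence class. It follows that the support of the class equals $X$, so it suffices to prove that $X$ is a point.

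\textbf{Step 2 (excluding an interval).} Suppose $X=I$ is a nondegenerate compact interval, oriented compatibly with the $\eta_j$ (from the $e^L$ side to the $e^R$ side). Every $\eta_j$ with $j\ge i$ contains $I$. The ends in $[e_j^L,e_j^R]$ are reached from $\eta_j$ by branching off on one fixed side, say the right. Apply hairiness (Definition~\ref{definition:hairy}) to $I$: there is a nontrivial path $\alpha$ starting at an interior point $x\in I$ and leaving $I$ on that side. Extend $\alpha$ to a proper ray, which is possible since every point is a cut point and there are no loops. This ray represents an end $e$.

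The key claim is that $e\in[e_j^L,e_j^R]$ for every $j$. Indeed, the line $\eta_j$ contains $I$, and the ray branches off $\eta_j$ at $x$ into the side where the ends of $[e_j^L,e_j^R]$ lie, using the circular-order description from \cite{Zippers} Lemma~2.13. Then $e$ belongs to $\bigcap_j[e_j^L,e_j^R]$, contradicting emptiness. So $X$ is a single point.

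\textbf{Main obstacle.} The delicate point is the side bookkeeping in Step 2. I must verify that the ends in $[e_j^L,e_j^R]$ correspond exactly to rays branching off $\eta_j$ on one fixed side, the same side for all $j$. I must also make sure the branch point $x$ can be taken in the interior of $I$ and not at an endpoint, where $\eta_j$ might diverge from $I$.

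For the fixed side, I would use nestedness: the region bounded by $\eta_{j+1}$ on the relevant side is contained in that of $\eta_j$, so the orientation convention persists. For the branch point, I would apply hairiness to a proper subinterval of the interior of $I$. This guarantees $x\in\operatorname{int} I\subset\eta_j$ with $\alpha$ meeting $\eta_j$ only near $x$ for all $j$, after shrinking $\alpha$ so that it avoids the other arcs of $\eta_j$.
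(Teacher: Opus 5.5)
Your argument is correct and is essentially the argument behind \cite{Catherine_wheels}, Lemma~3.11, which the paper cites rather than reproves: if the support contained a nondegenerate arc, hairiness would give a branch leaving an interior point of that arc on the positive side, and the end of that branch would lie in every $[e_j^L,e_j^R]$, contradicting empty intersection. The one step where your stated reason does not support the conclusion is Step~1, since the containment $\eta_{i+1}\subset Z_i$ (with $Z_i$ the subtree spanned by the ends in $[e_i^L,e_i^R]$) does not by itself make tail intersections invariant under passing to subsequences; what you need is $\eta_i\cap Z_j\subset\eta_j$ for $i<j$, hence $\eta_i\cap\eta_k\subset\eta_j$ for $i<j<k$, and this holds because a point of $\eta_i$ on a positive-side branch of $\eta_j$ would force an end of $\eta_i$ into the open interval $(e_j^L,e_j^R)$, which is impossible when $[e_j^L,e_j^R]\subset[e_i^L,e_i^R]$ and $(e_i^R,e_i^L)\neq\emptyset$ (the latter again by hairiness).
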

This is \cite{Catherine_wheels}, Lemma~3.11.

\begin{lemma}[Order completions are circles]\label{lemma:order_completion_circle}
Suppose $Z$ is hairy. Then the order completion $\overline{\EE}$ is homeomorphic to
a circle $S^1_Z$.
\end{lemma}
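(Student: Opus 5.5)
We will use the standard characterization of the circle among circularly ordered sets. A circularly ordered set, equipped with its order topology, is homeomorphic to $S^1$ provided three things hold: it is order complete, it is separable (it contains a countable dense subset), and it is densely ordered with no gaps. Dense ordering means that between any two distinct points there is a third. Granting this, the proof reduces to checking these three properties for $\overline{\EE}$, where $\overline{\EE}$ is identified with $\EE$ together with the classes of ideal gaps, as in Lemma~\ref{lemma:order_completion}.

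Completeness holds by construction, since $\overline{\EE}$ is the order completion of $\EE$. Separability comes from $\EE$. Each $T_n$ is a finite tree, so only countably many branch points and edges appear. Choose finitely many representative ends for each complementary component of $T_n$; the union of these choices over all $n$ is a countable subset of $\EE$. We check that it is order-dense in $\EE$, hence also in $\overline{\EE}$. Take an interval $(e,e')$ in $\EE$. It corresponds to a sub-branching of the convex hull of $e$ and $e'$, and that sub-branching eventually meets some $T_n$, so it contains one of the chosen ends.

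The main step is density of the order, and this is where hairiness enters. We must rule out pairs of points of $\overline{\EE}$ that are adjacent, with nothing between them. Points of $\overline{\EE}-\EE$ are limits of nested intervals with empty intersection, so any jump must involve endpoints that come from $\EE$ or from ideal gaps. We treat the cases as follows.

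\begin{enumerate}
\item Suppose $e,e'\in\EE$ are distinct. Let $\eta$ be the properly embedded line forming their convex hull, oriented from $e$ to $e'$. By hairiness (Lemma~\ref{lemma:short_hair_hairy}), some compact subinterval of $\eta$ has a branch $\alpha$ leaving it on the side that corresponds to the interval $(e,e')$. We extend $\alpha$ to a proper ray; this is possible because every point is a cut point, and by Lemma~\ref{lemma:increasing_union} we can continue the branch indefinitely. The end $e''$ of this ray lies strictly between $e$ and $e'$.
\item Suppose one or both endpoints are ideal gaps. We approximate the ideal gap by the nested intervals $[e_i^L,e_i^R]$ that define it. If nothing lay between it and its neighbour, then the lines $\eta_i$ would have to stabilize in a way that has no branching on one side. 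Lemma~\ref{lemma:nonempty_intersection} shows that the lines $\eta_i$ eventually share a compact segment or a single point $X$. By Lemma~\ref{lemma:single_point}, $X$ is a point $p$. Hairiness applied to a small segment through $p$ on the relevant side then produces a branch whose ends lie strictly between the two given points, which is a contradiction.
\end{enumerate}

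The second case is the step we expect to be the main obstacle, because of the bookkeeping with equivalence classes of ideal gaps. We will handle it by picking a representative nested sequence and arguing via Lemma~\ref{lemma:single_point}.

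Finally, $\overline{\EE}$ is nonempty and has more than one point, since $Z$ is hairy and therefore has infinitely many ends. Combining completeness, separability and density, the characterization gives $\overline{\EE}\cong S^1_Z$.
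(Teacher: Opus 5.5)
Your overall strategy is the standard one and matches what the paper relies on. The paper gives no argument beyond citing \cite{Catherine_wheels}, Lemma~3.12, having already noted that $\overline{\EE}$ is compact and separable because $Z=\bigcup_n T_n$. So the real content is that hairiness rules out jumps, which is exactly your case (1). Two steps need repair, though neither changes the approach.

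\textbf{Case (2) is unnecessary.} You flag it as the main obstacle, but as written it is not an argument. It is unclear what it means for the $\eta_i$ to ``stabilize with no branching on one side'', which segment through $p$ and which side you intend, or why the resulting ends separate the two given points. The fact you need is purely order-theoretic. After cutting the circular order open at an end, a point $g\in\overline{\EE}-\EE$ is a cut $(A,B)$ of $\EE$ in which $B$ has no least element. So for any $h$ after $g$ there is $x\in B$ with $x\le h$, and then $x'\in B$ with $g<x'<x\le h$. Added points are therefore never endpoints of a jump, and density of $\overline{\EE}$ reduces to density of $\EE$, i.e.\ to your case (1).

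\textbf{Extending the branch to a proper ray.} In case (1), and implicitly in your separability and nonemptiness claims, ``continue the branch indefinitely'' does not by itself give a \emph{proper} ray: successive extensions can converge to a point of $Z$. Use the exhaustion of Lemma~\ref{lemma:increasing_union} explicitly.
\begin{itemize}
\item Let $q\in\eta$ be the base of $\alpha$ and $C$ the component of $Z-q$ containing $\alpha-q$. For large $n$, a leaf $\ell\ne q$ of the finite tree $T_n\cap(C\cup\{q\})$ is a cut point of $Z$. A component of $Z-\ell$ missing $q$ is then a component of $Z-T_n$ lying in $C$.
\item Iterating gives nested components $W_n\supset W_{n+1}\supset\cdots$ of $Z-T_n$, $Z-T_{n+1}$, and so on.
\item The paths from $q$ to the attaching points of the $W_n$ increase to a ray; the attaching points advance infinitely often since $\bigcup_m T_m=Z$. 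This ray is eventually disjoint from every $T_m$, hence proper, and its end lies in $(e,e')$.
\item The same leaf argument shows every component of $Z-T_n$ contains an end. There are only countably many such components, since each meets some $T_m$ and $T_m-T_n$ has finitely many components. This is what your countable dense set actually needs.
\end{itemize}

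Finally, hairiness is the hypothesis here, so there is no need to cite Lemma~\ref{lemma:short_hair_hairy}. What you do use beyond hairiness is the exhaustion by the $T_n$, which the paper also assumes implicitly in this subsection.
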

This is \cite{Catherine_wheels}, Lemma~3.12.

\medskip

Since every point of $Z$ is a cut point, every point $p\in Z$
is in the support of an ideal gap. Although it is not logically necessary, we
explain this briefly for the benefit of the reader. For any $p$ the
complement $Z - p$ has at least two components, and each of these components
contains many proper rays. If $A$ is a component of $Z - p$ and $\EE_A$ is the
set of ends corresponding to proper rays in $A$, then $\EE_A$ is dense in some
interval of $S^1_Z$ and for different components $A,A'$ the subsets
$\EE_A$ and $\EE_{A'}$ are disjoint and pairwise unlinked in $S^1_Z$. Thus 
there is an induced circular order on the set of components of $Z-p$. An ideal
gap supported at $p$ is a (Dedekind) cut in this circularly ordered set. Since
there are at least two components of $Z-p$, there are at least two cuts in the
circularly ordered set, and therefore $p$ is contained in the support of at least two
distinct ideal gaps (and, by Lemma~\ref{lemma:single_point}, it is {\em equal} to
the support of at least two distinct ideal gaps). 

Let's introduce some terminology.
If $\eta$ is an oriented path in $Z$ and $p\in \eta$, let $A^+$, resp.\ $A^-$, 
be the components of $Z-p$ containing the part of $\eta$ that is ahead
of,  resp.\ behind, $p$ in the orientation. Say that an ideal gap $e$ 
supported at $p$ lies {\em on the positive side} of $\eta$
(equivalently the right hand side) if the corresponding cut of the components
of $Z-p$ lies in the (oriented) interval $[A-,A+]$ in the circular order on
components of $Z-p$.

\subsection{CaTherine wheels}\label{subsection:catherine_wheels}

\begin{comment}

\begin{definition}[CaTherine wheel]\label{definition:wheel}
A {\em CaTherine wheel} is a continuous surjective map $f:S^1 \to S^2$ such that
for every closed interval $J\subset S^1$ the image $f(J)$ is homeomorphic to a closed
disk $D^2$, and $f(\partial J)\subset \partial f(J)$.
\end{definition} 

\end{comment}

%The proof occupies the remainder of this subsection.
By Lemma~\ref{lemma:short_hair_hairy} $Z$ is hairy, and by
Lemma~\ref{lemma:order_completion_circle} the order completion $\overline{\EE}$
is homeomorphic to a circle $S^1_Z$. By Lemma~\ref{lemma:order_completion}
the points of $S^1_Z$ correspond to ends of $Z$, and to ideal gaps. Define
$f:S^1_Z \to S^2$ as follows: if $e\in S^1_Z$ is an end, define $f(e)$ to be 
the landing point of any ray representing $e$; such a landing point exists, because
$Z$ has the strong landing property, by Lemma~\ref{lemma:limit_rays}. 
If $e\in S^1_Z$ is an ideal gap, define $f(e)$ to be the support of $e$. 
This is a single point by Lemma~\ref{lemma:single_point}. Thus $f$ is well-defined, and,
by the strong landing property, it is surjective.
We claim that $f$ is a CaTherine wheel. This is established in a sequence of steps.

\begin{lemma}[Continuous]\label{lemma:f_continuous}
The map $f:S^1_Z \to S^2$ is continuous.
\end{lemma}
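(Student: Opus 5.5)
We show that for every $x\in S^1_Z$ and every $\epsilon>0$ there is an open interval $J\ni x$ in $S^1_Z$ with $f(J)$ of diameter $O(\epsilon)$. The tool is the short hair property: fix $n$ with $1/n<\epsilon$ and the finite tree $T_n$ of Lemma~\ref{lemma:increasing_union}, so that every component of $Z-T_n$ has diameter $<1/n$.

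\textbf{Partition of the circle.} Each end $e\in\EE$ is represented by a proper ray, and this ray eventually leaves $T_n$. Its tail therefore lies in a unique component $C$ of $Z-T_n$. Each such $C$ is attached to $T_n$ at a single point $v_C$; if it were attached at two points, $Z$ would contain a loop, since $Z$ has unique paths.

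Let $\EE_C\subset\EE$ be the set of ends whose tails lie in $C$. The convex hulls used to define the circular order are planar, and $T_n$ is a tamely embedded finite tree. Hence $\EE_C$ is an interval in the circular order on $\EE$, and distinct components give disjoint, unlinked intervals. Let $I_C$ be the closure of $\EE_C$ in $S^1_Z$. The points of $S^1_Z$ not in the interior of any $I_C$ are ideal gaps supported on $T_n$. These correspond to the finitely many ``sides'' of $T_n$, or to accumulations of the intervals $I_C$ at a branch point.

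\textbf{Diameter bound on each $I_C$.} For an end $e\in\EE_C$, the landing point $f(e)$ lies in $\overline{C}$, because the tail of the ray lies in $C$. For an ideal gap $g$ in the interior of $I_C$, choose nested intervals $[e_i^L,e_i^R]$ shrinking to $g$ with endpoints in $\EE_C$. The lines $\eta_i$ joining $e_i^L$ and $e_i^R$ then lie in $C\cup\{v_C\}$. Therefore the support point $f(g)\in\bigcap\eta_i$ lies in $\overline{C}$. It follows that $f(I_C)\subset\overline{C}$, so $f(I_C)$ has diameter at most $1/n$.

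\textbf{Continuity at each $x$.} If $x$ is in the interior of some $I_C$, we are done. Otherwise $x$ is an ideal gap supported at a point $p\in T_n$. Near $x$ on either side, $S^1_Z$ is covered by arcs $I_C$ whose components $C$ are attached either at $p$ or at points of $T_n$ close to $p$. To make this precise, choose a small arc of $T_n$ around $p$ and use that every $I_C$ contained in a small neighborhood of $x$ corresponds to a $C$ attached along the side of $T_n$ at $p$ that is determined by $x$. Points of $f(J)$ then lie within $1/n$ of $T_n\cap B_\delta(p)$, which gives diameter $O(\epsilon+\delta)$.

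\textbf{Main obstacle.} The hardest step is this last one, where $x$ is an ideal gap on $T_n$. One must check that a neighborhood $J$ of $x$ only sees components $C$ attached near $p$ on the correct side. Here the finiteness of $T_n$ is essential, since $p$ has finitely many directions and sides. I would also refine $n$ using Lemma~\ref{lemma:single_point}: the lines $\eta_i$ defining $x$ converge to the single point $p$. So for large $i$ the subtree they cut off, together with its hair, lies in a small ball around $p$, and the interval $[e_i^L,e_i^R]$ serves as $J$.
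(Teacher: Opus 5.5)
Your argument is essentially the paper's. Ends, and in your packaging via the arcs $I_C$ also gaps supported off $T_n$, are controlled by the fact that everything hanging off $T_n$ has diameter $<1/n$. A gap supported at $p\in T_n$ is handled through its defining nested intervals $[e_i^L,e_i^R]$, using Lemma~\ref{lemma:single_point} and finiteness of $T_n$ to get $\operatorname{diam}(\eta_i\cap T_n)\to 0$. Strictly, the $\eta_i$ themselves do not converge to $p$: only their intersections with $T_n$ shrink, and the rest of $\eta_i$ lies within $1/n$ of that intersection.

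The step you assert without proof, which the paper's write-up also leaves implicit, is that the whole region cut off on the positive side of $\eta_i$, not just $\eta_i$, meets $T_n$ in a small set. This follows by contradiction. If not, nestedness puts a fixed arc $[p,q]\subset T_n$ in every cut-off region. An interior point $r$ of that arc is off $\eta_i$ for large $i$, again by Lemma~\ref{lemma:single_point}. Hairiness then gives an end branching off at $r$ that lies in every $[e_i^L,e_i^R]$, contradicting $\bigcap_i[e_i^L,e_i^R]=\emptyset$.
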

\begin{proof}
Let $n\in\mathbb N$ and let $T_n$ be as in Lemma~\ref{lemma:increasing_union}.
If $e\in S^1_Z$ is an end, and $\sigma$ is a ray
representing $e$, there is a point $p$ on $\sigma$ in the complement of $T_n$
for any large $n$. The component of $Z-p$ containing $e$ therefore has diameter
$<1/n$, and pairs of rays in this end branching from either side of $\sigma$ give
intervals $[e_i^L,e_i^R]$ nesting down to $e$. In particular, if $U$ is
any open neighborhood of $f(e)$, then $f^{-1}(U)$ contains the entire open 
interval $(e_i^L,e_i^R)$ about $e$ for all sufficiently large $i$.
If $e\in S^1_Z$ is an ideal gap with support $p$, we may similarly find a 
nested sequence of intervals $[e_i^L,e_i^R]$ for which the
convex hulls $\eta_i$ of $\lbrace e_i^L,e_i^R\rbrace$ have intersection $p$.
Again, fix a large $n$ and consider the intersections of $\eta_i$ with $T_n$.
For sufficiently large $i$ these intersections must have arbitrarily small diameter, or else
$\bigcap_i \eta_i$ would contain an interval in $T_n$. But the diameter of
each component of $\eta_i - T_n$ is less than $1/n$. It follows once more
that if $U$ is any open neighborhood of $f(e)$, then $f^{-1}(U)$ contains
the entire open interval $(e_i^L,e_i^R)$ about $e$ for all sufficiently large $i$. This
proves continuity of $f$.
\end{proof}

To show that $f$ is a CaTherine wheel, it remains to prove the following.

\begin{lemma}[Disks]\label{lemma:disks}
Let $J \subset S^1_Z$ be a closed interval.
Then $f(J)$ is a closed topological disk, and $f(\partial J)$ is contained in 
$\partial f(J)$.
\end{lemma}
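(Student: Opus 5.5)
The plan is to identify $f(J)$ concretely as a region cut out by a line in $Z$, and then verify a topological characterization of closed disks. Write $J=[e^L,e^R]$ with $e^L\neq e^R$; the degenerate case $J=S^1_Z$ is trivial. The endpoints are ends or ideal gaps. Let $\eta\subset Z$ be the convex hull of the two endpoints. If an endpoint is an ideal gap, I use its support point in place of the end, so $\eta$ is a proper line, a ray, or a compact path. I orient $\eta$ from $e^L$ to $e^R$, and I let $Z_J$ be the union of $\eta$ with all components of $Z-\eta$ whose ends lie in $J$; by the discussion of ideal gaps on the positive side, these are exactly the branches attached to $\eta$ on its right side.

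\textbf{Step 1: $f(J)=\overline{Z_J}$.} For $\supseteq$, every point of $Z_J$ is the landing point of an end in $J$, or the support of an ideal gap in $J$ (a cut in the circular order of components of $Z-p$ lying on the positive side). Since $f(J)$ is compact, it contains the closure. For $\subseteq$, approximate elements of $J$ by ends in $J$ and use Lemma~\ref{lemma:limit_rays}: the limiting rays are limits of paths inside $Z_J$, and $\overline{Z_J}$ is closed.

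\textbf{Step 2: $f(J)$ is a Peano continuum with connected complement.} It is the continuous image of an interval, so it is a Peano continuum. The same description applied to the complementary closed interval $J'$ shows that $S^2-f(J)\subset f(J')$. I will argue that $S^2-f(J)$ is connected because it is the union of the open left-side branches of $Z$ off $\eta$, together with their limit points not in $f(J)$. Each such branch is path-connected and accumulates on $\eta$ from the left. Using the short hair property, I can join any two of them by small arcs near $\eta$ that avoid $\overline{Z_J}$; this uses that $Z$ is hairy (Lemma~\ref{lemma:short_hair_hairy}), so there is left branching along every subinterval of $\eta$.

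\textbf{Step 3: $f(J)$ has no cut points.} This is the main obstacle, and it is exactly where Remark~\ref{remark-weaker} and the forthcoming Lemma~\ref{lemma:no_local_cut_points} enter. Suppose $x$ cut $f(J)$ into two pieces. Then the interval $J$ would split into two subintervals whose images meet only at $x$. Consider two ends in $J$, one in each subinterval, landing near $x$ from the two sides. By Lemma~\ref{lemma:limit_rays}, I would extract rays in $Z_J$ that land at $x$ from both pieces. Their union with the path in $Z$ joining their basepoints would give a nontrivial closed loop in $S^2$ built from $Z$ and landing rays. If $x\in Z$ this contradicts uniqueness of paths in $Z$. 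If $x\notin Z$, the loop separates $S^2$. Hairiness then forces branches of $Z$ on both sides, hence ends of $J'$ on both sides. This contradicts the fact that ends of $J'$ form a single interval, so that $f(J')$ lies on one side. I expect the care here to lie in showing that the landing rays really approach from opposite sides.

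\textbf{Step 4: conclude.} By Torhorst's theorem, since $U=S^2-f(J)$ is connected and $f(J)$ is a Peano continuum without cut points, $\partial U$ is a Jordan curve $C$. Moreover $f(J)$ equals the closure of the Jordan domain bounded by $C$ on the side away from $U$. This uses that $f(J)=\overline{Z_J}$ has connected interior containing the right-side branches, which follows because $Z_J$ is path-connected and dense in $f(J)$. Hence $f(J)$ is a closed disk.

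\textbf{Boundary points.} Finally, $f(e^L)$ is the landing point (or support) of an endpoint of $\eta$. Points of $f(J')$ outside $f(J)$, namely the left branches attached near that end of $\eta$, accumulate at it by hairiness and continuity (Lemma~\ref{lemma:f_continuous}), so $f(e^L)\in\partial f(J)$, and likewise $f(e^R)\in\partial f(J)$.
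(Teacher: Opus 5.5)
Your skeleton matches the paper's: identify $f(J)=\overline{Z_J}$, show the complement is connected, rule out cut points, then apply Torhorst. But Step~3, which you rightly call the main obstacle, does not work.

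The case $x\notin Z$ needs no loop at all. $Z_J$ is path-connected and misses $x$, so it lies in one piece of $f(J)-x$. The other piece is a nonempty relatively open subset of $f(J)$, so it must meet the dense set $Z_J$, a contradiction. Your appeal to hairiness putting ends of $J'$ on both sides of the loop is unjustified in any case, since the branches on either side may have all their ends in $J$.

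The real case is $x\in Z_J$, and there no contradiction arises. A proper ray of $Z$ never lands on $Z$: together with its landing point it would be an embedded path in $Z$, hence the unique compact one. So Lemma~\ref{lemma:limit_rays}, applied to points of each piece converging to $x$, simply returns the paths in $Z_J$ from your basepoints to $x$. Their union is the unique path between the basepoints, which passes through $x$. Uniqueness of paths is not violated, because $x$ is a cut point of the tree $Z_J$. What must actually be shown is that the closures of branches of $Z_J-x$ lying in different pieces meet away from $x$. Also, $J\cap f^{-1}(x)$ need not be a single point, so $J$ does not simply split into two subintervals.

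The missing idea is the one in Lemma~\ref{lemma:no_local_cut_points}. First use the limit-loop trick only to show that each (local) piece contains an arc of $Z_J$ issuing from $x$. This gives an arc $\eta\subset Z_J$ through $x$ whose two halves lie in different pieces. A (local) cut at $x$ then forces points of $Z-Z_J$ to accumulate at $x$ from both sides of $\eta$; otherwise a one-sided half-disk inside $f(J)$ would join the two halves. Now extend $\eta$ to a proper line $\eta'\subset Z_J$ with ends $e^\pm\in J$ and $[e^-,e^+]\subset J$. Every ray landing on $\eta'$ from its positive side lies in $Z_J$, so $Z-Z_J$ can approach $\eta'$ from one side only, which is the contradiction.

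Step~2 has a related hole. Joining two left branches by arcs near $\eta$ that avoid $\overline{Z_J}$ presupposes that $\overline{Z_J}$ does not accumulate on $\eta$ from the left. Hairiness does not give this; it only produces left branches. That fact is Lemma~\ref{lemma:gamma_in_boundary}, which again needs a limit-loop argument. You would also still have to handle branches attached at an ideal-gap endpoint of $\eta$, and the case where $\eta$ is a single point. The paper sidesteps all this by deducing that the complement is connected from the no-local-cut-point lemma together with accessibility of boundary points (Lemma~\ref{lemma:one_complement}).
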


The proof of Lemma~\ref{lemma:disks} requires several lemmas. To start off, if $J$ is a closed interval in $S^1_Z$ then the orientation on $S^1_Z$
determines an orientation on $J$. With respect to this orientation we
denote the endpoints of $J$ as $J^\pm$ so that $J = [J^-,J^+]$.
Now for every such closed interval $J$ in $S^1$ let $Z_J$ 
be the convex hull in $Z$ of the 
corresponding set of ends and supports of ideal gaps in $J$, and let
$\eta_J \subset Z_J$ be the convex hull of the ends and supports of ideal
gaps in $\partial J$, oriented to run from $f(J^-)$ to $f(J^+)$. 

\begin{figure}[htpb]
\labellist
\small\hair 2pt
\pinlabel $J$ at 135 240
\pinlabel $J^+$ at 30 190
\pinlabel $J^-$ at 235 190
\pinlabel $\eta_J$ at 500 80
\pinlabel $Z_J$ at 420 200
\endlabellist
\centering
\includegraphics[scale=0.75]{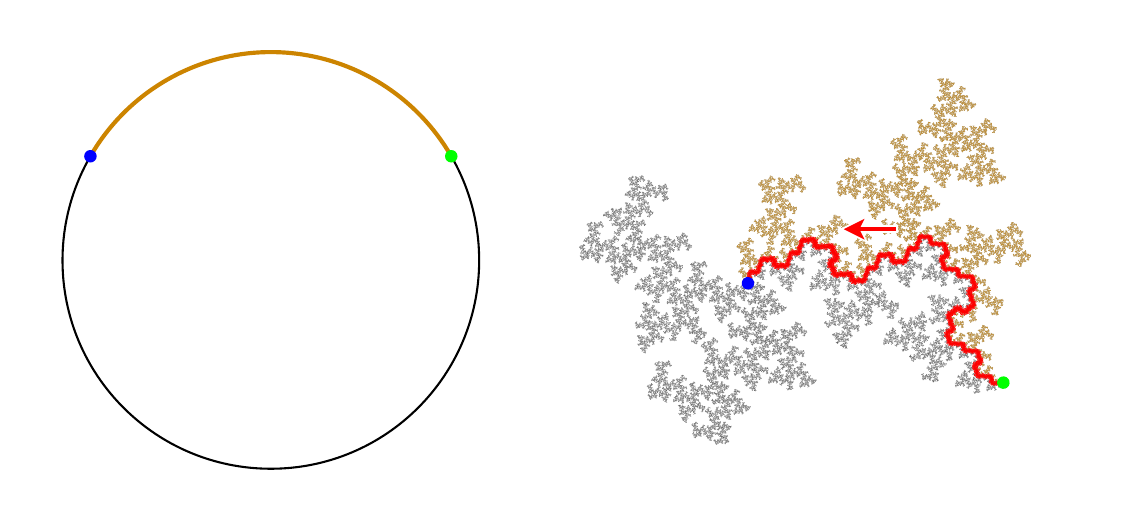}
\caption{\label{fig-J_Z_J_eta_J} $J\subset S^1$ runs from $J^-$ to $J^+$ in $S^1$. The convex hull 
in $Z$ of $J^\pm$ is $\eta_J$ (in red). The endpoint $J^+$ (in blue) is an ideal gap
supported at a point in $Z$ whereas $J^-$ (in green) is an end. Thus $\eta_J$
is a half-open interval in $Z$ that lands at $f(J^-)$, and
separates $Z_J - \eta_J$ (in orange) from $Z - Z_J$ in $Z$. However, the orientation
on $\eta_J$ (indicated by the arrow) runs from $f(J^-)$ to $f(J^+)$.}
\end{figure}

\begin{lemma}[Separation]\label{lemma:separation}
The path $\eta_J$ separates $Z-Z_J$ from $Z_J-\eta_J$ in $Z$.
\end{lemma}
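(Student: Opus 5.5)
We argue by contradiction, using only the tree structure of $Z$ together with the circular order on $S^1_Z$. Suppose there is a path $\alpha\subset Z$ from a point $x\in Z_J-\eta_J$ to a point $y\in Z-Z_J$ that avoids $\eta_J$. Since $Z$ has no embedded loops, $Z-\eta_J$ splits into path components, each attached to $\eta_J$ at a single point (or landing at an endpoint of $\eta_J$ when that endpoint is an end). We must show that no such component contains points of both $Z_J-\eta_J$ and $Z-Z_J$. Let $C$ be the component containing $\alpha$, and let $p\in\eta_J$ be its attachment point; if $\eta_J$ is a half-open or open line, $p$ could a priori be a landing point at an end, but then $C$ would have to reach $\eta_J$ in $Z$, so $p$ is a genuine point of $\eta_J$.

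First we show that the ends and ideal-gap supports of $C$ determine a connected subset $I_C$ of $S^1_Z$ that is disjoint from $\partial J$ and does not separate $J^-$ from $J^+$. Concretely, the set $\EE_C$ of ends represented by rays in $C$ is dense in an interval of $S^1_Z$. Moreover, for distinct components of $Z-p$ these sets are pairwise unlinked, as in the discussion following Lemma~\ref{lemma:order_completion_circle}. The two components $A^\pm$ of $Z-p$ containing the forward and backward parts of $\eta_J$ contain rays for $J^+$ and $J^-$, or have these as ideal gaps supported at $p$ or beyond. The closure $I_C$ of $\EE_C$ is then an interval of $S^1_Z$ lying in one of the two complementary arcs of $\{J^-,J^+\}$, and it meets $\partial J$ at most in an endpoint. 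That endpoint case occurs only when $J^\pm$ is an ideal gap supported at $p$, and it is handled by choosing the Dedekind cut correctly.

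Since $I_C$ lies in one complementary arc of $\{J^-,J^+\}$, either $I_C\subset J$ or $I_C\cap J\subset\partial J$. In the first case every end of $C$ lies in $J$, so every point of $C$ lies on a line joining two ends in $J$ (by hairiness, Lemma~\ref{lemma:short_hair_hairy}, every point of $C$ lies on such a line). Hence $C\subset Z_J$, contradicting $y\in C$. In the second case, a point $x\in C\cap Z_J$ would have to lie on the convex hull of ends and ideal-gap supports in $J$. Any such path passing through $x\in C$ must enter and leave $C$, so it must meet $\eta_J$ at $p$. This forces one of its ends to lie in $I_C$, and hence outside $J$ except possibly at $\partial J$. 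It follows that $x\in\eta_J$, contradicting $x\notin\eta_J$.

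The main obstacle is the step placing $I_C$ in a single complementary arc of $\partial J$. This requires carefully identifying the circular order on components of $Z-p$ with the order on $S^1_Z$, including the degenerate cases where $J^\pm$ is an ideal gap supported exactly at $p$, or where $p$ is an endpoint of $\eta_J$. I would handle these using Lemma~\ref{lemma:order_completion} and Lemma~\ref{lemma:single_point}: I would represent each ideal gap at $p$ as a cut between consecutive sets $\EE_A$, and then check that the complementary arcs of $\{J^-,J^+\}$ are unions of the intervals $\overline{\EE_A}$.
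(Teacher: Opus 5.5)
Your proposal is correct and is essentially the paper's argument. The paper just notes that a branch of $Z$ off $\eta_J$ meets $\eta_J$ from the positive side exactly when it lies in $Z_J$, and from the negative side otherwise; your arcs $I_C$, each lying in a single component of $S^1_Z-\partial J$, spell out that notion of ``side'' through the circular order on components of $Z-p$. The only slip is at the end of your second case, where ``$x\in\eta_J$'' does not follow. Instead, note that the element of $J$ responsible for the hull path entering $C$ is an end of $C$ or an ideal gap supported in $C$, so it lies in the interior of $I_C$ (the endpoints of $I_C$ are gaps supported at $p\notin C$); that interior misses $J$ in this case, so $C\cap Z_J=\emptyset$ outright.
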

\begin{proof}
If $p\in Z-Z_J$ then any path in $Z$ from $p$ to $\eta_J$ lies on the left
(i.e.\/ the negative) side of $\eta_J$ whereas if $p \in Z_J - \eta_J$
any path in $Z$ from $p$ to $\eta_J$ lies on the right (i.e.\/ the positive) side
of $\eta_J$ by definition. Thus if $p \in Z-Z_J$ and $q\in Z_J - \eta_J$
the unique path in $Z$ from $p$ to $q$ must intersect $\eta_J$. See Figure~\ref{fig-J_Z_J_eta_J}.
\end{proof}

\begin{lemma}[Closure of $Z_J$]\label{lemma:closure}
The closure $\overline{Z}_J$ is equal to $f(J)$, and $\overline{Z}_J\cap Z = Z_J$.
\end{lemma}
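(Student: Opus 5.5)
I will prove the two assertions separately: first $\overline{Z}_J = f(J)$ by two inclusions, then $\overline{Z}_J\cap Z = Z_J$.

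\textbf{The inclusion $f(J)\subset \overline{Z}_J$.} Let $e\in J$. If $e$ is an ideal gap, then $f(e)$ is its support. The support is a point of the convex hull of a nested sequence of intervals converging to $e$. For large $i$ these intervals lie in $J$ (possibly after trimming at an endpoint of $J$). Hence the support lies in $Z_J$, or, when $e=J^\pm$, in $\eta_J\subset Z_J$. If $e$ is an end, then $Z_J$ contains a ray representing $e$ (when $e$ is interior to $J$, or an endpoint, by the definition of convex hull). That ray lands at $f(e)$, by Lemma~\ref{lemma:limit_rays}, so $f(e)\in\overline{Z}_J$.

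\textbf{The inclusion $\overline{Z}_J\subset f(J)$.} Since $J$ is compact and $f$ is continuous (Lemma~\ref{lemma:f_continuous}), $f(J)$ is closed. It therefore suffices to show $Z_J\subset f(J)$.
\begin{itemize}
\item A point $p\in Z_J-\eta_J$ is a cut point. Some component $A$ of $Z-p$ contains $\eta_J$, and by Lemma~\ref{lemma:separation} it contains all of $Z-Z_J$. The other components lie in $Z_J-\eta_J$, and their ends lie in the interior of $J$. The ideal gaps supported at $p$ are the cuts between consecutive components of $Z-p$ in the circular order. At least one such cut is adjacent to a component other than $A$, and that cut lies in $J$. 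So $p\in f(J)$.
\item A point $p\in\eta_J$ is handled the same way. By hairiness (Lemma~\ref{lemma:short_hair_hairy}) and the definition of $Z_J$, there is branching of $Z_J$ on the positive side of $\eta_J$ arbitrarily close to $p$. Alternatively, $p$ is a limit of such points and $f(J)$ is closed. Either way $p\in f(J)$.
\end{itemize}

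\textbf{The identity $\overline{Z}_J\cap Z=Z_J$.} Suppose $p\in Z-Z_J$ lies in $\overline{Z}_J$; I seek a contradiction. Take points $p_i\in Z_J$ with $p_i\to p$, and fix $q\in\eta_J$. Apply Lemma~\ref{lemma:limit_rays} to the paths from $q$ to $p_i$. These paths lie in $Z_J$, since $Z_J$ is path connected and convex. The limit $\sigma$ is then an increasing union of subpaths of $Z_J$. Two cases arise.
\begin{itemize}
\item If $\sigma$ is the path from $q$ to $p$, then $p\in Z_J$, a contradiction.
\item Otherwise $\sigma$ is a proper ray in $Z_J$ landing at $p\in Z$. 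Combined with the path in $Z$ from $p$ back to $\sigma(0)=q$, it closes up. Since $p\notin Z_J$ and $\sigma\subset Z_J$, the closure of $\sigma$ together with the $Z$-path from $p$ to $q$ is an embedded loop in $Z$. As in the proof of Lemma~\ref{lemma:short_hair_hairy}, this contradicts uniqueness of paths in $Z$.
\end{itemize}
The main obstacle is making this loop argument rigorous. The ray and the return path may share an initial segment, so I will replace $q$ by their last common point. I also need the short hair property to ensure that the loop is genuinely a Jordan curve and not degenerate.
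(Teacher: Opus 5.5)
\textbf{There is a genuine gap, in Case 1 of your third step.}

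Your first two steps are sound and essentially match the paper. The inclusion $f(J)\subset\overline{Z}_J$ follows directly from the definition of $Z_J$ as a convex hull: it contains every support of an ideal gap in $J$, and a ray for every end in $J$. The inclusion $Z_J\subset f(J)$ comes from ideal gaps at $p$ lying on the positive side. The paper does this uniformly, by putting each point of $Z_J$ on the hull of two points of $J$. Your split into $Z_J-\eta_J$ and $\eta_J$ also works, once ``consecutive components'' is replaced by ``the cut immediately following a component $B\neq A$ of $Z-p$.'' That cut lies in $J$ because it is a limit of ends of $B$ and $J$ is closed.

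Now the gap. The sets $\sigma_n$ are limits of the arcs $\eta_i\cap T_n\subset Z_J$, so they lie in the closure of $Z_J$, not necessarily in $Z_J$. Every point of $\sigma_n$ other than its terminal point lies on $\eta_i$ for large $i$, but the terminal point need not. In Case 1, $\sigma=[q,p]$ and its terminal point is $p$ itself. So ``then $p\in Z_J$'' assumes exactly what you are trying to prove.

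The configuration you must exclude is $[q,p)\subset Z_J$ with $p\notin Z_J$ (take $p_i\in[q,p)$). There no loop appears at all: $[q,p]$ is the genuine unique arc in $Z$ from $q$ to $p$. So neither uniqueness of paths nor short hair yields a contradiction. Case 2 is not where the difficulty lies. A proper ray can never land at a point $x\in Z$: the ray together with $x$ would be the unique arc in $Z$ from its initial point to $x$, hence contained in some $T_n$, contradicting properness.

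The missing step is to use the equality $\overline{Z}_J=f(J)$ you already proved. Suppose $p\in\overline{Z}_J\cap Z$. Then $p=f(e)$ for some $e\in J$. The point $e$ is not an end, because ends map to landing points of proper rays, and these lie outside $Z$ by the remark above. So $e$ is an ideal gap in $J$ with support $p$, and $p\in Z_J$ by the definition of $Z_J$. Thus $\overline{Z}_J\cap Z=f(J)\cap Z=Z_J$, which is exactly how the paper concludes.
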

\begin{proof}
Evidently $f(J)$ is closed. It contains $Z_J$ because if 
$p,q \in J$ have convex hull $\eta$ in $Z$, and we orient $\eta$ so that
it runs from $p$ to $q$ where the oriented interval $[p,q]\subset S^1_Z$ is
contained in $J$, then every ideal gap supported at any $r\in \eta$ on
the positive side of $\eta$ is in $J$ and therefore $r \in f(J)$.
Hence $f(J)$ contains $\overline{Z}_J$.
Conversely, suppose $r$ is in the interior of $J$. Then by the
definition of the circular order, the convex hull of $f(J^-)$ and $f(r)$ lies on the
right of $\eta_J$ so that either $f(r)$ is in $Z_J$ or $f(r)$ is a landing point of 
a ray in $Z_J$ and therefore in either case $f(r)\in \overline{Z}_J$.  Hence
$f(J)$ is contained in $\overline{Z}_J$.

But now $\overline{Z}_J \cap Z = f(J)\cap Z = Z_J$.
\end{proof}

Next we must discuss components of $S^2 - \overline{Z}_J$. We establish
some properties.

\begin{lemma}[Boundary points]\label{lemma:boundary_points}
Let $U$ be a component of $S^2 - \overline{Z}_J$. Then 
$Z_J \cap \overline{U}$ is nonempty, and is contained in
$\eta_J$. 
\end{lemma}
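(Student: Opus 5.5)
We split the statement into two claims: that $Z_J\cap\overline{U}$ is nonempty, and that $Z_J\cap\overline U\subset\eta_J$.

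\textbf{Nonemptiness.} Since $U$ is a component of the complement of the closed set $\overline Z_J$, its frontier $\partial U$ is nonempty and contained in $\overline Z_J$. The set $\overline Z_J$ is nonempty and not all of $S^2$ (otherwise $U$ would not exist). We work with the finite trees $T_n$ of Lemma~\ref{lemma:increasing_union}. Because $Z$ is dense, $U$ contains points of $Z$, and these lie in $Z-Z_J$ by Lemma~\ref{lemma:closure}. Fix such a point $q\in U\cap Z$. Pick any $r\in Z_J$ and consider the unique path in $Z$ from $q$ to $r$. By Lemma~\ref{lemma:separation}, this path meets $\eta_J$. Its initial segment lies in $U$ until it first hits $\overline Z_J$. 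Let $s$ be the first point of the path lying in $\overline Z_J$. Then $s$ is in $Z\cap\overline Z_J=Z_J$ (again by Lemma~\ref{lemma:closure}), and $s$ is a limit of points in $U$, so $s\in\overline U$. This shows $Z_J\cap\overline U\neq\emptyset$. In fact $s\in\eta_J$: the portion of the path before $s$ lies in $Z-Z_J$, so by Lemma~\ref{lemma:separation} the first point of $Z_J$ encountered must lie on $\eta_J$.

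\textbf{Containment.} Let $p\in Z_J-\eta_J$; we must show $p\notin\overline U$. The idea is that such a $p$ is interior to $\overline Z_J$ in the relevant sense. Since $Z$ is hairy (Lemma~\ref{lemma:short_hair_hairy}), $p$ is a cut point, and every component of $Z-p$ either lies in $Z_J-\eta_J$ or contains $\eta_J$. There is exactly one component of the latter type, namely the one containing $\eta_J$.

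To produce a small disk neighborhood of $p$ inside $\overline Z_J$, we choose a small open neighborhood $W$ of $p$ disjoint from $\eta_J$. This is possible provided $p\notin\overline{\eta_J}$, which holds because $\overline{\eta_J}\cap Z=\eta_J$ by the strong landing property applied to the at most two rays of $\eta_J$. For large $n$ the components of $Z-T_n$ have diameter $<1/n$, so we may work with the finite planar tree $T_n\cup\eta_J$, which is tame. Suppose a point $x\in W$ lies in $S^2-\overline Z_J$. Then $x$ is a landing point of a ray, or lies on $Z$, coming from $Z-Z_J$ (Lemma~\ref{lemma:limit_rays}). Take the path from that ray to $p$. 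By Lemma~\ref{lemma:separation} it must pass through $\eta_J$, which lies far from $W$. Taking $x\to p$, the limit paths (Lemma~\ref{lemma:limit_rays}) produce a ray in $Z-Z_J$, or passing through $\eta_J$, that lands at $p$.

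\textbf{Main obstacle.} We expect the hardest step to be ruling out this landing ray. We plan to argue that the resulting ray, together with the path in $Z$ from its basepoint to $p$, forms a nontrivial loop in $Z$ through $p$, exactly as in the proof of Lemma~\ref{lemma:short_hair_hairy}. The path from $p$ to $\eta_J$ and the ray landing at $p$ from $\eta_J$ must differ, since $p\notin\eta_J$ and the ray lies near $p$ only at its end. This contradicts the uniqueness of paths in $Z$ (property (2) of Definition~\ref{definition:half_zipper}). Hence a neighborhood of $p$ lies in $\overline Z_J$, so $p\notin\overline U$, and therefore $Z_J\cap\overline U\subset\eta_J$.
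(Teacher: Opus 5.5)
Your nonemptiness argument is correct and is the paper's: follow the arc from $q\in U\cap Z$ toward $Z_J$ until it first meets $\overline Z_J$, and use $\overline Z_J\cap Z=Z_J$. In the containment half you have the right tools (Lemma~\ref{lemma:separation}, Lemma~\ref{lemma:limit_rays}, uniqueness of arcs). However, the step you flag as the main obstacle is never carried out, and the justification you give would not carry it.

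Lemma~\ref{lemma:limit_rays} needs a fixed base point, and you never fix one. Your arcs run between varying points near $x$ and $p$, and the limit is then described as a ray based on $\eta_J$. Suppose the base point is some $b\neq p$. The limit $\sigma$ lands at $p\in Z$, so $\sigma\cup\{p\}$ is an embedded arc in $Z$, and by uniqueness it \emph{is} $[b,p]$. So there is no loop unless you show $\sigma\neq[b,p)$. Your reasons ($p\notin\eta_J$, and the ray approaches $p$ only at its end) are satisfied equally by $[b,p)$ itself, so they yield no contradiction.

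The missing idea, which is what the paper uses, is that the approximating arcs avoid $Z_J-\eta_J$.
\begin{itemize}
\item Fix $q\in Z-Z_J$ and $x_i\in Z-Z_J$ with $x_i\to p$. If $[q,x_i]$ contained a point $c\in Z_J-\eta_J$, Lemma~\ref{lemma:separation} would give points $a,a'\in\eta_J$ on $[q,x_i]$ before and after $c$. But the unique arc from $a$ to $a'$ lies in $\eta_J$, a contradiction.
\item Every non-terminal point of the limit $\sigma$ lies on $[q,x_i]$ for large $i$, by the finite-tree structure of the $T_n$. Hence $\sigma\cap Z_J\subset\eta_J$.
\item Since $\sigma\cup\{p\}=[q,p]$, $Z_J$ is convex, and $\overline{\eta_J}\cap Z=\eta_J$, this forces $p\in\eta_J$.
\end{itemize}
The paper runs exactly this with $q,p_i\in U\cap Z$. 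That your $x_i$ may lie in other components is harmless, so you do get the stronger interior statement.

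Alternatively, if you want to genuinely follow Lemma~\ref{lemma:short_hair_hairy} as you suggest, take $p$ itself as the base point. All arcs $[p,x_i]$ pass through the same first point $a\in\eta_J$ (by uniqueness of arcs), which is at positive distance from $p$. So the limit ray contains $[p,a]$, is nondegenerate, and lands back at $p$, producing a Jordan curve in $Z$. Here distinctness is automatic, because both arcs from $a$ to $p$ are pieces of one embedded ray.

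A few minor points:
\begin{itemize}
\item $T_n\cup\eta_J$ need not be a finite tree, and you do not use it.
\item The cut-point property comes from Definition~\ref{definition:half_zipper}, not from hairiness.
\item $\overline{\eta_J}\cap Z=\eta_J$ holds because a proper ray cannot land in $Z$: adjoining the landing point would give the compact arc from its basepoint, which lies in some $T_m$. It does not follow merely from the fact that rays land.
\end{itemize}
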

\begin{proof}
Pick $q\in U \cap Z$.  Since $Z$ is path-connected, there is
a path in $Z$ from $q$ to $Z_J$ and this path must intersect $\overline{U}$
in a point of $Z_J\cap \overline{U}$ because $\overline{Z}_J\cap Z = Z_J$
by Lemma~\ref{lemma:closure}.

Pick $q \in U \cap Z$ and a sequence $p_i \in U\cap Z$ converging to
any $p\in Z_J \cap \overline{U}$. By Lemma~\ref{lemma:limit_rays}
the paths $\sigma_i$ from $q$ to $p_i$ converge after passing to a 
subsequence to $\sigma$ that lands at $p$. By
Lemma~\ref{lemma:separation} no $\sigma_i$ can meet $Z_J$ except
in a subset of $\eta_J$ and therefore $\sigma$ meets $Z_J$ only in a
subset of $\eta_J$. In particular, $p\in \eta_J$.
\end{proof}

Conversely to Lemma~\ref{lemma:boundary_points} we have the following:
\begin{lemma}[$\eta_J$ in boundary]\label{lemma:gamma_in_boundary}
Suppose $\eta_J$ consists of more than one point. Then there is a unique
connected component $U$ of $S^2 - \overline{Z}_J$ such that
$\eta_J \subset \overline{U}$ and $U$ lies on the negative side of
$\eta_J$.
\end{lemma}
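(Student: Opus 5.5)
The plan is to build the component $U$ from the branching of $Z$ on the negative side of $\eta_J$, using hairiness (Lemma~\ref{lemma:short_hair_hairy}) and the separation property (Lemma~\ref{lemma:separation}).

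\textbf{Existence near a single point.} First I show that points of $S^2-\overline{Z}_J$ accumulate on $\eta_J$ from the negative side. Fix a point $p$ in the interior of $\eta_J$. By hairiness, every subinterval $I\subset \eta_J$ around $p$ has a nontrivial branch $\alpha\subset Z$ leaving $I$ on its left, meeting $\eta_J$ only at its initial point. By Lemma~\ref{lemma:separation} and the definition of $Z_J$ (branches on the positive side lie in $Z_J$), $\alpha-\alpha(0)$ lies in $Z-Z_J$. By Lemma~\ref{lemma:closure}, $\overline{Z}_J\cap Z=Z_J$, so $\alpha-\alpha(0)\subset S^2-\overline{Z}_J$. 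Since the branch points can be chosen arbitrarily close to $p$, $p$ is in the closure of $S^2-\overline{Z}_J$ from the left.

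\textbf{Uniqueness of the component.} Next I show that all such branches, for all points of $\eta_J$, lie in a single component $U$. Take two branches $\alpha,\beta$ on the left of $\eta_J$, rooted at $a,b\in\eta_J$. Their tips can be joined by the unique path in $Z$, which runs along $\alpha$, then the subarc $[a,b]\subset\eta_J$, then $\beta$. I cannot use that path directly, because it passes through $\eta_J\subset\overline{Z}_J$. Instead I push it off slightly to the left. Using tameness of the finite tree $\alpha\cup[a,b]\cup\beta$, choose a thin regular neighbourhood and take the boundary arc of this neighbourhood on the negative side of $[a,b]$.

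The main obstacle is to show that this pushed-off arc can be chosen to avoid $\overline{Z}_J$. The set $\overline{Z}_J=f(J)$ could a priori approach $[a,b]$ from the left, via rays of $Z_J$ landing on $\eta_J$ from the left, or via limit points. Lemma~\ref{lemma:boundary_points} constrains this only partially. To handle it, I will use short hair. Fix $n$ large. Then $\overline{Z}_J$ near $[a,b]$ on the left can only come from components of $Z_J-T_n$, each of diameter $<1/n$, and these attach to $Z_J$ through $\eta_J$ only on the right, by Lemma~\ref{lemma:separation}. Suppose such a small component, or its closure, entered the left side of $[a,b]$. Then, arguing as in the proof of Lemma~\ref{lemma:short_hair_hairy} with Lemma~\ref{lemma:limit_rays}, a limiting ray from it would cross $\eta_J$ or close up a loop in $Z$, which is a contradiction. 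So a left collar of $[a,b]$ avoids $\overline{Z}_J$. The pushed-off arc lies in that collar and joins $\alpha$ to $\beta$. For a ray endpoint landing at an end I apply the same argument on compact subarcs and exhaust.

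\textbf{Conclusion.} Let $U$ be the component of $S^2-\overline{Z}_J$ containing the left collar of $\eta_J$. Then $\eta_J\subset\overline{U}$ and $U$ lies on the negative side of $\eta_J$.

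For uniqueness, suppose $U'$ is another component with $\eta_J\subset\overline{U'}$ lying on the negative side. Then $U'$ contains points arbitrarily close to an interior point $p$ of $\eta_J$ on the left. Such points lie in the left collar, hence in $U$, so $U'=U$.
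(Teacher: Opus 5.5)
Your proposal is essentially the paper's proof. The decisive step is that a thin collar of $\eta_J$ on the negative side misses $\overline{Z}_J$, because points of $Z_J$ accumulating on $\eta_J$ from the left would, via Lemma~\ref{lemma:limit_rays}, produce a ray closing up a loop in $Z$ as in Lemma~\ref{lemma:short_hair_hairy}; this is exactly the paper's argument, done there pointwise with a bisected disk $W$ and left half-disk $V$, and your uniqueness argument is also the paper's.

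The hairiness branches and pushed-off arcs are redundant once the collar is known to avoid $\overline{Z}_J$, since the collar is itself a connected subset of the complement. When you write the key step, run it as a sequence $p_i\in Z_J$ converging from the left to a point $p$ of $\eta_J$: the paths from $p$ to $p_i$ leave $\eta_J$ only on the right and cannot cross it, so they must exit a fixed disk about $p$, and this is what makes the limiting ray nondegenerate. Accumulation on $\eta_J$ gives the contradiction; a single small component of $Z_J-T_n$ merely entering some collar does not.
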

\begin{proof}
We first observe that if there is such a component $U$ then it is unique, since
if $p$ is an interior point of $\eta_J$ and $p_i \in S^2 - \eta_J$ converges to $p$ from
the left, $p_i$ must eventually be contained in $U$. 

The proof that such a component $U$ exists
is similar to that of Lemma~\ref{lemma:short_hair_hairy}.
Let $p\in \eta_J$ be an interior point, let $W\subset S^2$
be a small open disk centered at $p$ and bisected by $\eta_J$ and let
$V$ be the open half-disk component of $W$ in $S^2-\eta_J$ on the left
(i.e.\/ the negative) side of $\eta_J$.
We claim that if $W$ is small enough, 
$V$ is contained in some component of $S^2 - \overline{Z}_J$.

For otherwise, we can find $p_i \in Z_J\cap V$ which converge to $p$ from
the left. Let $\sigma_i$ be a sequence of paths in $Z_J$ from $p$ to $p_i$.
By Lemma~\ref{lemma:limit_rays} they converge to some $\sigma$. Since each $\sigma_i$
is a path in $Z_J$ it cannot exit $\eta_J$ to the left; in particular, if we choose
a closed interval $\tau$ in $S^2 - V$ so that $\eta_J \cup \tau$ is a Jordan arc, 
each $\sigma_i$ must intersect $\tau$ and therefore the same is true of $\sigma$.
Thus as in the proof of Lemma~\ref{lemma:short_hair_hairy} the image of $\sigma$ is
a nontrivial closed embedded loop in $Z_J$, which is a contradiction. 
\end{proof}

\begin{lemma}[No local cut points]\label{lemma:no_local_cut_points}
The set $\overline{Z}_J$ has no local cut points in $Z_J$. 
\end{lemma}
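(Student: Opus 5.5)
We argue by contradiction: suppose $p\in Z_J$ is a local cut point of $\overline{Z}_J$. Then there is a small closed disk neighborhood $B$ of $p$ in $S^2$ such that the component $K$ of $\overline{Z}_J\cap B$ containing $p$ has the property that $K-p$ is disconnected. The idea is to show that this forces a component $U$ of $S^2-\overline{Z}_J$ to touch $p$ from two ``sides''. In that case one can build a Jordan curve through $p$ consisting of a short arc in $U$ closed up at $p$, and this curve separates pieces of $\overline{Z}_J$ near $p$. Since $Z$ is path connected, tree-like, and dense, this will produce an embedded loop in $Z$, contradicting the definition of a half-zipper (Remark~\ref{remark-weaker} indicates that this is exactly where the hypothesis is used).

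\textbf{Step 1.} Take two pieces $K_1,K_2$ of $K-p$ that accumulate at $p$. Between them, near $p$, there are points of $B-\overline{Z}_J$. By a standard plane-topology argument (a Janiszewski-type lemma), some component $U$ of $S^2-\overline{Z}_J$ has $p\in\overline U$ and meets $B$ on both sides of $p$ relative to $K_1\cup K_2$. By Lemma~\ref{lemma:boundary_points}, $p\in\eta_J$. Moreover $U$ lies on the negative side of $\eta_J$, because points of $Z_J$ accumulate on the positive side of $\eta_J$. This last claim follows from hairiness (Lemma~\ref{lemma:short_hair_hairy}) applied within $Z_J$: any branch of $Z$ to the right of $\eta_J$ lies in $Z_J$ by Lemma~\ref{lemma:separation}. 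Hence, by Lemma~\ref{lemma:gamma_in_boundary}, $U$ is the unique component adjacent to $\eta_J$ on the left, at least when $p$ is an interior point of $\eta_J$.

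\textbf{Step 2.} Using $U$, take an arc $\alpha\subset U\cup\{p\}$ that leaves $p$, returns to $p$, and encloses a region $R$ containing part of $K_1$ but not $K_2$. Because $Z$ is dense, $U$ contains points of $Z-Z_J$. Choose such a point $q$ inside $R$ and another point $q'$ outside $R$, each close to $p$. By the landing argument of Lemma~\ref{lemma:limit_rays}, we can then take paths in $Z$ from $q$ and from $q'$ to $\eta_J$. Each such path lies in $Z-Z_J$ until it hits $\eta_J$, by Lemma~\ref{lemma:separation}. The path from $q$ must cross the Jordan curve $\alpha$, and it can do so only at $p$, since $\alpha-p\subset U$ and $U\cap Z$ is part of $Z-Z_J$. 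Running the same argument from the other side forces $\eta_J$ itself to pass through $p$ both inside and outside $R$. This means $\eta_J$ crosses $\alpha$ at $p$ in an essential way, and $U$ then appears on both the left and the right of $\eta_J$ near $p$, which contradicts Step 1.

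\textbf{Main obstacle.} The delicate step is making the local-cut-point hypothesis produce the arc $\alpha$ in a single complementary component $U$, and then handling the case where $p$ is an endpoint of $\eta_J$ (including the case $f(J^\pm)=p$), where Lemma~\ref{lemma:gamma_in_boundary} does not directly apply. I would first try to reduce to the interior case by slightly enlarging $J$ so that $p$ becomes interior to the new $\eta_J$, controlling the change in $\overline Z_J$ near $p$ via Lemma~\ref{lemma:closure}.
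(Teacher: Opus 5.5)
There is a genuine gap at the step that is supposed to produce the contradiction. In Step~2 you claim the path in $Z$ from $q$ to $\eta_J$ can cross $\alpha$ only at $p$, because $\alpha - p\subset U$ and $U\cap Z\subset Z-Z_J$. But that path lies in $Z-Z_J$ until it meets $\eta_J$. Since $\overline{Z}_J\cap Z=Z_J$ (Lemma~\ref{lemma:closure}), its interior is a connected subset of $S^2-\overline{Z}_J$ containing $q$, so it lies in $U$ itself, and nothing stops it from crossing $\alpha - p$ anywhere. The observation $U\cap Z\subset Z-Z_J$ would block crossings only for a path in $Z_J$. You also have no reason the path must cross $\alpha$ at all, since nothing says $\eta_J$ avoids $R$. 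So the conclusion that $\eta_J$ ``passes through $p$ both inside and outside $R$'' is unsupported.

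Step~1 does not rescue this. The fact that $Z_J$ accumulates on $\eta_J$ from the positive side (via hairiness) does not prevent a complementary component from also reaching $p$ from the positive side. Lemma~\ref{lemma:gamma_in_boundary} only says the negative half-disk lies in $U$. Ruling out complementary regions that reach $p$ from the ``wrong'' side is essentially the whole content of the lemma, so the contradiction you aim for is assumed rather than proved. Several other points are also left open:
\begin{enumerate}
\item The existence of a single component $U$ accessing $p$ from two sides is asserted by an unspecified Janiszewski-type argument. You cannot appeal to Lemma~\ref{lemma:one_complement}, since its proof depends on this lemma.
\item The embedded loop promised in your outline never appears in the steps.
\item The endpoint case is not handled.
\end{enumerate}

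The missing idea is to abandon $\eta_J$ and work with an arc through $p$ adapted to the local cut.
\begin{enumerate}
\item First show that each local piece $A,B$ of $\overline{Z}_J - p$ near $p$ contains an arc of $Z_J$ issuing from $p$. Otherwise, the paths in $Z_J$ from $p$ to points $p_i\in B\cap Z_J$ with $p_i\to p$ converge, by Lemma~\ref{lemma:limit_rays}, to a nontrivial loop in $Z_J$.
\item This gives an arc $\eta\subset Z_J$ through $p$ with its two halves in $A$ and $B$. Hence points of $Z-Z_J$ approach $p$ from both sides of $\eta$, and Lemma~\ref{lemma:limit_rays} gives rays $r,r'\subset Z-Z_J$ landing on $\eta$ from opposite sides.
\item Now extend $\eta$ to a proper line $\eta'\subset Z_J$ with ends $e^\pm\in J$, oriented so that $[e^-,e^+]\subset J$. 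Every point of a ray in $Z-\eta'$ landing on $\eta'$ from the positive side supports an ideal gap in $[e^-,e^+]\subset J$, so that ray lies in $Z_J$. This contradicts $r$ or $r'$ lying in $Z-Z_J$.
\end{enumerate}
Your remark that branches on the right of $\eta_J$ lie in $Z_J$ is the right ingredient. It has to be applied to rays actually landing at $p$, on a line through $p$ that straddles the two local pieces. Using an arbitrary arc through $p$ instead of $\eta_J$ also disposes of your endpoint case.
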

\begin{proof}
Suppose $p \in Z_J$ is a local cut point of $\overline{Z}_J$, so that there is a
neighborhood $U$ of $p$ for which there are at least two components $A$, $B$
of $U \cap (\overline{Z}_J - p)$. Without loss of generality we may assume that 
$A\cup p$ contains a path in $Z_J$ starting at $p$. We claim that the same is true of
$B$. For, otherwise, if we choose $p_i \in B\cap Z_J$ converging to $p$ and let
$\sigma_i$ be the paths in $Z_J$ from $p$ to $p_i$, then none of the $\sigma_i$ are
entirely contained in $B\cup p$, and therefore by Lemma~\ref{lemma:limit_rays} 
they converge to a nontrivial loop in $Z_J$, which is a contradiction. See
Figure~\ref{fig-local_cut_point}.

\begin{figure}[htpb]
\centering
\includegraphics[scale=1]{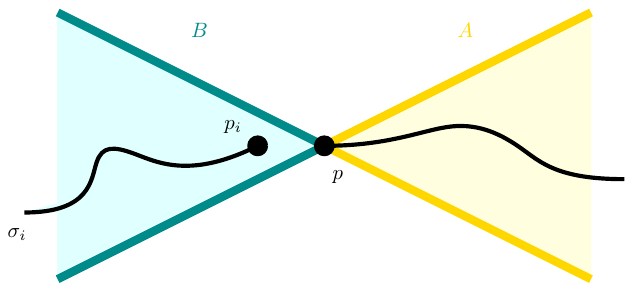}
\caption{\label{fig-local_cut_point} As $p_i \to p$ the paths $\sigma_i$ converge
to a nontrivial loop in $Z_J$.}
\end{figure}

We may therefore assume that there is a path $\eta \subset Z_J$ containing
$p$ and separating $\eta-p$ into $\eta^- \subset A$ and $\eta^+ \subset B$. 
Because $p$ is a local cut point of $\overline{Z}_J$ there is a sequence of
points $p_i \in Z - Z_J$ converging to $p$ from either side of $\eta$ and
by Lemma~\ref{lemma:limit_rays} we may therefore produce rays $r,r'$ in
$Z - Z_J$ landing on $\eta$ from either side. We shall show that
this gives a contradiction. 

In fact, any path
$\eta$ in $Z_J$ may be extended to a proper line $\eta'$ in $Z_J$ limiting to ends 
$e^\pm \in J$, and we may orient $\eta'$ so that it runs from $e^-$ to $e^+$
where the oriented interval $[e^-,e^+] \subset J$. For such an orientation, if
$\alpha$ is any ray in $Z-\eta'$ landing on $\eta'$ from the right 
(i.e.\/ positive) side, then any $p\in \alpha$ is
the support of an ideal gap in $[e^-,e^+]$, and therefore $\alpha \subset Z_J$.
Thus one of $r,r'$ must actually lie in $Z_J$ after all, and we obtain a contradiction.
\end{proof}

\begin{lemma}[One complementary component]\label{lemma:one_complement}
The complement $S^2 - \overline{Z}_J$ has exactly one component.
\end{lemma}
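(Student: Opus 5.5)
The plan is to combine Lemma~\ref{lemma:boundary_points} with Lemma~\ref{lemma:gamma_in_boundary}, and to rule out a second complementary component by a limiting-ray argument of the same kind used in Lemmas~\ref{lemma:short_hair_hairy} and~\ref{lemma:no_local_cut_points}.

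First, $S^2-\overline{Z}_J$ is nonempty. If $J$ is a proper interval, choose an end $e$ in the interior of $S^1_Z - J$. A ray representing $e$ eventually lies in $Z - Z_J$, which is disjoint from $\overline{Z}_J$ by Lemma~\ref{lemma:closure}. So some point of $Z$, and hence of $S^2$, lies outside $\overline{Z}_J$. I treat the degenerate cases separately: if $J$ is a point, or $\eta_J$ is a single point, then $f(J)$ is small or $Z-Z_J$ is handled directly.

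Next, let $U$ be any complementary component. By Lemma~\ref{lemma:boundary_points}, $\overline{U}$ meets $Z_J$, and only in points of $\eta_J$. The aim is to show that $U$ approaches $\eta_J$ from the negative (left) side, so that $U$ coincides with the unique component given by Lemma~\ref{lemma:gamma_in_boundary}.

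To do this, pick $q\in U\cap Z$ and $p_i\in U\cap Z$ converging to some $p\in \eta_J\cap \overline{U}$. Use Lemma~\ref{lemma:limit_rays} to obtain a limit $\sigma$ of the paths from $q$ to $p_i$. By Lemma~\ref{lemma:separation}, these paths lie in $Z-Z_J$ together with subsets of $\eta_J$. Hence $\sigma$ consists of a portion lying in $Z - Z_J$, possibly followed by a segment of $\eta_J$, and it lands at or ends at $p$. The part in $Z - Z_J$ attaches to $\eta_J$ from the negative side, by the same definitional argument used in the proof of Lemma~\ref{lemma:separation}: branches on the positive side belong to $Z_J$.

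It then follows that $U$ contains points arbitrarily close to some point of $\eta_J$ on its left side. Since $Z-Z_J$ is path connected (it is the component of $Z$ minus $\eta_J$ on the negative side, by Lemma~\ref{lemma:separation}), points of $Z - Z_J$ near $\eta_J$ on the left can be joined inside $Z-Z_J\subset S^2-\overline{Z}_J$. Every such point therefore lies in the component $U_0$ from Lemma~\ref{lemma:gamma_in_boundary}. More directly, $q\in U\cap (Z-Z_J)$, and $Z-Z_J$ is path-connected and disjoint from $\overline{Z}_J$, so all of $Z-Z_J$ lies in one component. Because each component contains a point of $Z$ (density of $Z$) and $Z\cap U\subset Z-Z_J$, every component contains points of the path-connected set $Z-Z_J$. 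Hence all components coincide.

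I expect the main obstacle to be justifying that $Z-Z_J$ is path-connected. When $\eta_J$ has interior, $Z-\eta_J$ may have many components, some of which lie on the positive side and hence in $Z_J$. The negative-side pieces attach to different points of $\eta_J$, so they are not directly connected to each other in $Z - Z_J$. In that case I would instead connect them through $S^2-\overline{Z}_J$ near $\eta_J$, using the left half-disks $V$ from the proof of Lemma~\ref{lemma:gamma_in_boundary}. These half-disks lie in a single component $U_0$, and a compactness argument along $\eta_J$ (covering it by finitely many such half-disks, with overlaps) shows they are all the same component. Each negative-side branch of $Z - Z_J$ accumulates on $\eta_J$ from the left, so it enters some such $V$ and therefore lies in $U_0$.
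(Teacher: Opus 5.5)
Your overall reduction is sound. Every complementary component is open and so contains a point of $Z$, and by Lemma~\ref{lemma:closure} that point lies in $Z-Z_J$. It therefore suffices to show that every path component of $Z-Z_J$ lies in the component $U_0$ of Lemma~\ref{lemma:gamma_in_boundary}. Your nonemptiness argument is also fine, and is more direct than the paper's, which essentially re-derives $\overline{Z}_J\cap Z=Z_J$. You are right that $Z-Z_J$ is not path-connected, so the ``more directly'' paragraph must be discarded.

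The gap is in the repair. A component $B$ of $Z-Z_J$ is a branch of $Z-\eta_J$ attached at a single point $x\in\eta_J\cap Z$. When $x$ is an interior point of $\eta_J$, your half-disk argument does put $B$ in $U_0$. But $x$ can be an endpoint of $\eta_J$. Whenever $J^-$ (or $J^+$) is an ideal gap supported at $p\in Z$, the components of $Z-p$ whose ends lie outside $J$ and which do not contain $\eta_J-p$ are components of $Z-Z_J$ attached at $p$. For instance, take $p$ of valence three with branches $B,C,A$ in circular order, $J^-$ the gap between $B$ and $C$, and $J^+$ in $A$; then $B\subset Z-Z_J$.

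Half-disks along the interior of $\eta_J$ say nothing about such $B$. Near $p$, $\overline{Z}_J$ also contains the closures of the branches at $p$ that lie in $J$, so nothing you have shown prevents $B$ from sitting in a second complementary component that touches $\overline{U_0}$ only at $p$. The degenerate case $\eta_J=\{p\}$ with $J$ nondegenerate is entirely of this type: $Z-Z_J$ is then a union of several components of $Z-p$, and ``handled directly'' is not an argument. In every such case, what must be excluded is precisely that $p$ is a local cut point of $\overline{Z}_J$.

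The missing ingredient is Lemma~\ref{lemma:no_local_cut_points}, which your proposal never uses. The paper's proof applies it directly and uniformly:
\begin{enumerate}
\item Given two components $U,V$, Lemmas~\ref{lemma:boundary_points} and~\ref{lemma:gamma_in_boundary} produce a common boundary point $p\in\eta_J\subset Z_J$.
\item Since $\overline{Z}_J=f(J)$ is locally connected, $p$ is accessible from both $U$ and $V$.
\item This gives an arc with one endpoint in each of $U$ and $V$ meeting $\overline{Z}_J$ only at $p$, so $p$ is a local cut point, contradicting Lemma~\ref{lemma:no_local_cut_points}.
\end{enumerate}
You can patch your argument the same way. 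If a branch attached at an endpoint $p$ lay in some $U'\neq U_0$, then $p\in\overline{U'}\cap\overline{U_0}$, since $\eta_J$, including its endpoint $p$, lies in $\overline{U_0}$. The accessibility argument then yields the same contradiction. Once you invoke this, however, it already handles every pair of components, and the covering of $\eta_J$ by half-disks becomes unnecessary.
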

\begin{proof}
Suppose there are at least two complementary components $U$ and $V$.
By Lemma~\ref{lemma:boundary_points} both $\partial \overline{U}$ and
$\partial \overline{V}$ must intersect $\eta_J$, and if $\eta_J$ has
more than one point then by Lemma~\ref{lemma:gamma_in_boundary} we may
assume without loss of generality that $U$ contains all of $\eta_J$
and lies on the negative side of $\eta_J$. But then in either case
there must be some point $p\in \eta_J$ in $\partial \overline{U}\cap
\partial \overline{V}$. The set $\overline{Z}_J$ is locally connected, since it
is the continuous image of the interval $J$; in particular, the point $p$
is accessible from both $U$ and $V$, and therefore
there is a compact interval $I$ with one endpoint in each of $U$ and $V$
and intersecting $\overline{Z}_J$ only at $p$. Consequently $p \in Z_J$ is a
local cut point of $\overline{Z}_J$, contrary to Lemma~\ref{lemma:no_local_cut_points}.

To see there is at least one complementary component, suppose not so 
that $Z_J$ is dense in $S^2$. Choose $q\in Z_J$ and $q' \in Z - Z_J$.
There is a unique path $\tau$ in $Z$ joining $q$ to $q'$, and $\tau$
necessarily intersects $Z-Z_J$ in a half-open segment. Now choose
a sequence of points $p_i \in Z_J$ converging to $q'$. The paths
$\sigma_i$ from $q$ to $p_i$ all lie in $Z_J$, and converge by
Lemma~\ref{lemma:limit_rays} to $\sigma\subset Z_J$ landing at $q'$; thus
$\sigma \cup q'$ is a path in $Z$ from $q$ to $q'$ which is different
from $\tau$, which is a contradiction.
\end{proof}

\begin{proof}[Proof of Lemma~\ref{lemma:disks}]
We first show that $f(J)$ has the topology of a closed disk.
By Lemma~\ref{lemma:closure}, $f(J) = \overline{Z}_J$, with $Z_J$ as in the discussion just after Lemma~\ref{lemma:disks}. 
By Lemma~\ref{lemma:one_complement}, $S^2 - \overline{Z}_J$ has one component $U$. Since
$\overline{Z}_J = f(J)$, it is locally connected, so it suffices to show $\overline{Z}_J$ has no (global) cut points. But any
(global) cut point of $\overline{Z}_J$ must lie in $Z_J$ since $Z_J$ is
path-connected, and $\overline{Z}_J$ has no local cut points at all
that are in $Z_J$ by Lemma~\ref{lemma:no_local_cut_points}.

Finally, $f(\partial J)$ is contained in the closure of $\eta_J$, and
$\eta_J$ is contained in $\overline{U}\cap \overline{Z}_J =
\partial \overline{Z}_J$ by Lemma~\ref{lemma:boundary_points} and 
Lemma~\ref{lemma:gamma_in_boundary}.
\end{proof}

\begin{proof}[Proof of Theorem~\ref{theorem:half_zipper}]
The map $f:S^1_Z \to S^2$ that sends an ideal gap to its support and an end to the
landing point of the associated ray in $Z$ is continuous (Lemma~\ref{lemma:f_continuous})
and for every closed interval $J\subset S^1$ satisfies $f(J) = \overline{Z}_J$ 
(Lemma~\ref{lemma:closure}) where $Z_J \subset Z$ is the convex hull in $Z$ of $J$.
Furthermore, each $\overline{Z}_J$ is a disk, and $f(\partial J)$ is contained in
$\partial \overline{Z}_J$ (Lemma~\ref{lemma:disks}). Thus $f$ is a
CaTherine wheel, and by construction, $Z$ is $Z^+$ where
$Z^\pm$ is the zipper associated to $f$.
Conversely if $f:S^1 \to S^2$ is any CaTherine wheel with
zipper $Z^\pm$ then for any $J\subset S^1$ we can define $\eta^+_J \subset Z^+$ to be
the oriented convex hull of $J^\pm$, and let $Z^+_J$ be the subset of $Z^+$ on the positive
side of $\eta^+_J$ and then $f(J) = \overline{Z}^+_J$; see e.g.\/
\cite{Catherine_wheels} \S~1.5 especially Lemma~1.22. In particular, $Z^+$ (as a set) 
determines $f$, so there is a unique $f$ with $Z=Z^+$. This concludes the proof
of Theorem~\ref{theorem:half_zipper}. 
\end{proof}

We conclude this subsection with a proof of the converse to Theorem \ref{theorem:half_zipper}. 
%The notation and setup is taken directly from \cite{Catherine_wheels} \S~1 andthe proof of Theorem~1.22 therein, and we direct the reader to that paper for details.

\begin{proof}[Proof of Proposition~\ref{proposition:wheels_to_short_hair}]
Any CaTherine wheel $f$ extends canonically to $F:S^2_f \to S^2$ where $S^2_f = S^1 \sqcup P^\pm$
for two open hemispheres $P^\pm$, and $Z^\pm = F(P^\pm)$. 
Each of $P^\pm$ admits a metric for which it is isometric to an open
Euclidean unit disk, with $S^1$ as the unit circle, and so that for each $p \in Z^+$ (say) the
point preimage $F^{-1}(p) \cap P^+$ is a relatively closed contractible subset
bounded by Euclidean straight line segments and/or round circular arcs perpendicular to
the boundary circle. These point preimages form a monotone upper semi-continuous 
decomposition of $P^+$; see \cite{Catherine_wheels} \S~1.4 especially Theorem~1.20 
for all these assertions.
 
For any fixed metric on $S^2$, and for any $\epsilon>0$ there is
a $\delta>0$ so that every subset of $S^2_f$ of diameter $<\delta$ has image of
diameter $<\epsilon$. Let $X_\delta^+$ be the subset of $P^+$ consisting of the union
of all decomposition elements of diameter $\ge \delta$, together with those
for which some boundary arc is part of a circle of radius at least $\delta$. 
Then the image $F(X_\delta^+)$ is a finite tree, 
and the diameter of every component of $Z^+ - F(X_\delta^+)$ is $<\epsilon$, as desired.
See Figure~\ref{decomposition-tree}.

\begin{figure}[htpb]
\centering
\includegraphics[scale=0.5]{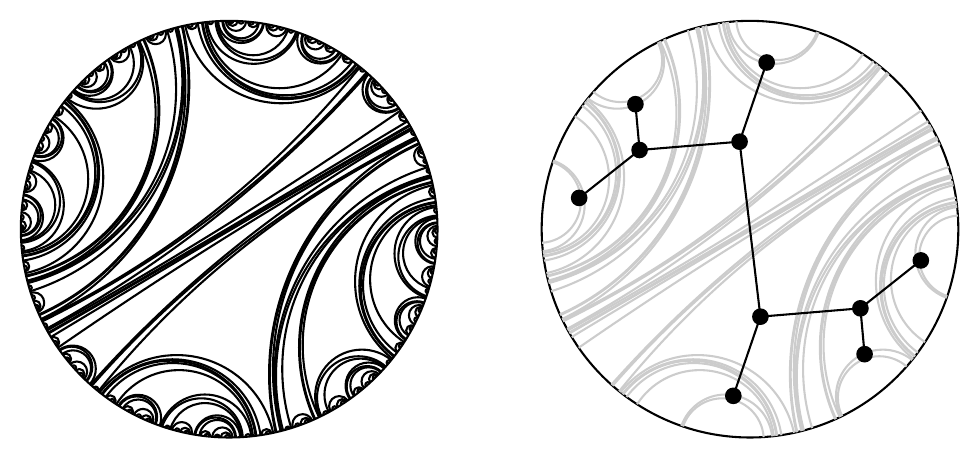}
\caption{The decomposition of $P^+$, the boundary arcs of radius $\ge \delta$, and the quotient tree.}
\label{decomposition-tree}
\end{figure}
\end{proof}

\subsection{Zippers}

We state and prove two lemmas about zippers that are used in \S~\ref{sec-lqg-proof}.
\begin{lemma}[Unique preimage]\label{lemma:unique_preimage}
Let $f:S^1 \to S^2$ be a CaTherine wheel with zipper $Z^\pm$. Let $p\in S^2 - Z^+$. Then
$f^{-1}(p)$ consists of a single point if and only if there is a unique equivalence
class of rays in $Z^+$ that lands at $p$.
\end{lemma}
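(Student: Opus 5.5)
We identify the wheel $f$ with the map $S^1_{Z^+}\to S^2$ built in Subsection~\ref{subsection:catherine_wheels}, then read off the fibre over $p$ from the structure of $S^1_{Z^+}$. By Proposition~\ref{proposition:wheels_to_short_hair}, $Z^+$ has short hair. By the uniqueness in Theorem~\ref{theorem:half_zipper}, after reparametrizing the domain we may take $f$ to be the map on $S^1_{Z^+}=\overline{\EE}$ that sends an end to the landing point of a representing ray and an ideal gap to its support. Since $p\notin Z^+$ and ideal gaps are supported at points of $Z^+$ (Lemma~\ref{lemma:single_point}), $f^{-1}(p)$ consists exactly of the ends of $Z^+$ whose rays land at $p$. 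Recall that an end is an equivalence class of proper rays, and any ray landing at $p\notin Z^+$ is proper. So $f^{-1}(p)$ is in bijection with the set of equivalence classes of rays landing at $p$. The existence of at least one such class is given by the strong landing property (Lemma~\ref{lemma:limit_rays}).

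The step I expect to need care is making sure this bijection is genuine, so that the identification of $S^1_{Z^+}$ with the original circle $S^1$ causes no subtlety. Two points must be checked.

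\begin{enumerate}
\item Two inequivalent ends give distinct points of $\overline{\EE}$. This holds because $\EE$ embeds in its order completion, and the circular order on $\EE$ is strict for distinct ends (\cite{Zippers}, Lemma~2.13).
\item The parametrization of $f$ matches that of the wheel from \S~\ref{subsection:catherine_wheels}, i.e.\ the two agree up to a homeomorphism of circles. For this I would invoke the uniqueness statement in the proof of Theorem~\ref{theorem:half_zipper}: $f(J)=\overline{Z}^+_J$ for every interval $J$. Since this relation determines $f$ via the circular order, the two parametrizations differ by a homeomorphism of $S^1$, which preserves cardinalities of fibres.
\end{enumerate}

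It then follows that $\# f^{-1}(p)$ equals the number of equivalence classes of rays in $Z^+$ landing at $p$. In particular $f^{-1}(p)$ is a single point if and only if this number is one, which proves both directions.

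If instead one wanted to avoid the uniqueness identification, the fallback is to work directly with the extension $F:S^2_f\to S^2$ from the proof of Proposition~\ref{proposition:wheels_to_short_hair}. For each $x\in f^{-1}(p)$, take a Euclidean radius in $P^+$ ending at $x$ and map it by $F$ to get a ray landing at $p$. Distinct preimages $x\neq y$ are separated in $\overline{P^+}$ by a decomposition element, which forces the corresponding rays to eventually diverge and hence to be inequivalent. Conversely, an arbitrary landing ray lifts to a path in $P^+$ accumulating on a single point of $f^{-1}(p)$.
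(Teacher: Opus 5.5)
Your proof is correct, but it takes a different route from the paper's.

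\textbf{The paper's route.} It argues through $Z^-$. It quotes Theorem~1.23 of \cite{Catherine_wheels}, which says the points with more than one preimage are exactly $Z^-\cup Z^+$. So for $p\notin Z^+$ it must show that $p\in Z^-$ if and only if two inequivalent rays of $Z^+$ land at $p$. One direction is a Jordan curve argument: two such rays together with $p$ bound a Jordan curve. The dense, path-connected set $Z^-$ must cross this curve, and since it is disjoint from $Z^+$ it can only do so at $p$. For the other direction, the paper transports the two ideal gaps of $Z^-$ supported at $p$ to two points of $S^1_+$ via $S^1_-\cong S^1\cong S^1_+$. It then notes, as you do, that these cannot be ideal gaps of $Z^+$ because $p\notin Z^+$.

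\textbf{Your route.} You use the uniqueness in Theorem~\ref{theorem:half_zipper}, together with Proposition~\ref{proposition:wheels_to_short_hair}, to replace $f$ by the universal-circle map of $Z^+$. You then read the fibre off directly: over $p\notin Z^+$ it is the set of ends whose rays land at $p$, and distinct ends are distinct points of $\overline{\EE}$.

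\textbf{Comparison.} Your argument handles both directions at once and never needs $Z^-$ or Theorem~1.23. It also proves the stronger statement that $\# f^{-1}(p)$ equals the number of ray classes landing at $p$. The paper's route gives in exchange a characterization: for $p\notin Z^+$, one has $p\in Z^-$ exactly when at least two inequivalent rays of $Z^+$ land at $p$. Both arguments rest on the same identification of the domain of $f$ with $S^1_{Z^+}$, which is what the uniqueness in Theorem~\ref{theorem:half_zipper} encodes. Both also rely on $Z^+$ being a half-zipper, which the paper takes from \cite{Catherine_wheels} and which you should state explicitly when invoking the theorem.

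\textbf{The fallback.} Your fallback via $F:S^2_f\to S^2$ is unnecessary. As written, it does not show that a ray whose lift accumulates at $x$ is equivalent to the image of the radius to $x$. So the direction ``unique preimage implies unique ray class'' would be incomplete there, and I would drop it.
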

\begin{proof}
If $f:S^1 \to S^2$ is a CaTherine wheel with zipper $Z^\pm$, then the set of
points $p \in S^2$ for which $f^{-1}(p)$ has more than one point is precisely
$Z^-\cup Z^+$; see \cite{Catherine_wheels} Theorem~1.23. Thus for $p \in S^2 - Z^+$
we must show that $p\in Z^-$ if any only if there are at least two equivalence classes
of rays in $Z^+$ that land at $p$.

If there are two inequivalent rays $r,r'$ in $Z^+$ that land at $p$ then (after replacing each of $r,r'$
by equivalent rays) the union $\eta:=r\cup r' \cup p$ is a Jordan curve with
$\eta - p \subset Z^+$. Since $Z^-$ is dense in $S^2$, it must intersect both
components of $S^2 - \eta$; since it is path-connected it must intersect $\eta$, and
since it is disjoint from $Z^+$, it follows that $p\in Z^-$.

Conversely suppose $p\in Z^-$. Then (as explained in the paragraph following
Lemma~\ref{lemma:order_completion_circle}) $p$ is in the support of at least two
ideal gaps in $S^1_-$, the circle which is the order completion of the space of
ends of $Z^-$. But we have seen in \S~\ref{subsection:catherine_wheels} 
that $S^1_-$ is canonically isomorphic to $S^1$ (the
domain of $f$) and also thereby to $S^1_+$, the circle which is the order completion of
the space of ends of $Z^+$. It follows that $p$ is associated to at least two distinct points
$e,e' \in S^1_+$. Neither of these can be ideal gaps in $S^1_+$ or else their support
(which is equal to $p$) would lie in $Z^+$. Thus they must both correspond to ends of
$Z^+$, and there are inequivalent proper rays representing these ends that both land at $p$.
\end{proof}

Although the circles $S^1_-,S^1,S^1_+$ are all canonically isomorphic,
the {\em orientations} on $S^1$ and $S^1_+$ agree, whereas they both {\em disagree} with
the orientation on $S^1_-$. This is true by convention; i.e.\/ this choice of orientation 
is part of the definition of $Z^\pm$.

\begin{lemma}[Circular order]\label{lemma:circular_order}
Let $f:S^1 \to S^2$ be a CaTherine wheel with zipper $Z^\pm$.
For $i=0,1,2$ let $p_i \in S^2$ be three distinct points with unique preimages 
$e_i \in S^1$ under $f$. Let $r_i$ be equivalence classes of
proper rays in $Z^+$ that land at $p_i$. Then the circular order on the $e_i$ in
$S^1$ agrees with the circular order on the ends of the $r_i$ in $Z^+$.
\end{lemma}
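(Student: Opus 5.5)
The plan is to reduce the statement to the canonical identification of $S^1$ with $S^1_+ = \overline{\EE}$ (the order completion of the ends of $Z^+$). This identification was used in the proof of Lemma~\ref{lemma:unique_preimage}. By Theorem~\ref{theorem:half_zipper}, applied to $Z = Z^+$ (which has short hair by Proposition~\ref{proposition:wheels_to_short_hair}), $f$ agrees up to reparametrization with the map $S^1_Z \to S^2$ constructed in \S~\ref{subsection:catherine_wheels}. That map sends an end to the landing point of a representing ray and an ideal gap to its support, and the identification $S^1 \cong S^1_+$ preserves orientation by the convention stated just before the lemma. So it suffices to show that, under this identification, each $e_i$ is exactly the end of $Z^+$ represented by $r_i$. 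The circular order on $\{e_i\}$ in $S^1$ is then the circular order on the corresponding ends in $\overline{\EE}$, and that is by definition the circular order on ends coming from \cite{Zippers} Lemma~2.13.

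The steps are as follows.

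First, handle the case $p_i \in Z^+$. A proper ray landing at $p_i$ cannot exist as a ray in $Z^+$ that closes up, so this case is vacuous or must be excluded. If $p_i\in Z^+$, then $p_i$ is a cut point and supports at least two ideal gaps, hence has at least two preimages, contradicting uniqueness. So $p_i \in S^2 - Z^+$.

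Second, since $f^{-1}(p_i)=\{e_i\}$, the point $e_i$ viewed in $S^1_+$ is not an ideal gap, because its support would lie in $Z^+$. Hence $e_i$ is an end of $Z^+$, represented by some proper ray $r_i'$ landing at $f(e_i)=p_i$.

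Third, show that $r_i$ and $r_i'$ are equivalent. If they were not, their ends would be two distinct points of $S^1_+$, both mapped by $f$ to $p_i$ (each end maps to the landing point of its ray). This would give two preimages of $p_i$, a contradiction. Equivalently, Lemma~\ref{lemma:unique_preimage} already says the landing class at $p_i$ is unique.

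Finally, conclude that the circular order of $(e_0,e_1,e_2)$ in $S^1$ equals that of the ends $[r_0],[r_1],[r_2]$ in $\overline{\EE}$.

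The main obstacle is justifying that the identification $S^1 \cong S^1_+$ is the one under which $f$ becomes the constructed map, with matching orientation. I would cite the uniqueness part of Theorem~\ref{theorem:half_zipper} together with \cite{Catherine_wheels} Lemma~1.22. Those results say $f(J)=\overline{Z^+_J}$, where $Z^+_J$ is the part of $Z^+$ on the positive side of the oriented convex hull of $J^\pm$. This pins down the identification interval by interval, and in particular shows that positively ordered triples in $S^1$ correspond to positively ordered triples of ends. One could also argue directly. Take $J=[e_0,e_2]$ containing $e_1$. Then $p_1\in f(J)$ is not in $f(\partial J)$, so the ray $r_1$ eventually lies in $Z^+_J$, on the positive side of the line joining the ends of $r_0$ and $r_2$. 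By the definition of the circular order on ends, this says $[r_1]\in[[r_0],[r_2]]$.
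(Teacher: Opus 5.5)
Your proposal is correct and follows the paper's argument. The paper's proof is the one-line observation that the claim holds by the definition of the circular order on ends of $Z^+$ together with the natural orientation-preserving identification of $S^1$ with $S^1_+$; your steps showing that each $e_i$ is the end $[r_i]$ (rather than an ideal gap) just make that identification explicit.

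One small fix in your optional direct argument: $p_1\notin f(\partial J)$ alone does not force $r_1$ to eventually lie in $Z^+_J$, since points of $\eta_J$ lie in $\partial f(J)$ without generally lying in $f(\partial J)$. What you need is that $p_1$ lies in the interior of $f(J)$. This follows because a point of $\partial f(J)$ also has a preimage in the closure of $S^1 - J$, which would contradict $f^{-1}(p_1)=\{e_1\}$ with $e_1$ interior to $J$.
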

\begin{proof}
This is true by the definition of the circular order on the ends of $Z^+$, and the
natural (orientation-preserving) identification of $S^1_+$ with $S^1$ (the domain of $f$).
\end{proof}

\section{Checking the axioms for the LQG geodesic tree}
\label{sec-lqg-proof}

In this section we will deduce Theorem~\ref{thm-lqg-wheel} from Theorem~\ref{theorem:half_zipper}. 
Throughout, we fix $\gamma \in (0,2)$ and write $D$ for the $\gamma$-LQG metric, as introduced just after~\eqref{eqn-lqg-tensor}. For $t  > 0$ and $z\in\BB C$, we define the open LQG metric ball
\eqb  \label{eqn-lqg-ball}
\mcl B_t(z) := \left\{w\in\BB C : D(z,w) < t\right\}  .
\eqe 
We note that $\ol{\mcl B_t(z)}$ is compact~\cite[Proposition 3.8]{lqg-metric-estimates}. Moreover, the complement $\BB C\settminus \ol{\mcl B_t(z)}$ a.s.\ has infinitely many connected components~\cite[Theorem 1.14]{lqg-zero-one}.
For $r > 0$, we also define the Euclidean ball
\eqb \label{eqn-eucl-ball}
B_r(z) := \left\{w\in\BB C : |z-w| < r \right\} .
\eqe 

\subsection{Preliminary results on confluence of geodesics}
\label{sec-lqg-prelim} 

Our proof that $Z_\infty$ of Theorem~\ref{thm-lqg-wheel} satisfies the axioms in Theorem~\ref{theorem:half_zipper} is primarily based on existing results for LQG geodesics coming from~\cite{lqg-zero-one} (which in turn builds on~\cite{gm-confluence}). The most important input is the following proposition, which is an easy consequence of results in~\cite{lqg-zero-one}. See Figure~\ref{fig-finite-cross}, left for an illustration of the proposition statement.

\begin{proposition}[Confluence across a $D$-annulus] \label{prop-finite-cross}
Let $z\in\BB C$. Almost surely, for every choice of radii $s > t > 0$, there is a finite set of points $\mcl X = \mcl X_{t,s} \subset \bdy \mcl B_t(z)  $ such that the following is true. Every $D$-geodesic from $z$ to a point of $\BB C\settminus \ol{\mcl B_s(z)}$ passes through a point in $\mcl X$; and there is a unique $D$-geodesic from $z$ to $x$ for each $x\in\mcl X$.
\end{proposition}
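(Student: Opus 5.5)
We will deduce Proposition~\ref{prop-finite-cross} from the confluence results of~\cite{lqg-zero-one} (which build on~\cite{gm-confluence}), plus a short topological reduction. The plan has three steps.

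\textbf{Step 1: a finite crossing set for fixed radii.} For fixed rational $0<t<s$, we will cite the fact from~\cite{lqg-zero-one} that a.s.\ the set
\[
\mcl X'_{t,s} := \left\{ \sigma(t) \,:\, \sigma \text{ is a $D$-geodesic from $z$ to a point of } \partial\mcl B_s(z) \right\}
\]
is finite. This is the ``finitely many geodesics cross an annulus'' statement; in~\cite{gm-confluence} it appears as: for fixed $t<s$, only finitely many points of $\partial\mcl B_t(z)$ lie on geodesics from $z$ to $\partial\mcl B_s(z)$.

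Any $D$-geodesic from $z$ to a point $w$ with $D(z,w)>s$ passes through $\partial\mcl B_s(z)$ at time $s$. Its restriction to $[0,s]$ is a geodesic from $z$ to a point of $\partial\mcl B_s(z)$. So its time-$t$ point lies in $\mcl X'_{t,s}$, which settles the first assertion for fixed rational radii.

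\textbf{Step 2: uniqueness of geodesics to points of $\mcl X$.} Take $x = \sigma(t)$ with $\sigma$ a geodesic from $z$ to a point $y\in\partial\mcl B_s(z)$. Suppose there were two distinct geodesics $\sigma_1,\sigma_2$ from $z$ to $x$. Concatenating each with $\sigma|_{[t,s]}$ would give two distinct geodesics from $z$ to $y$, and splicing gives further ones. This alone is not contradictory, since some points have non-unique geodesics.

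Instead we use the stronger confluence statement in~\cite{lqg-zero-one}: a.s.\ all geodesics from $z$ to points outside $\mcl B_s(z)$ coincide on $[0,t]$ up to finitely many branches, and each branch point lies at distance strictly less than $t$. Equivalently, we will invoke the result that for fixed $z$, a.s.\ every point lying in the interior of a geodesic from $z$ (that is, not as an endpoint) has a unique geodesic back to $z$. This is the ``geodesic tree rooted at $z$'' statement in~\cite{gm-confluence,lqg-zero-one}.

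Each $x\in\mcl X$ is an interior point of a geodesic from $z$ to $y$, since $t<s$. Hence the geodesic from $z$ to $x$ is unique. We will then set $\mcl X_{t,s}$ to be the set of such points $x$ that actually arise, which is a subset of $\mcl X'_{t,s}$ and therefore finite.

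\textbf{Step 3: from rational radii to all radii.} Take a countable intersection of the almost-sure events over rational pairs. For general $s>t>0$, choose rationals $t<t'<s'<s$.

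Any geodesic from $z$ to the complement of $\ol{\mcl B_s(z)}$ also exits $\mcl B_{s'}(z)$. So its time-$t'$ point lies in the finite set $\mcl X_{t',s'}$. By the uniqueness in Step 2, its initial segment on $[0,t']$ is the unique geodesic from $z$ to that point. There are finitely many such unique geodesics, so evaluating them at time $t$ gives a finite set $\mcl X_{t,s}$ through which every relevant geodesic passes.

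Each point of this set is an interior point of one of these geodesics, since $t<t'$. So the interior-uniqueness statement again gives unique geodesics to the points of $\mcl X_{t,s}$.

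\textbf{Main obstacle.} The main obstacle is locating the precise form of the cited results. In particular we need that the interior-point uniqueness holds simultaneously for all points, almost surely, and not merely for fixed points. If~\cite{lqg-zero-one} only provides finiteness, we would argue uniqueness directly. Two distinct geodesics from $z$ to $x$, each extended past $x$, would produce geodesics to points outside $\mcl B_{s'}(z)$ that disagree at some time in $(0,t)$. Applying Step 1 at smaller rational radii, and letting those radii shrink, would then yield infinitely many crossing points, which contradicts finiteness.
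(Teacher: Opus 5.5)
Your Step 1 is where the proposal breaks down. The confluence results in the literature do not give the finiteness statement you attribute to them. In the form the paper uses them, \cite[Propositions 3.6 and 3.7]{lqg-zero-one} (building on \cite{gm-confluence}), they are statements about \emph{filled} metric balls. They give a finite crossing set on $\partial\mathcal B_t(z)$ only for geodesics from $z$ to points in a single complementary component of $\overline{\mathcal B_t(z)}$: the one containing $\infty$, or the one containing a fixed target $w$, with the almost sure statement holding for that fixed $w$. But $\mathbb C\setminus\overline{\mathcal B_t(z)}$ a.s.\ has infinitely many components \cite[Theorem 1.14]{lqg-zero-one}. A geodesic from $z$ to a far point in some other component crosses $\partial\mathcal B_t(z)$ at a point the filled statement says nothing about. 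So Step 1, as written, assumes for fixed radii essentially the whole content of the proposition. The reduction from filled to unfilled balls is exactly what the paper's proof supplies.

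Here is the missing argument.
\begin{itemize}
\item Every component of $\mathbb C\setminus\overline{\mathcal B_t(z)}$ that meets $\mathbb C\setminus\overline{\mathcal B_s(z)}$ also meets $\partial\mathcal B_s(z)$.
\item Only finitely many components meet $\partial\mathcal B_s(z)$. A path between points of $\partial\mathcal B_s(z)$ lying in different components must go down to $\overline{\mathcal B_t(z)}$ and back, so such points are at $D$-distance at least $2(s-t)$. Meanwhile the compact set $\partial\mathcal B_s(z)$ is covered by finitely many $D$-balls of radius $s-t$.
\item In each of these finitely many components, choose a point of $\mathbb Q^2$ at distance greater than $s$ from $z$. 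Apply the fixed-target filled-ball result to each; this holds simultaneously a.s.\ because there are only countably many rational targets.
\item Take the union of the resulting finite sets.
\end{itemize}

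With this inserted, the rest of your plan is sound. Step 3 is the same passage to arbitrary radii via rationals $t<t'<s'<s$ that the paper uses. In Step 2, you derive uniqueness from the simultaneous interior-point uniqueness of \cite[Lemma 3.9]{lqg-zero-one}, which is the input behind Lemma~\ref{lem-tree-geo-unique}. That is a valid alternative to reading uniqueness off the filled-ball propositions, as the paper does.

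Your fallback in the ``main obstacle'' paragraph would not work, though. Two distinct geodesics to the same point contribute only two crossing points at each intermediate radius. That is perfectly compatible with finiteness, so no contradiction arises. Fortunately you do not need the fallback.
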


\begin{figure}[t]
\begin{center}
\includegraphics[width=0.45\textwidth]{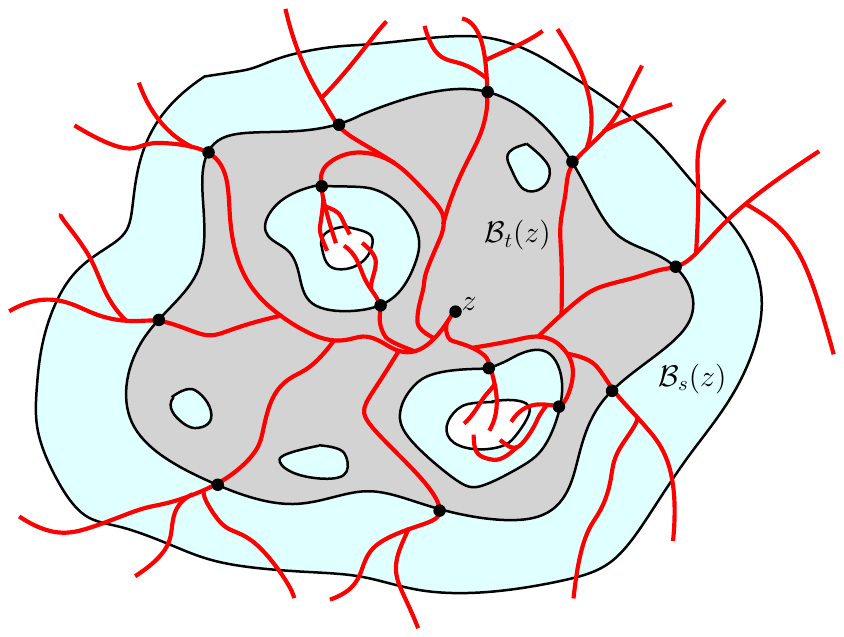}  
\includegraphics[width=0.35\textwidth]{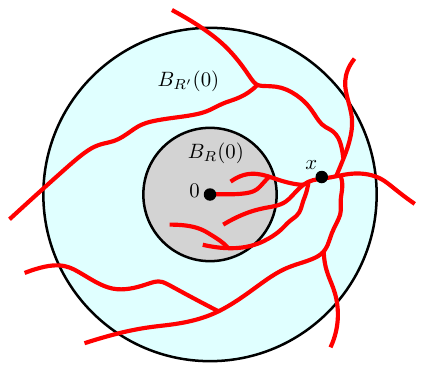}  
\caption{\label{fig-finite-cross} 
\textbf{Left:} Illustration of the statement of Proposition~\ref{prop-finite-cross}. Points of $\mcl X$ are shown in black and various representative $D$-geodesics from $z$ to points of $\BB C\settminus \ol{\mcl B_s(z)}$ are shown in red.
\textbf{Right:} Illustration of the statement of Lemma~\ref{lem-confluence-pt}. 
}
\end{center}
\end{figure}

\begin{proof} 
We first prove the proposition statement for a fixed deterministic pair of times $s > t > 0$. 
We claim that a.s.\ there are only finitely many connected components of $\BB C\settminus \ol{\mcl B_t(z)}$ which intersect $\BB C\settminus \ol{\mcl B_s(z)}$. 
Let us first note that each such connected component must also intersect $\bdy\mcl B_s(z)$, so it suffices to show that a.s.\ there are only finitely many connected components of $\BB C\settminus \ol{\mcl B_t(z)}$ which intersect $\bdy\mcl B_s(z)$. 
By~\cite[Lemma 3.8]{lqg-metric-estimates}, the closed $D$-ball $\ol{\mcl B_s(z)}$ is $D$-compact. If $U$ and $V$ are connected components of $\BB C\settminus \ol{\mcl B_t(z)}$ which intersect $\bdy \mcl B_s(z)$, then any path from $\bdy \mcl B_s(z) \cap U$ to $\bdy \mcl B_s(z) \cap V$ must pass from $\bdy \mcl B_s(z)$ to $\bdy\mcl B_t(z)$ and then from $\bdy\mcl B_t(z)$ to $\bdy\mcl B_s(z)$, so must have $D$-length at least $2(s-t)$. Hence $D(U,V)\geq 2(s-t)$. Since $\bdy\mcl B_s(z)$ can be covered by finitely many $D$-balls of radius $s-t$, there can be at most finitely many connected components of $\BB C\settminus \ol{\mcl B_t(z)}$ which intersect $\bdy\mcl B_s(z)$. 
 
For $w\in  \BB C $ with $D(z,w) > t$, define the filled metric ball $\mcl B_t^{w,\bullet}(z) := \BB C\settminus \mcl U$, where $\mcl U = \mcl U(t,z,w)$ is the connected component of $\BB C\settminus \ol{\mcl B_t(z)}$ which contains $w$. By the translation invariance of the law of $\Phi$, viewed modulo additive constant, we may apply~\cite[Propositions 3.6 and 3.7]{lqg-zero-one} to filled metric balls centered at $z$ (instead of at 0). These propositions imply that a.s.\ for each $w\in \BB Q^2 \settminus \{z\}$ with $D(z,w) > s$, there exists a finite set of points $\mcl X_{t,s}^w\subset \bdy\mcl B_t(z)$ such that the following is true (we restrict to $w\in\BB Q^2$ since~\cite[Proposition 3.7]{lqg-zero-one} holds a.s.\ for a fixed choice of $w$). Each $D$-geodesic from $z$ to a point of $\BB C\settminus \mcl B_s^{w,\bullet}(z)$ passes through a point of $\mcl X_{t,s}^w$, and there is a unique $D$-geodesic from $z$ to $x$ for each $x\in\mcl X_{t,s}^w$. 

By the first paragraph, we can find a finite set $W \subset\BB C$ which includes one point of $(\BB C\settminus \ol{\mcl B_s(z)})\cap U$ for each connected component $U$ of $\BB C\settminus \ol{\mcl B_t(z)}$ for which this intersection is non-empty. Then the proposition statement holds with $\mcl X =\bigcup_{w\in W} \mcl X_{t,s}^w \subset\bdy\mcl B_t(z)$. 

It remains to extend to a result which holds for all times $s > t > 0$ simultaneously. The proposition statement for deterministic $s > t > 0$ implies that the proposition statement a.s.\ holds simultaneously for every rational $s > t > 0$. Given an arbitrary $s > t > 0$, choose $s', t' \in \BB Q$ such that $s > s' > t' > t$. Let $\mcl X' = \mcl X_{t',s'} \subset \bdy\mcl B_{t'}(z)$ be as in the proposition statement. For $x\in \mcl X'$, let $\sigma_x : [0,t']\to\BB C$ be the (unique) $D$-geodesic from $z$ to $x$. By the defining properties of $\mcl X'$, each $D$-geodesic $\sigma$ from $z$ to a point of $\BB C\settminus \mcl B_t(z)$ (which is a subset of $\BB C\settminus \mcl B_{t'}(z)$) passes through some $x\in \mcl X'$. Hence, it must coincide with $\sigma_x$ on $[0,t']$. In particular, $\sigma(t) = \sigma_x(t)$. Since the $D$-geodesic from $z$ to $x$ is unique, so is the $D$-geodesic from $z$ to $\sigma_x(t)$. Hence, the proposition statement holds with $\mcl X =\mcl X_{s,t} = \{\sigma_x(t) : x\in\mcl X'\}$. 
\end{proof}
%By definition, every point which is hit by a geodesic started from $z$ at a time other than its terminal time belongs to $Z_z$. Hence the same property continues to hold if we remove from $\mcl X^0$ all of the points which are not in $Z_z$. That is, the proposition statement holds with $\mcl X = \mcl X^0 \cap Z_z$.  

We are primarily interested in the $D$-geodesic tree rooted at $\infty$, as defined in~\eqref{eqn-geo-tree-infty}.
But, for some lemmas it is more convenient to work with the {\em $D$-geodesic tree rooted at $z$} for $z\in\BB C$, which is the set
\eqb   \label{eqn-geo-tree-z} 
Z_z := \left\{ \sigma(t) \,:\, \text{$\sigma$ is a $D$-geodesic from $z$ to some $w\in\BB C$ and $0\leq t < D(z,w)$} \right\} .
\eqe 
We will now state some lemmas which allow us to transfer results between $Z_z$ and $Z_\infty$. We first need the following, which is essentially proven in~\cite{lqg-zero-one}. See Figure~\ref{fig-finite-cross}, right for an illustration. We recall from~\eqref{eqn-eucl-ball} that $B_r(\cdot)$ denotes a Euclidean ball.

\begin{lemma}[Confluence across a Euclidean annulus] \label{lem-confluence-pt}
Almost surely, for each $R > 0$, there exists $R'  >R$ (random) and $x \in B_{R'}(0) \settminus B_{ R}(0)$ such that every $D$-geodesic from a point of $B_{  R}(0)$ to a point of $\BB C\settminus B_{R'}(0)$ passes through $x$. Moreover, if $w\in\BB C$ and there is a $D$-geodesic $\sigma$ from $w$ to a point of $\BB C\settminus B_{R'}(0)$ which enters $B_R(0)$, then necessarily $w\in B_{R'}(0)$. 
\end{lemma}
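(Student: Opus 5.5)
The plan is to deduce Lemma~\ref{lem-confluence-pt} from Proposition~\ref{prop-finite-cross}, applied with center $z=0$, together with compactness of LQG balls and the fact that $D$ induces the Euclidean topology. By monotonicity it suffices to treat each $R$ in a countable family. Indeed, if the statement holds for $R_0 \geq R$, with outer radius $R_0'$ and confluence point $x$, then it holds for $R$ with the same $R'=R_0'$ and the same $x$. Here $x\notin B_R(0)$ because $x\notin B_{R_0}(0)$. So we may fix $R$ and work almost surely.

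Step 1: reduce to geodesics from $0$. Set $t_1 := \sup_{u\in B_R(0)} D(0,u)$, which is finite since $\overline{B_R(0)}$ is compact. Choose $t > t_1$ and apply Proposition~\ref{prop-finite-cross} at a larger level, with $s_0 > t_0 > t$ large. The goal is a single confluence point rather than a finite set.

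For this we use the standard trick from~\cite{lqg-zero-one}. That paper shows there is a single point through which all geodesics from a small region to far away pass (cf.\ Proposition~\ref{prop-geo-infty}, which gives $\sigma - B_{R_1}(0) = \sigma' - B_{R_1}(0)$ for geodesics to $\infty$). I would cite the finite-target analogue from~\cite{lqg-zero-one}, namely confluence of geodesics from $B_R(0)$ to far points. Concretely, choose $R_1$ via Proposition~\ref{prop-geo-infty} applied with $r=R$. Take $x$ to be a point on the common tail of all geodesics to $\infty$ from $B_R(0)$, lying outside $B_{R_1}(0)$. Then choose $R'$ so large that any geodesic from $B_R(0)$ to a point outside $B_{R'}(0)$ must pass through $x$.

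Step 2: justify the finite-target claim. This is the main obstacle, since Proposition~\ref{prop-geo-infty} concerns only geodesics to $\infty$. I would argue by contradiction. Suppose there are geodesics $\sigma_n$ from $u_n\in B_R(0)$ to $w_n$ with $|w_n|\to\infty$ which avoid $x$. By Arzel\`a--Ascoli, using $D$-Lipschitz parametrizations and local compactness of $D$-balls, a subsequence converges locally uniformly to a $D$-geodesic from some $u\in\overline{B_R(0)}$ to $\infty$. Enlarge $R$ slightly beforehand so that $u$ lies in the open ball to which Proposition~\ref{prop-geo-infty} applies. This limit passes through $x$, and so do its neighbors. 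One then needs confluence to pass to approximants: avoiding a single point is not a closed condition. To get around this, I would instead use Proposition~\ref{prop-finite-cross} around $0$ to show the $\sigma_n$ eventually share an initial segment with the limit up to radius $t$ beyond $B_R(0)$. Here I use the fact that geodesics from points of $B_R(0)$ to far points enter the finite set $\mathcal X$, and pass through fixed points of $\mathcal X$ for infinitely many $n$. Combining this with uniqueness of geodesics from $0$ to points of $\mathcal X$ upgrades the convergence to eventual agreement on a segment containing $x$. I expect this upgrade, from uniform convergence to exact coincidence, to be the delicate part.

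Step 3: the second assertion. Suppose $w\notin B_{R'}(0)$ and a geodesic $\sigma$ from $w$ to a point outside $B_{R'}(0)$ enters $B_R(0)$ at a point $u$. Reverse the initial piece of $\sigma$ and the final piece, so that both are geodesics from $u$ to points outside $B_{R'}(0)$. By Step 1, both pieces pass through $x$, once on each side of $u$. This contradicts the simplicity of $\sigma$, since $x\notin B_R(0)$ and so $x \neq u$. Hence $w\in B_{R'}(0)$.
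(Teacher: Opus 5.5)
Your Step~3 is exactly the paper's argument for the second assertion. For the first assertion, the paper does what you mention in passing in Step~1: it takes the statement directly from the proof of \cite[Proposition 4.4]{lqg-zero-one}. The trouble is Step~2, where you try to derive the first assertion from Propositions~\ref{prop-geo-infty} and~\ref{prop-finite-cross}; that step has a genuine gap.

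The Arzel\`a--Ascoli limit only produces a ray from some $u\in\ol{B_R(0)}$ that passes through $x$. As you note yourself, this says nothing about whether $\sigma_n$ hits $x$. The mechanism you propose for the upgrade does not apply. Proposition~\ref{prop-finite-cross} centered at $0$, and the uniqueness of geodesics from $0$ to points of $\mcl X$, constrain only geodesics \emph{emanating from $0$}. Your $\sigma_n$ start at varying points $u_n\in B_R(0)$ and are not sub-paths of geodesics from $0$. They may cross $\bdy\mcl B_t(0)$ at arbitrary points, and nothing forces them through $\mcl X$. So there is no route to ``eventual agreement on a segment containing $x$''.

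Your compactness idea does work for a single fixed root $z$. Since $\mcl X_{t,s}$ is finite, the condition ``$\sigma_n(t)=x'$ for a fixed $x'\in\mcl X_{t,s}$'' survives the limit. Uniqueness of the ray from $z$ then forces all geodesics from $z$ to sufficiently far points through the point where that ray crosses $\bdy\mcl B_t(z)$.

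The lemma, however, asks for a single $x$ and a single $R'$ that work simultaneously for all (uncountably many) starting points in $B_R(0)$ and all far targets. That two-sided, uniform confluence is not a soft consequence of the two propositions. Proposition~\ref{prop-geo-infty} controls only targets at $\infty$, and Proposition~\ref{prop-finite-cross} controls only a fixed root. Applying the single-root argument to countably many roots gives neither a uniform $R'$ nor a common $x$. This input has to come from the LQG-specific arguments of \cite{lqg-zero-one}. If you replace Step~2 by that citation, your proof coincides with the paper's.
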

\begin{proof}
The first assertion is shown in the proof of~\cite[Proposition 4.4]{lqg-zero-one}.
Now let $w\in\BB C$ and $\sigma$ be as in the second assertion and suppose for contradiction that $w\notin B_{R'}(0)$. By the defining property of $x $, the segment of $\sigma$ between $w$ and the first time it enters $B_R(0)$ must visit $x $. But, again by the defining property of $x $, the segment of $\sigma$ after the first time it enters $B_R(0)$ must also visit $x $. This is impossible since $\sigma$ is a simple path. Hence $w$ must be in $B_{R'}(0)$.  
\end{proof}

\begin{lemma}[Comparing $Z_\infty$ and $Z_z$] \label{lem-infty-to-z}
Almost surely, for each $R > 0$, there exists $z \in\BB Q^2$ such that $Z_\infty\cap B_R(0) = Z_z\cap B_R(0)$. In fact, we can arrange so that for every $D$-geodesic $\sigma$ from a point $w\in \BB C$ to $\infty$ which intersects $B_R(0)$, there exists a $D$-geodesic $\wt \sigma$ from $w$ to $z$ such that $\sigma(t) = \wt \sigma(t)$ for every $t \geq 0$ such that $\sigma(t) \in B_R(0)$. 
\end{lemma}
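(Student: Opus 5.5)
We apply Lemma~\ref{lem-confluence-pt} twice. Given $R>0$, first apply it with radius $R$ to get $R_1>R$ and a point $x_1\in B_{R_1}(0)\settminus B_R(0)$. Then apply it with radius $R_1$ to get $R_2>R_1$ and a point $x_2\in B_{R_2}(0)\settminus B_{R_1}(0)$. We choose $z\in\BB Q^2$ with $|z|>R_2$. The heuristic is that every relevant geodesic, whether it goes to $\infty$ or to $z$, must funnel through $x_1$ after leaving $B_R(0)$. Once a geodesic passes through $x_1$, its portion inside $B_R(0)$ is the same whichever far target it is heading to.

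First, let $\sigma$ be a $D$-geodesic from $w$ to $\infty$ that meets $B_R(0)$.
\begin{itemize}
\item By the second assertion of Lemma~\ref{lem-confluence-pt} (at radius $R$), $w\in B_{R_1}(0)$.
\item Since $\sigma$ is unbounded, it eventually leaves $B_{R_1}(0)$. Its initial segment is then a geodesic from a point of $B_{R_1}(0)$ to a point outside $B_{R_2}(0)$.
\item Hence $\sigma$ passes through $x_2$, and after the last time it visits $B_R(0)$ it also passes through $x_1$.
\item More carefully: let $t_0$ be a time with $\sigma(t_0)\in B_R(0)$. The segment from $\sigma(t_0)$ to a point outside $B_{R_1}(0)$ is a geodesic, so it passes through $x_1$ at some time $t_1>t_0$.
\item If $\sigma$ re-entered $B_R(0)$ after $t_1$, the segment from that re-entry time onward would hit $x_1$ again, contradicting simplicity. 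So $\sigma$ never enters $B_R(0)$ after $t_1$.
\end{itemize}
Now build $\wt\sigma$. Take any $D$-geodesic $\rho$ from $x_1$ to $z$. Let $\wt\sigma$ follow $\sigma|_{[0,t_1]}$ and then $\rho$. This concatenation is a geodesic from $w$ to $z$ provided
\[
D(w,z)=D(w,x_1)+D(x_1,z).
\]
This identity holds if some geodesic from $w$ to $z$ passes through $x_1$. Since $w\in B_{R_1}(0)$ only, and not necessarily in $B_R(0)$, we argue as follows. The geodesic from $w$ to $z$ goes from $B_{R_1}(0)$ to outside $B_{R_2}(0)$, so it passes through $x_2$. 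Similarly $\sigma$ passes through $x_2$. So both geodesics realize the identity through $x_2$, namely $D(w,z)=D(w,x_2)+D(x_2,z)$, and $\sigma$ restricted up to $x_2$ is a geodesic from $w$ to $x_2$.

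Therefore we instead define $\wt\sigma$ as $\sigma$ up to its hitting time of $x_2$, concatenated with a geodesic from $x_2$ to $z$. This is a geodesic from $w$ to $z$. It agrees with $\sigma$ at every time $t$ with $\sigma(t)\in B_R(0)$, provided all such times occur before $\sigma$ hits $x_2$. That holds by the simplicity argument above, run at radius $R_1$ with $x_2$ in place of $x_1$: $\sigma$ never re-enters $B_{R_1}(0)\supset B_R(0)$ after hitting $x_2$. This gives the second assertion, and hence the inclusion $Z_\infty\cap B_R(0)\subset Z_z\cap B_R(0)$. The points $\sigma(t)$ with $t>0$ in $B_R(0)$ are points of $\wt\sigma$ strictly before time $D(w,z)$, because $z\notin B_R(0)$.

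For the reverse inclusion, take a point $\wt\sigma(t)\in B_R(0)$ on a geodesic $\wt\sigma$ from some $w$ to $z$, with $0\le t<D(w,z)$.
\begin{itemize}
\item By the second assertion of Lemma~\ref{lem-confluence-pt} at radius $R_1$, $w\in B_{R_2}(0)$. This step is where $|z|>R_2$ is used.
\item We need a third annulus. Apply the lemma once more at radius $R_2$ to get $R_3$ and $x_3$, and choose instead $|z|>R_3$.
\item Then $\wt\sigma$ passes through $x_3$ after its last visit to $B_R(0)$.
\item Let $\tau$ be the geodesic from $x_3$ to $\infty$ given by Proposition~\ref{prop-geo-infty}. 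Take $\sigma'$ to be a geodesic from $w$ to $\infty$; it also passes through $x_3$.
\item Hence $\wt\sigma$ up to $x_3$, followed by $\sigma'$ after $x_3$, is a geodesic to $\infty$ containing $\wt\sigma(t)$.
\end{itemize}
The case $t=0$, i.e.\ the point $w$ itself, needs separate care. Here we extend backwards: pick a point $w'$ close to $w$ along a geodesic through $w$. This uses the fact that every point is an interior point of some geodesic toward $z$, or alternatively that $Z_z$-starting points are approximable. It is the step I expect to be most delicate. I would first try to avoid it by noting that $w\in Z_z$ with $t=0$ requires $w$ itself to be hit in the interior of a geodesic to $\infty$. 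I would get this by applying Proposition~\ref{prop-finite-cross}-type confluence centered at $z$ to a point slightly beyond $w$.

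The main obstacle is the bookkeeping of which radius each geodesic's starting point is confined to, which forces nesting three annuli, together with this endpoint issue.
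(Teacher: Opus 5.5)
Once you discard $x_1$, your argument is the paper's. You apply Lemma~\ref{lem-confluence-pt} at radius $R$ to get $R_1$, and at radius $R_1$ to get $R_2$ and the funnel point $x_2$. You then take $z\in\mathbb{Q}^2\setminus B_{R_2}(0)$, concatenate through $x_2$, and use simplicity to rule out a return to $B_{R_1}(0)$ after $x_2$. The second assertion and the inclusion $Z_\infty\cap B_R(0)\subset Z_z\cap B_R(0)$ are correctly handled this way.

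\textbf{The endpoint case is not real.} The step you flag as most delicate comes from reading \eqref{eqn-geo-tree-z} backwards. There the geodesic runs from $z$ to $w$, with parameter in $[0,D(z,w))$. So the endpoint that is kept is $z$ itself, which is not in $B_R(0)$, and the endpoint that is excluded is the far point $w$.

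Consequently every point of $Z_z\cap B_R(0)$ has the form $\widetilde\sigma(t)$, where $\widetilde\sigma$ is a geodesic from some $w$ to $z$ and $0<t<D(w,z)$. Your concatenation already puts such a point at a positive time on a geodesic to $\infty$, as \eqref{eqn-geo-tree-infty} requires. There is no case $t=0$. Likewise, in the forward inclusion the condition that matters is $t>0$, which you have.

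\textbf{Your patch would not work.} It should be abandoned rather than completed. Under your reading, $Z_z=\mathbb{C}\setminus\{z\}$, and the lemma would then force $B_R(0)\subset Z_\infty$. That is false, since $Z_\infty$ contains no embedded loop and hence no disk. So the fact your patch needs, that an arbitrary point is an interior point of some geodesic, is false.

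\textbf{The third annulus is unnecessary.} Since $|z|>R_1$, apply the second assertion of Lemma~\ref{lem-confluence-pt} at radius $R$ (not $R_1$) to the geodesic from $w$ to $z$. This already gives $w\in B_{R_1}(0)$. So the same $x_2$ handles the reverse inclusion, and this is the ``similar argument'' the paper invokes. Your three-annulus version is still valid, just longer.
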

\begin{proof}
By the second assertion of Lemma~\ref{lem-confluence-pt}, a.s.\ for each $R > 0$ there exists $R_1 > R$ such that if $w\in\BB C$ and there is a $D$-geodesic from $w$ to a point of $\BB C\settminus B_{R_1}(0)$ which enters $B_R(0)$, then necessarily $w\in B_{R_1}(0)$. 
By the first assertion of Lemma~\ref{lem-confluence-pt} (with $R_1$ in place of $R$), there exists $R_2 > R_1$ and $x\in B_{R_2}(0)\settminus B_{R_1}(0)$ such that every $D$-geodesic from a point of $B_{R_1}(0)$ to a point of $\BB C\settminus B_{R_2}(0)$ passes through $x$. 

Let $z \in\BB Q^2\settminus B_{R_2}(0)$. We claim that the lemma statement holds for this choice of $z$. Indeed, let $w \in \BB C$ and let $\sigma $ be a $D$-geodesic from $w$ to $\infty$ which intersects $B_R(0)$. By the previous paragraph, $w\in B_{R_1}(0)$. By the defining property of $x $, $\sigma$ visits $x $. Furthermore, since $z\notin B_{R_2}(0)$, every $D$-geodesic from $w$ to $z$ also visits $x $. From this, we get that the segment of $\sigma$ between $w$ and $x $ is also a segment of a $D$-geodesic from $w$ to $z$. Let $\wt \sigma$ be this geodesic from $w$ to $z$. Then $\sigma(t) = \wt \sigma(t)$ for each $t \in [0, D(w, x )]$. The defining property of $x$ implies that $\sigma$ cannot re-visit $B_R(0)$ after hitting $x$, since otherwise $\sigma$ would have to visit $x$ a second time (which is impossible since geodesics are simple paths). Hence $\sigma(t) = \wt \sigma(t)$ for each $t\geq0 $ such that $\sigma(t) \in B_R(0)$. This implies the second assertion in the lemma statement and also that $Z_\infty\cap B_R(0) \subset Z_z \cap B_R(0)$. A similar argument gives $Z_z\cap B_R(0) \subset Z_\infty \cap B_R(0)$.
\end{proof}

\subsection{The LQG geodesic tree is a half-zipper}
\label{sec-tree-disconnect}

In this subsection, we will check that $Z_\infty$ is a half-zipper in the sense of Definition~\ref{definition:half_zipper}.  
We start with the following lemma for the LQG geodesic tree rooted at $z\in\BB C$, as defined in~\eqref{eqn-geo-tree-z}.

\begin{lemma}[Removing a point disconnects $Z_z$] \label{lem-tree-disconnect}
Let $z\in\BB C$. The following is true a.s. Let $\sigma : [0,T] \to \BB C$ be a $D$-geodesic started from $z$ and let $t \in (0,T)$. Then $Z_z \settminus \{\sigma(t)\}$ is not connected and $\sigma[0,t)$ and $\sigma(t,T)$ lie in different connected components of $Z_z\settminus \{\sigma(t)\}$. 
\end{lemma}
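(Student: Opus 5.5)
The plan is to separate the two pieces of $\sigma$ by using the metric distance to $z$, together with the finiteness from Proposition~\ref{prop-finite-cross}. Points of $\sigma[0,t)$ satisfy $D(z,\cdot)<t$ and points of $\sigma(t,T)$ satisfy $D(z,\cdot)>t$. Suppose some path $P$ in $Z_z\settminus\{\sigma(t)\}$ joins a point $\sigma(s_0)$, $s_0\in(t,T)$, to a point of $\sigma[0,t)$. By continuity of $D(z,\cdot)$ along $P$ there is a first time at which $P$ reaches the sphere $\bdy\mcl B_t(z)$. Let $q\neq\sigma(t)$ be the point there, and let $P_0$ be the initial subpath before it. Then $P_0\settminus\{q\}$ is a connected set contained in $\{y: D(z,y)>t\}\cap Z_z$, and it contains $\sigma(s_0)$. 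We want to show that $q$ must equal $\sigma(t)$, which is the contradiction.

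To each $y\in Z_z$ with $D(z,y)>t$ we attach a label $L(y)\subset\bdy\mcl B_t(z)$. It is the set of points $\tau(t)$, where $\tau$ ranges over $D$-geodesics from $z$ to $y$. Fix $y$ and choose $s\in(t,D(z,y))$. Applying Proposition~\ref{prop-finite-cross} (which holds simultaneously for all $s>t$) to a neighbourhood of $y$ contained in $\BB C\settminus\ol{\mcl B_s(z)}$ shows the following: on that neighbourhood all labels lie in one finite set $\mcl X_{t,s}$, and each $x\in\mcl X_{t,s}$ has a unique geodesic from $z$. Next, by Arzel\`a--Ascoli for geodesics (limits of $D$-geodesics are $D$-geodesics), if $y_n\to y$ then every subsequential limit of points of $L(y_n)$ lies in $L(y)$. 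Since the labels near $y$ take values in a finite set, this gives $L(y_n)\subset L(y)$ for $n$ large, so $L$ is locally finite-valued and upper semicontinuous. Note that $\sigma(t)\in L(\sigma(s_0))$.

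The main step is to show that the property ``$L(y)=\{\sigma(t)\}$'' propagates along $P_0\settminus\{q\}$. Upper semicontinuity makes the set $\{y: L(y)\subset\{\sigma(t)\}\}$ relatively open. Closedness is the delicate part, because a point might carry two labels $x_0=\sigma(t)$ and $x_1$. My first attempt at handling this is to use the tree structure. Each $y\in Z_z$ is an interior point of some geodesic from $z$ to a farther point $w$. Two geodesics from $z$ to $y$ that pass through distinct points $x_0,x_1\in\mcl X_{t,s}$ would then give two geodesics to $w$ that differ on $[0,t]$. I would then use confluence again (Proposition~\ref{prop-finite-cross} with a smaller inner radius, which makes the geodesics to $x_0,x_1$ unique and hence forces them to already split before radius $t$) to argue that the label set is a singleton for every $y\in Z_z$ outside $\ol{\mcl B_t(z)}$. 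If singleton labels cannot be guaranteed, the fallback is to run the connectedness argument with the set-valued labels directly. In that version one tracks $\{y: \sigma(t)\in L(y)\}$, which is closed by upper semicontinuity, and checks openness using local finiteness.

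Granting the propagation, every $y\in P_0\settminus\{q\}$ has a geodesic from $z$ through $\sigma(t)$. Letting $y\to q$ along $P_0$, these geodesics converge to a geodesic from $z$ to $q$ which passes through $\sigma(t)$ at time $t$. Since $D(z,q)=t$, this forces $q=\sigma(t)$, contradicting the choice of $P$. Hence $\sigma(t,T)$ and $\sigma[0,t)$ lie in different path components of $Z_z\settminus\{\sigma(t)\}$. In particular the set is disconnected, because both pieces are nonempty. I expect the closedness/propagation step, i.e.\ controlling geodesics with several labels, to be the main obstacle.
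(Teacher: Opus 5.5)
\textbf{There is a genuine gap.} Your outline is essentially the paper's argument. Your fallback set $\{y:\sigma(t)\in L(y)\}$ is the paper's set $H$. Your Arzel\`a--Ascoli/upper-semicontinuity argument with the finite set $\mcl X_{t,s}$ is exactly how the paper shows that each point of $\bdy H\settminus\{\sigma(t)\}$ has a second geodesic from $z$ through a different point of $\bdy\mcl B_t(z)$. Your endgame is also sound: a limiting geodesic to $q$ through $\sigma(t)$ at time $t$ forces $q=\sigma(t)$.

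The gap is the step you flag yourself, and neither remedy closes it. The missing ingredient is that every $y\in Z_z$ has a \emph{unique} $D$-geodesic from $z$ (Lemma~\ref{lem-tree-geo-unique}). Without it, points with two labels can sit on the boundary of your propagation set. That breaks both closedness of $\{L(y)=\{\sigma(t)\}\}$ and openness of $\{\sigma(t)\in L(y)\}$.
\begin{itemize}
\item Your fallback does not avoid this. Local finiteness and upper semicontinuity only give $L(y')\subset L(y)$ for $y'$ near $y$. That does not give $\sigma(t)\in L(y')$ when $L(y)$ has two elements.
\item Your ``first attempt'' applies confluence at the wrong radius. The geodesics to $x_0,x_1\in\bdy\mcl B_t(z)$ being unique and splitting before radius $t$ is fully consistent with $y$ (and $w$) having two geodesics from $z$. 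So no contradiction arises.
\end{itemize}

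\textbf{The fix} is to apply Proposition~\ref{prop-finite-cross} at the radius of $y$ itself. Let $y=\sigma'(r)$, where $\sigma'$ is a geodesic from $z$ to $w$ and $0<r<D(z,w)$. Take inner radius $r$ and outer radius $s\in(r,D(z,w))$. Then $\sigma'$ must pass through a point of $\mcl X_{r,s}\subset\bdy\mcl B_r(z)$, necessarily at time $r$. Hence $y\in\mcl X_{r,s}$, and its geodesic from $z$ is unique. The paper instead cites~\cite[Lemma 3.9]{lqg-zero-one} for this fact.

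With this, your labels on $Z_z\cap\{D(z,\cdot)>t\}$ are singletons and your argument goes through. One further point: you only show the two pieces lie in different \emph{path} components, and ``the set is disconnected'' does not follow from that alone. Once labels are singletons, though, the set $\{y\in Z_z: D(z,y)>t,\ L(y)=\{\sigma(t)\}\}$ is relatively clopen in $Z_z\settminus\{\sigma(t)\}$. Closedness at limit points with $D(z,y)=t$ follows from your endgame. This gives a genuine topological separation, as in the paper.
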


For the proof of Lemma~\ref{lem-tree-disconnect}, we need the following fact about $Z_z$. 

\begin{lemma}[Unique geodesics in $Z_z$ and $Z_\infty$]  \label{lem-tree-geo-unique}
Let $z\in\BB C$. Almost surely, for each $p \in Z_z$, there is a unique $D$-geodesic from $z$ to $p$. 
Furthermore, for each $p\in Z_\infty$, there is a unique $D$-geodesic from $p$ to $\infty$. 
\end{lemma}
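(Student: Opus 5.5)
The first assertion will follow from Proposition~\ref{prop-finite-cross} applied with the inner radius equal to $D(z,p)$. The second will be reduced to the first using Lemma~\ref{lem-infty-to-z}, which lets us replace the root $\infty$ by a rational root $z$ far away.

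\textbf{First assertion.} Fix $z$ and work on the almost sure event of Proposition~\ref{prop-finite-cross}, which holds simultaneously for all radii $s>t>0$.
\begin{enumerate}
\item Let $p\in Z_z$. Then $p=\sigma(t)$ for some $D$-geodesic $\sigma:[0,T]\to\BB C$ from $z$ to $w=\sigma(T)$ with $0\le t<T$. If $t=0$ then $p=z$ and there is nothing to prove, so assume $t>0$. Note that $t=D(z,p)$.
\item Suppose $\alpha$ and $\alpha'$ are $D$-geodesics from $z$ to $p$. The concatenations $\alpha*\sigma|_{[t,T]}$ and $\alpha'*\sigma|_{[t,T]}$ have $D$-length $t+(T-t)=T=D(z,w)$. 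Hence both are $D$-geodesics from $z$ to $w$.
\item Choose $s\in(t,T)$, so $w\notin\ol{\mcl B_s(z)}$. Apply Proposition~\ref{prop-finite-cross} with radii $s>t$. Each of the two concatenated geodesics passes through a point of $\mcl X_{t,s}\subset\bdy\mcl B_t(z)$.
\item A geodesic from $z$ meets $\bdy\mcl B_t(z)$ only at time $t$. At time $t$, both concatenations are at $p$. Therefore $p\in\mcl X_{t,s}$.
\item By Proposition~\ref{prop-finite-cross}, the $D$-geodesic from $z$ to $p$ is unique, so $\alpha=\alpha'$.
\end{enumerate}

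\textbf{Second assertion.} Work on the intersection of the first assertion's almost sure events over all $z\in\BB Q^2$; this countable intersection still has probability one. Also work on the event of Lemma~\ref{lem-infty-to-z}.
\begin{enumerate}
\item Let $p\in Z_\infty$. Then $p=\sigma(t)$ with $t>0$ for a $D$-geodesic $\sigma$ from some $w$ to $\infty$.
\item Suppose $\beta\neq\beta'$ are $D$-geodesics from $p$ to $\infty$, and pick $u$ with $\beta(u)\neq\beta'(u)$.
\item Choose $R$ so that $B_R(0)$ contains $w$, $p$, $\beta(u)$ and $\beta'(u)$. Let $z\in\BB Q^2$ be given by Lemma~\ref{lem-infty-to-z}. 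Then $z\notin B_R(0)$: indeed, $\beta$ is a geodesic to $z$ inside $B_R(0)$, and $z$ is not hit at a finite time by a geodesic to $\infty$ that stays within the correspondence. Alternatively, one can simply take $z$ outside $B_{R_2}(0)$ as in that lemma's proof.
\item The lemma yields $D$-geodesics $\tilde\beta$ and $\tilde\beta'$ from $p$ to $z$ that agree with $\beta$ and $\beta'$ at all times when these are in $B_R(0)$. In particular $\tilde\beta(u)=\beta(u)\neq\beta'(u)=\tilde\beta'(u)$, so $\tilde\beta\neq\tilde\beta'$.
\item The lemma also yields a $D$-geodesic $\tilde\sigma$ from $w$ to $z$ with $\tilde\sigma(t)=\sigma(t)=p$, since $p\in B_R(0)$.
\item Reverse $\tilde\sigma$. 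This shows $p$ lies on a $D$-geodesic from $z$ to $w$ at time $D(z,w)-t<D(z,w)$, so $p\in Z_z$.
\item By the first assertion, the $D$-geodesic from $z$ to $p$ is unique. But the reversals of $\tilde\beta$ and $\tilde\beta'$ are two distinct such geodesics, a contradiction.
\end{enumerate}

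\textbf{Main obstacle.} The delicate point is in the second assertion: checking that the geodesics supplied by Lemma~\ref{lem-infty-to-z} from the non-root starting point $p$ really differ, and that $p$ lands in $Z_z$ rather than coinciding with $z$. To handle this, I would choose $R$ large enough to contain the points witnessing $\beta\neq\beta'$, and take $z$ far outside $B_R(0)$ as in that lemma's proof.
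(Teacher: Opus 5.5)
Your proof is correct. The second assertion uses the same reduction as the paper, but the first assertion takes a different route.

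\textbf{First assertion.} The paper gets this in one line by citing \cite[Lemma 3.9]{lqg-zero-one}. You derive it instead from Proposition~\ref{prop-finite-cross}, using that $D(z,\cdot)\equiv t$ on $\partial\mathcal B_t(z)$, so a geodesic from $z$ meets $\partial\mathcal B_t(z)$ only at time $t$. This is valid and not circular, since Proposition~\ref{prop-finite-cross} is proved from external inputs without the present lemma. It also makes the lemma self-contained modulo the inputs of that proposition.

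Your concatenation step is superfluous. The original geodesic $\sigma$ already runs from $z$ to $w\notin\overline{\mathcal B_s(z)}$ and meets $\partial\mathcal B_t(z)$ only at $p=\sigma(t)$. Hence $p\in\mathcal X_{t,s}$ directly, and uniqueness follows.

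\textbf{Second assertion.} You reduce to the first assertion via Lemma~\ref{lem-infty-to-z}, as the paper does, but organize it as a contradiction. You choose $R$ so that $B_R(0)$ contains a witness $\beta(u)\neq\beta'(u)$. Then the lifted geodesics $\tilde\beta,\tilde\beta'$ to the rational root $z$ are automatically distinct. This avoids the paper's appeal to the confluence in Proposition~\ref{prop-geo-infty} (agreement outside $B_R(0)$). It also avoids the small bookkeeping the paper needs to glue agreement inside and outside $B_R(0)$. You also make explicit the intersection of the first-assertion events over $z\in\mathbb Q^2$, which the paper uses only tacitly.

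\textbf{One cleanup.} The claim in your step 3 that $z\notin B_R(0)$, with its garbled justification, is never used and can be deleted. The fact $p\in Z_z$ already follows from your steps 5--6, or directly from $Z_\infty\cap B_R(0)=Z_z\cap B_R(0)$. The distinctness of $\tilde\beta$ and $\tilde\beta'$ comes straight from the conclusion of Lemma~\ref{lem-infty-to-z} at time $u$.
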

\begin{proof}
By the definition~\eqref{eqn-geo-tree-z}, each $p\in Z_z$ is hit by a $D$-geodesic started from $z$ at a time other than its terminal time. The statement for $Z_z$ is therefore immediate from~\cite[Lemma 3.9]{lqg-zero-one}. 

To get the statement for $Z_\infty$, let $p\in Z_\infty$ and let $\sigma$ and $ \sigma'$ be $D$-geodesics from $p$ to $\infty$. By Proposition~\ref{prop-geo-infty}, there exists $R >0$ such that $\sigma\settminus B_R(0) = \sigma'\settminus B_R(0)$. Let $z \in\BB Q^2$ be as in Lemma~\ref{lem-infty-to-z} for this choice of $R$. Then $p \in Z_z$, so the first part of the present lemma statement tells us that there is a unique $D$-geodesic from $p$ to $z$. The second assertion of Lemma~\ref{lem-infty-to-z} (applied once to each of $\sigma$ and $\sigma'$) then tells us that $\sigma\cap B_R(0) = \sigma'\cap B_R(0)$, so $\sigma=\sigma'$. 
\end{proof}

\begin{figure}[t]
\begin{center}
\includegraphics[width=0.45\textwidth]{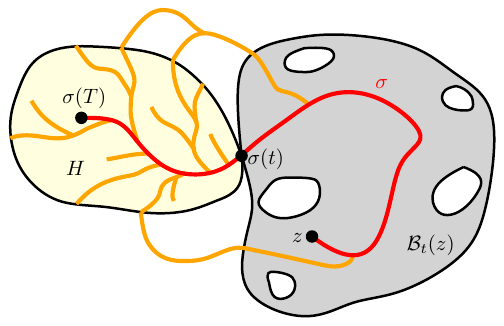}   
\caption{\label{fig-tree-disconnect} 
Illustration of the proof of Lemma~\ref{lem-tree-disconnect}. The geodesic $\sigma$ is shown in red, and other $D$-geodesics are shown in orange. The set $H$ of points which have a $D$-geodesic to $z$ which passes through $\sigma(t)$ is shown in light yellow. We show that each point of $\bdy H\settminus \{\sigma(t)\}$ has at least two distinct $D$-geodesics to $z$, one which passes through $\sigma(t)$ and one which passes through a different point of $\bdy\mcl B_t(z)$. By Lemma~\ref{lem-tree-geo-unique}, this implies that $\bdy H\settminus \{\sigma(t)\}$ is disjoint from $Z_z$. 
}
\end{center}
\end{figure}

\begin{proof}[Proof of Lemma~\ref{lem-tree-disconnect}]
See Figure~\ref{fig-tree-disconnect} for an illustration. 
Let $H = H_{\sigma(t)}$ be the set of points $w \in \BB C$ such that there exists a $D$-geodesic from $z$ to $w$ which passes through $\sigma(t)$ (necessarily at time $t = D(z,\sigma(t))$). Equivalently, 
\eqb \label{eqn-confluence-choice}
H = \left\{ w \in \BB C \,:\, D(w, \sigma(t)) = D(w,z) -t \right\} .
\eqe 
By~\eqref{eqn-confluence-choice} and since $D$ is continuous, the set $H$ is closed. 

We claim that $\bdy H \settminus \{\sigma(t)\}$ is disjoint from $Z_z$. Assuming the claim, we have that $(Z_z \settminus \{\sigma(t)\}) \cap H$ and $(Z_z \settminus \{\sigma(t)\} ) \settminus H$ are disjoint open subsets of $Z_z\settminus \{\sigma(t)\}$, the first set contains $\sigma(t,T)$, and the second set contains $\sigma[0,t)$, so the lemma statement follows. 

It remains to prove the above claim. By Lemma~\ref{lem-tree-geo-unique}, it suffices to show that for each $w \in \bdy H \settminus \{\sigma(t)\}$, there are at least two distinct $D$-geodesics from $z$ to $w$. For this purpose, let $w \in \bdy H \settminus \{\sigma(t)\}$. Since $H$ is closed, $w\in H$. There is a sequence of points $w_n \in \BB C\settminus H$ such that $w_n \to w$. By~\eqref{eqn-confluence-choice}, $D(w,z) - t = D(w,\sigma(t)) > 0$. Since $w_n\to w$, we can find $s \in (0,T)$ such that $D(w_n , z) > s$ for each sufficiently large $n\in\BB N$. 

Let $\mcl X = \mcl X_{t,s} \subset \bdy\mcl B_t(z)$ be the finite set as in Proposition~\ref{prop-finite-cross}, so every $D$-geodesic from $z$ to a point of $\BB C\settminus \mcl B_s(z)$ passes through a point of $\mcl X$. Since $\sigma$ passes through $\sigma(t)$, we have $\sigma(t) \in \mcl X$. For $n\in\BB N$, let $\sigma_{w_n}$ be a $D$-geodesic from $z$ to $w_n$. Since $w_n\notin H$, for each sufficiently large $n$, the point $\sigma_{w_n}(t)$ is a point of $\mcl X$ other than $\sigma(t)$. Since $\mcl X$ is finite, by possibly passing to a subsequence we can arrange so that there exists $x\in\mcl X \settminus \{\sigma(t)\}$ such that $\sigma_{w_n}(t) = x$ for every $n\in\BB N$. 

By taking $\sigma_{w_n}$ to be identically equal to $w_n$ for times larger than $D(z,w_n)$, we can arrange so that the paths $\sigma_{w_n}$ are defined on the same time interval. Since each $\sigma_{w_n}$ is a $D_{\Phi}$-geodesic and $D$ induces the Euclidean topology, the Arz\'ela-Ascoli theorem allows us to extract a subsequence of the paths $\sigma_{w_n}$ which converges uniformly to a $D$-geodesic $\sigma_w$ from $z$ to $w$. We have $\sigma_w(t) = x$. But, since $w\in H$, there is also a $D$-geodesic from $z$ to $w$ which passes through $\sigma(t) \not=x$ at time $t$. Hence, there are two distinct $D$-geodesics from $z$ to $w$, so by Lemma~\ref{lem-tree-geo-unique} we have $w\notin \bdy H$.  
\end{proof}

\begin{lemma}[Unique paths in $Z_\infty$] \label{lem-disconnect-infty}
The following is true a.s. Let $\sigma : [0,\infty)\to\BB C$ be a $D$-geodesic from some point to $\infty$ and let $t > 0$. Then there is no path from any point of $\sigma(0,t)$ to any point of $\sigma(t,\infty)$ in $Z_\infty\settminus \{\sigma(t)\}$. In particular, every point of $Z_\infty$ is a cut point and there is a unique path in $Z_\infty$ between any two points of $Z_\infty$, modulo time change. 
\end{lemma}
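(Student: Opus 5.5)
The plan is to localize to a bounded region, pass from $Z_\infty$ to a tree $Z_z$ rooted at a finite point via Lemma~\ref{lem-infty-to-z}, and then apply Lemma~\ref{lem-tree-disconnect}.

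\textbf{Reduction to $Z_z$.} Suppose, for contradiction, that $\eta$ is a path in $Z_\infty\settminus\{\sigma(t)\}$ from $\sigma(a)$ to $\sigma(b)$, where $0<a<t<b$. Since $\eta\cup\sigma[a,b]$ is compact, choose $R$ with $\eta\cup\sigma[0,b+1]\subset B_R(0)$. Let $z\in\BB Q^2$ be as in Lemma~\ref{lem-infty-to-z} for this $R$. Then $\eta\subset Z_\infty\cap B_R(0)=Z_z\cap B_R(0)$. By the second assertion of that lemma, there is a $D$-geodesic $\wt\sigma$ from $\sigma(0)$ to $z$ that agrees with $\sigma$ on $[0,b+1]$. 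Reversing it gives a $D$-geodesic $\wt\sigma^{r}$ from $z$ to $\sigma(0)$, which traverses $\sigma[0,b+1]$ in the opposite order. The point $\sigma(t)$ is an interior point of $\wt\sigma^r$. In the parametrization from $z$, the arc $\sigma(t,b]$ lies before $\sigma(t)$ and the arc $\sigma[a,t)$ lies after it.

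\textbf{Applying Lemma~\ref{lem-tree-disconnect}.} The technical issue is that Lemma~\ref{lem-tree-disconnect} holds a.s.\ only for a fixed $z$. Since $z$ ranges over the countable set $\BB Q^2$, it holds a.s.\ simultaneously for all such $z$. The lemma applies to $\wt\sigma^r$ restricted to $[0,D(z,\sigma(0))]$, but it requires the parameter to lie in the open interval $(0,T)$. This holds because $\sigma(t)\neq z$ (we have $z\notin B_R(0)$) and $\sigma(t)\neq\sigma(0)$. The lemma then says that $\sigma(t,b]$ and $\sigma[a,t)$ lie in different path components of $Z_z\settminus\{\sigma(t)\}$. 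Since $\eta\subset Z_z\settminus\{\sigma(t)\}$, this is a contradiction.

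\textbf{Consequences.} The cut point statement follows directly. Each $p\in Z_\infty$ equals $\sigma(t)$ for some geodesic $\sigma$ to $\infty$ with $t>0$, so $\sigma(t/2)$ and $\sigma(t+1)$ lie in distinct path components of $Z_\infty\settminus\{p\}$.

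For uniqueness of paths, suppose two distinct simple paths join the same pair of points. They then contain an embedded loop $L\subset Z_\infty$. I would show that some point $p\in L$ is a cut point whose two sides are both met by $L\settminus\{p\}$; the path $L\settminus\{p\}$ joins them, giving a contradiction. To find such a $p$, take any $p\in L$ and the geodesic $\sigma$ to $\infty$ through it, with $p=\sigma(t)$. The direct route is to show that $L$ contains a nondegenerate arc of $\sigma$ around $p$, and then apply the first part to that arc.

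This step is the one I expect to be the main obstacle: a priori $L$ might meet $\sigma$ only at $p$. To get around it, I would parametrize the arc of $L$ near $p$ and use the first part of the lemma at points of $\sigma$ just beyond $p$. Alternatively, I would apply the argument in $Z_z$: by Lemma~\ref{lem-tree-geo-unique}, every point of $Z_z$ lies on a unique geodesic from $z$, so $Z_z$ is a union of geodesic segments forming a tree. Along $L$, consider the point $q$ that is $D$-farthest from $z$ (using compactness). The geodesic from $z$ to $q$ continues through $q$ inside $Z_z$, since $q\in Z_z$ is not a terminal point. Both branches of $L$ at $q$ lie on the side of $q$ containing $z$, because points of $L$ near $q$ are not farther from $z$ and would otherwise lie in $H_q$. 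Lemma~\ref{lem-tree-disconnect} then separates the branch containing $z$ from the other one, and uniqueness of geodesics pins down which side each branch of $L$ is on, yielding the required contradiction.
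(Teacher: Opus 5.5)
Your reduction to $Z_z$ and your proof of the first assertion are correct and follow the paper's argument. Choosing $R$ with $\sigma[0,b+1]\subset B_R(0)$ and explicitly reversing $\wt\sigma$ is, if anything, slightly more careful than the paper. The cut-point consequence is also fine.

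The gap is in the uniqueness of paths. Your ``direct route'' is left open, and the alternative, as written, does not produce a contradiction. You correctly observe that if $q\in L$ maximizes $D(z,\cdot)$ on $L$, then $L\settminus\{q\}$ lies outside $H_q$, i.e.\ on the root side of $q$. But that root side is a \emph{single} path component of $Z_z\settminus\{q\}$: every $w\in Z_z\settminus H_q$ is joined to $z$ by its geodesic, which avoids $q$. So the loop sits inside one component, and Lemma~\ref{lem-tree-disconnect} applied at $q$ separates nothing that $L$ meets. The phrase ``uniqueness of geodesics pins down which side each branch is on'' does not supply the missing contradiction.

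The missing idea is to cut not at $q$ but at a point $x=\sigma_q(t')$ slightly before $q$ on the geodesic $\sigma_q$ from $z$ to $q$. Write $t=D(z,q)$, fix $w\in L\settminus\{q\}$, and take $t'\in(t-D(w,q)/2,\,t)$; this is automatically positive since $D(w,q)\le 2t$.
\begin{itemize}
\item If a geodesic from $z$ to $w$ passed through $x$, then $D(w,x)=D(z,w)-t'\le t-t'$. Hence $D(w,q)\le 2(t-t')<D(w,q)$, which is absurd.
\item So any geodesic from $z$ to $w$ joins $w$ to $z=\sigma_q(0)$ inside $Z_z\settminus\{x\}$. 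Meanwhile $\sigma_q(t',t]$ joins $q$ to $\sigma_q(t',t)$ there.
\item By Lemma~\ref{lem-tree-disconnect}, $q$ and $w$ therefore lie in different path components of $Z_z\settminus\{x\}$.
\item But $L\settminus\{x\}$ is path-connected and contains both, which is a contradiction.
\end{itemize}
You also need to note that $L\subset Z_z$: $L$ is compact, so $L\subset B_R(0)$ for some $R$, and Lemma~\ref{lem-infty-to-z} applies. With this step your loop argument is complete.

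For comparison, the paper avoids loops altogether. For distinct $p,q$ it does the following:
\begin{itemize}
\item It uses Proposition~\ref{prop-geo-infty} and Lemma~\ref{lem-tree-geo-unique} to merge $\sigma_p$ and $\sigma_q$ at minimal times $\tau_p,\tau_q$, and forms the arc $\sigma_{p,q}$.
\item Each $\sigma_p(s)$ with $s\in(0,\tau_p)$ separates $p$ from $\sigma_p(\tau_p)$ by the first assertion, while $\sigma_q[0,\tau_q]$ avoids it; the same holds with $p$ and $q$ swapped.
\item Hence every path from $p$ to $q$ contains $\sigma_{p,q}$, and a simple one equals it.
\end{itemize}
That route works directly in $Z_\infty$ and never has to locate a nondegenerate geodesic arc inside a hypothetical loop.
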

\begin{proof}
We work on the probability-one event that the conclusion of Lemma~\ref{lem-tree-disconnect} holds simultaneously for every $z\in\BB Q^2$, and the conclusion of Lemma~\ref{lem-infty-to-z} holds.  
Let $\eta$ be a path in $Z_\infty$ from a point of $\sigma(0,t)$ to a point of $\sigma(t,\infty)$. Then the image of $\eta$ (still denoted by $\eta$) is compact, so there exists $R > 0$ such that $\eta \subset B_R(0)$. 
By Lemma~\ref{lem-infty-to-z}, there exists $z \in\BB Q^2$ such that $Z_\infty\cap B_R(0) = Z_z\cap B_R(0)$. Moreover, there is a $D$-geodesic $\wt \sigma : [0,T] \to\BB C$ from $\sigma(0)$ to $z$ such that $\wt \sigma(t) = \sigma(t)$ for each $t > 0$ such that $\sigma(t) \in B_R(0)$. Then $\eta$ is a path in $Z_\infty\cap B_R(0) = Z_z\cap B_R(0)$ from $\wt \sigma(0,t)$ to $\wt \sigma(t,T)$. 
By Lemma~\ref{lem-tree-disconnect} for $z$, the path $\eta$ must pass through $\wt \sigma(t) = \sigma(t)$. Hence, there is no path from $\sigma(0,t)$ to $\sigma(t,\infty)$ in $Z_\infty\settminus \{\sigma(t)\}$, as required.

By definition, every point of $Z_\infty$ is of the form $\sigma(t)$ for some $\sigma$ and $t$ as in the lemma statement. Hence the first assertion of the lemma statement implies that every point of $Z_\infty$ is a cut point.  

Finally, we prove the uniqueness of paths. Let $p,q \in Z_\infty$ be distinct and let $\sigma_p$ and $\sigma_q$ be $D$-geodesics from $p$ and $q$ to $\infty$ (which are a.s.\ unique by Lemma~\ref{lem-disconnect-infty}). By Proposition~\ref{prop-geo-infty}, the paths $\sigma_p$ and $\sigma_q$ eventually meet and subsequently merge into each other, i.e., there exists $\tau_p , \tau_q \geq 0$ such that $\sigma_p(\tau_p+t) = \sigma_q(\tau_q+t)$ for every $t\geq0 $. Let $\tau_p$ and $\tau_q$ be the smallest times for which this is the case. We get a simple path $\sigma_{p,q}$ in $Z_\infty$ from $p$ to $q$ by following $\sigma_p|_{[0,\tau_p]}$, then the reverse of $\sigma_q|_{[0,\tau_q]}$. 

We will now argue that $\sigma_{p,q}$ is the unique (modulo time change) simple path in $Z_\infty$ from $p$ to $q$. Let $t\in (0,\tau_p)$. The first assertion of the lemma statement implies that $p$ and $\sigma_p(\tau_p) = \sigma_q(\tau_q)$ are in different path-connected components of $Z_\infty\settminus \{\sigma_p(t)\}$. Since $\sigma_q|_{[0,\tau_q]}$ is a path from $q$ to $\sigma_p(\tau_p)$ which does not visit $\sigma_p(t)$, also $p$ and $q$ are in different path-connected components of $Z_\infty\settminus \{\sigma(t)\}$. Hence every path in $Z_\infty$ from $p$ to $q$ must visit $\sigma(t)$. This holds for every $t\in (0,\tau_p)$, and the image of a path from $p$ to $q$ is closed, so any such path in $Z_\infty$ must visit each point of $\sigma_p[0,\tau_p]$. The same proof also applies with $q$ in place of $p$. Hence any path from $p$ to $q$ in $Z_\infty$ must contain the image of $\sigma_{p,q}$. If this path is simple, this implies that it is in fact a time change of $\sigma_{p,q}$.  
\end{proof}

\begin{lemma}[$Z_\infty$ is a half-zipper] \label{lem-half-zipper}
$Z_\infty$ is a half-zipper in the sense of Definition~\ref{definition:half_zipper}.
\end{lemma}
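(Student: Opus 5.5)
To verify that $Z_\infty$ is a half-zipper we check the two conditions of Definition~\ref{definition:half_zipper} in turn. Both are essentially immediate consequences of results already established.

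\emph{Density.} Let $w \in \BB C$ and $\ep > 0$. By Proposition~\ref{prop-geo-infty} there is a $D$-geodesic $\sigma$ from $w$ to $\infty$. Since $D$ induces the Euclidean topology and $\sigma$ is continuous with $\sigma(0) = w$, we have $\sigma(t) \in B_\ep(w)$ for all sufficiently small $t > 0$. Each such $\sigma(t)$ lies in $Z_\infty$ by~\eqref{eqn-geo-tree-infty}, so $Z_\infty$ is dense in $\BB C$, and hence in $\BB C \cup \{\infty\}$. Path-connectedness follows similarly. For $p, q \in Z_\infty$, Lemma~\ref{lem-tree-geo-unique} gives unique geodesics $\sigma_p, \sigma_q$ to $\infty$, and these merge by Proposition~\ref{prop-geo-infty}. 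Every point of $\sigma_p$ other than its starting point is again a point of $Z_\infty$ (a tail of a geodesic to $\infty$ is itself a geodesic to $\infty$). Concatenating the initial segments of $\sigma_p$ and $\sigma_q$ up to the merge point therefore gives a path in $Z_\infty$ from $p$ to $q$.

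\emph{Unique paths and cut points.} This is exactly the content of Lemma~\ref{lem-disconnect-infty}. Each $p \in Z_\infty$ equals $\sigma(t)$ for some geodesic $\sigma$ to $\infty$ and some $t > 0$. Both $\sigma(0,t)$ and $\sigma(t,\infty)$ are nonempty and contained in $Z_\infty$, and they lie in different path components of $Z_\infty \settminus \{p\}$. Hence $p$ is a cut point, and in the form required by the definition $p$ lies in the interior of the unique path from $q = \sigma(t/2)$ to $r = \sigma(2t)$. The uniqueness of simple paths between two points of $Z_\infty$ is the final assertion of that lemma.

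There is no substantial obstacle here; all of the geometric work has already been done in Lemma~\ref{lem-disconnect-infty}. The only point needing care is that $Z_\infty$ lives in the Riemann sphere and does not contain $\infty$, so density must be checked in $S^2$. This is trivial, since density in $\BB C$ implies density in $\BB C\cup\{\infty\}$. We also work on the almost sure event on which Proposition~\ref{prop-geo-infty} and Lemmas~\ref{lem-tree-geo-unique} and~\ref{lem-disconnect-infty} hold simultaneously.
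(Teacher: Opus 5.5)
Your proposal is correct and follows the paper's proof: density comes straight from the definition of $Z_\infty$ (you spell out the small-$t$ argument via Proposition~\ref{prop-geo-infty}), and the unique-path and cut-point conditions are read off from Lemma~\ref{lem-disconnect-infty}. Your separate path-connectedness check is harmless but redundant, because the existence half of condition (2) is already given by the path $\sigma_{p,q}$ constructed in that lemma.
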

\begin{proof}
It is immediate from the definition~\eqref{eqn-geo-tree-infty} that $Z_\infty$ is dense in $\BB C$. By Lemma~\ref{lem-disconnect-infty}, there is a unique simple path in $Z_\infty$ between any two points of $Z_\infty$ (modulo time change) and every point of $Z_\infty$ is a cut point.
\end{proof}

\subsection{The LQG geodesic tree has short hair} 
\label{sec-lqg-short-hair}

To prove Theorem~\ref{thm-lqg-wheel}, it remains to prove the ``short hair'' property for $Z_\infty$. As in Section~\ref{sec-tree-disconnect}, we start by proving a closely related result for the tree $Z_z$ of~\eqref{eqn-geo-tree-z}.

\begin{lemma}[Confluence in a $D$-ball] \label{lem-finite-merge}
Let $z \in\BB C$. Almost surely, for each $T>0$ and each $\delta>  0$, there exists a finite set $\mcl Y\subset \mcl B_T(z) \cap Z_z$ such that the following is true. Each $D$-geodesic from a point $w\in \mcl B_T(z)$ to $z$ visits some $y\in \mcl Y$ at or before time $\delta$. 
\end{lemma}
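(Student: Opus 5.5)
The plan is to reduce the statement to Proposition~\ref{prop-finite-cross}, applied finitely many times at a mesh of radii about $z$. The key observation is time reversal: the reversal of a $D$-geodesic from $w$ to $z$ is a $D$-geodesic from $z$ to $w$. Consequently, visiting a point $y$ at time $\le \delta$ when run from $w$ is the same as visiting $y$ at time $\ge D(z,w)-\delta$ when run from $z$. So it suffices to find, for each range of values of $D(z,w)$, a finite set of points that every geodesic from $z$ must cross at a radius close to $D(z,w)$.

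Fix $T,\delta>0$, which we may assume are arbitrary since Proposition~\ref{prop-finite-cross} holds a.s.\ simultaneously for all radii $s>t>0$. We split $w\in\mcl B_T(z)$ according to the value of $D(z,w)$.

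First, if $D(z,w)\le\delta$, every geodesic from $w$ to $z$ hits $z$ at time $D(z,w)\le\delta$. We therefore put $z$ into $\mcl Y$; note $z\in Z_z\cap\mcl B_T(z)$ by \eqref{eqn-geo-tree-z}.

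Next, for the remaining $w$, set $h=\delta/2$ and consider integers $k\ge 2$ with $kh< T+h$; there are finitely many. For each such $k$ apply Proposition~\ref{prop-finite-cross} with $t_k=(k-1)h$ and $s_k=(k-\tfrac12)h$, obtaining a finite set $\mcl X_k\subset\bdy\mcl B_{t_k}(z)$. If $D(z,w)\in(kh,(k+1)h]$ then $w\notin\ol{\mcl B_{s_k}(z)}$. Hence any geodesic from $z$ to $w$ passes through some $x\in\mcl X_k$, necessarily at time $t_k$. Read from $w$, it passes through $x$ at time $D(z,w)-t_k\le (k+1)h-(k-1)h=\delta$. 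The intervals $(kh,(k+1)h]$ for $k\ge2$, together with $[0,\delta]=[0,2h]$, cover $[0,T)$.

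Set $\mcl Y=\{z\}\cup\bigcup_k\mcl X_k$, which is finite. It remains to check that $\mcl Y\subset\mcl B_T(z)\cap Z_z$, and I take only those $k$ for which the range $(kh,(k+1)h]$ actually meets $[0,T)$. Each $x\in\mcl X_k$ satisfies $D(z,x)=t_k=(k-1)h<kh<T$, so $x\in\mcl B_T(z)$. Moreover, if $x$ is actually hit by some geodesic from $z$ to a point outside $\ol{\mcl B_{s_k}(z)}$, then it is visited at time $t_k$, which is strictly before the terminal time, so $x\in Z_z$. Points of $\mcl X_k$ never used in this way can simply be discarded without affecting the covering property.

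The main (though mild) obstacle is the bookkeeping at the endpoints. Proposition~\ref{prop-finite-cross} requires $w$ to lie outside the closed ball $\ol{\mcl B_s(z)}$ and requires $s>t$, which is why I use the offset $s_k=(k-\tfrac12)h$ rather than $s_k=kh$. I also need the time budget $D(z,w)-t_k$ to stay $\le\delta$, and this forces the mesh $h=\delta/2$ rather than $\delta$. With these choices, everything else is immediate from Proposition~\ref{prop-finite-cross}.
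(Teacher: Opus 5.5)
Your proof is correct and is essentially the paper's argument: partition the radii with mesh $\delta/2$, apply Proposition~\ref{prop-finite-cross} to consecutive shells, read off the crossing time from the $w$ end, handle $D(z,w)\le\delta$ with $y=z$, and discard crossing points not in $Z_z$. The only difference is cosmetic. Your half-step offset $s_k=(k-\tfrac12)h$ is unnecessary, because $D(z,w)>kh$ already puts $w$ outside $\ol{\mcl B_{kh}(z)}$; the paper simply uses the pair of consecutive partition radii directly.
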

\begin{proof}
We work on the probability-one event of Proposition~\ref{prop-finite-cross}. 
Let $N\in\BB N$ and let $0 = t_0 < t_1 < \dots < t_{n-1} < t_n = T$ be a partition of $[0,T]$ such that 
\eqbn
\max_{n\in [1,N]\cap\BB Z} (t_n - t_{n-1}) \leq \delta/2 . 
\eqen
For each $n = 2,\dots,N$, let $\mcl X_n = \mcl X_{t_{n-1} , t_n}$ be as in Proposition~\ref{prop-finite-cross} with $(t,s) = (t_{n-1} , t_n)$. 
By the definition~\eqref{eqn-geo-tree-z}, every point which is hit by a geodesic started from $z$ at a time other than its terminal time belongs to $Z_z$. Hence the defining property of $\mcl X_n$ from Proposition~\ref{prop-finite-cross} continues to hold if we remove from $\mcl X_n$ all of the points which are not in $Z_z$. We may therefore assume without loss of generality that $\mcl X_n\subset Z_z$. 

Let
\eqbn
\mcl Y := \{z\} \cup \bigcup_{n=2}^N \mcl X_n \subset \mcl B_T(z) \cap Z_z .
\eqen
We check the condition in the lemma statement for this choice of $\mcl Y$. 
Let $w \in \mcl B_T(z)$ and choose $n\in [1,N]\cap\BB Z$ such that $D(z,w) \in (t_{n-1}, t_n]$. If $n \leq 2$, then $D(w,z)  \leq t_2 \leq \delta$ so the desired property holds with $y = z$. Now suppose that $n\geq 3$. Since $w\in \BB C\settminus \ol{\mcl B_{t_{n-1}}(z)}$, the definition of $\mcl X_{n-2}$ implies that each $D$-geodesic $\sigma$ from $w$ to $z$ visits some $x\in \mcl X_{n-2}$. Since $D(x,z) = t_{n-2}$, the time at which $\sigma$ visits $x$ is $D(z,w) - t_{n-2} \leq t_n -t_{n-2} \leq \delta$. 
\end{proof}

We next transfer from a statement for $Z_z$ to a statement for $Z_\infty$ using Lemma~\ref{lem-confluence-pt}. 

\begin{lemma}[Confluence for $D$-geodesics to $\infty$] \label{lem-finite-infty}
Almost surely, for every $R > 0$ and $\delta > 0$, there is a finite set of points $\mcl Y \subset Z_\infty$ such that the following is true. Each $D$-geodesic from a point $w \in\BB C$ to $\infty$ which enters $B_R(0)$ visits some $y\in \mcl Y$ at or before time $\delta$.
\end{lemma}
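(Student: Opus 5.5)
The plan is to reduce to Lemma~\ref{lem-finite-merge} via Lemma~\ref{lem-infty-to-z}. We work on the probability-one event where Lemma~\ref{lem-finite-merge} holds simultaneously for every $z\in\BB Q^2$ (a countable intersection of full-probability events). We also require that Lemmas~\ref{lem-confluence-pt} and~\ref{lem-infty-to-z} hold. Fix $R>0$ and $\delta>0$.

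\textbf{Step 1: choose the comparison point and the radius.} By Lemma~\ref{lem-confluence-pt}, there is $R_1>R$ such that any $D$-geodesic $\sigma$ from $w$ to $\infty$ which enters $B_R(0)$ has $w\in B_{R_1}(0)$. To see this, apply the second assertion of Lemma~\ref{lem-confluence-pt} to a long initial segment of $\sigma$, which is a geodesic from $w$ to a point outside $B_{R_1}(0)$ that enters $B_R(0)$. Next apply Lemma~\ref{lem-infty-to-z} with $R_1$ in place of $R$. This gives $z\in\BB Q^2$ such that every $D$-geodesic $\sigma$ from $w$ to $\infty$ meeting $B_{R_1}(0)$ agrees with some $D$-geodesic $\wt\sigma$ from $w$ to $z$ at all times $t$ with $\sigma(t)\in B_{R_1}(0)$. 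Since $\sigma(0)=w\in B_{R_1}(0)$, there is an initial interval $[0,\tau)$ on which $\sigma$ stays in $B_{R_1}(0)$, and on it $\sigma=\wt\sigma$. Finally choose $T>\sup_{w\in \ol{B_{R_1}(0)}} D(w,z)$, which is finite by continuity of $D$, so that $B_{R_1}(0)\subset\mcl B_T(z)$.

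\textbf{Step 2: apply Lemma~\ref{lem-finite-merge} at a smaller scale.} The difficulty is that the confluence point $y$ from Lemma~\ref{lem-finite-merge} is hit by $\wt\sigma$ within time $\delta'$, but we need it to be hit by $\sigma$ itself. This is guaranteed if $\sigma$ agrees with $\wt\sigma$ up to time $\delta'$. Let $d:=\mathrm{dist}_D(\ol{B_{R}(0)}\cup\{$geodesic starting points$\},\ldots)$. More simply, take
\[
\delta' := \min\Bigl(\delta,\ \inf\{ D(u,v) : u\in \ol{B_{R_1'}(0)},\ v\notin B_{R_1}(0)\}\Bigr),
\]
where $R<R_1'<R_1$ is chosen in Step 1 so that starting points already lie in $B_{R_1'}(0)$. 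To arrange this, apply Lemma~\ref{lem-confluence-pt} to get $R_1'$ and then enlarge to $R_1$. The infimum is positive because $D$ induces the Euclidean topology and $\ol{B_{R_1'}(0)}$ is compact. A $D$-geodesic started in $B_{R_1'}(0)$ and parametrized by $D$-length cannot exit $B_{R_1}(0)$ before time $\delta'$. Hence $\sigma|_{[0,\delta']}=\wt\sigma|_{[0,\delta']}$. Note that $\wt\sigma$ from $w$ to $z$ is the time-reversal of a geodesic from $z$ to $w$, and Lemma~\ref{lem-finite-merge} is phrased for geodesics from $w$ to $z$. Apply that lemma with $(T,\delta')$ to get a finite set $\mcl Y'\subset Z_z\cap\mcl B_T(z)$. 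Then $\wt\sigma$, and hence $\sigma$, visits some $y\in\mcl Y'$ at a time at most $\delta'\le\delta$. In the trivial case $D(w,z)\le\delta'$ the point would be $z$ itself, but this cannot happen since $z\notin B_{R_1}(0)$.

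\textbf{Step 3: ensure $\mcl Y\subset Z_\infty$.} Set $\mcl Y$ to be the set of points of $\mcl Y'$ that are actually visited, at some time in $(0,\delta]$, by some $D$-geodesic to $\infty$ entering $B_R(0)$. Such points lie in $Z_\infty$ by~\eqref{eqn-geo-tree-infty}, except when the visit happens at time $0$. That case is excluded because $y\ne w$ can be arranged: Lemma~\ref{lem-finite-merge} gives a visit time $D(w,z)-t_{n-2}>0$ in its proof. Alternatively, restrict to visits at positive times.

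The main obstacle is the bookkeeping in Step 2. The agreement of $\sigma$ and $\wt\sigma$ only holds while $\sigma$ is inside the ball, so a uniform positive lower bound on the $D$-time needed to exit is what converts ``$\wt\sigma$ hits $\mcl Y'$ by time $\delta'$'' into the same statement for $\sigma$. Compactness of $\ol{B_{R_1'}(0)}$ together with continuity of $D$ provides that bound.
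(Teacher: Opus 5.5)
Your proof is correct and is essentially the paper's argument. You confine starting points via Lemma~\ref{lem-confluence-pt}, compare with a rational point $z$ via Lemma~\ref{lem-infty-to-z} on a larger ball, apply Lemma~\ref{lem-finite-merge} at $z$, and use a uniform $D$-distance buffer so that $\sigma$ and $\wt\sigma$ coincide up to the visit time; you shrink $\delta$ to $\delta'$ where the paper enlarges the ball to radius $R_2$, which is equivalent. In Step~3 you need not rely on the internals of the proof of Lemma~\ref{lem-finite-merge}: that lemma already gives $\mcl Y'\subset Z_z$, so a visit at time $0$ forces $w\in Z_z\cap B_{R_1}(0)=Z_\infty\cap B_{R_1}(0)$, and hence $\mcl Y:=\mcl Y'\cap Z_\infty$ works directly (the paper equivalently intersects with the ball).
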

\begin{proof} 
We start by choosing radii $R_2 > R_1 > R$, a point $z\in\BB Q^2$, and a radius $T>0$. 
By the second assertion of Lemma~\ref{lem-confluence-pt}, there exists $R_1  > R$ such that every $D$-geodesic from a point of $\BB C$ to $\infty$ which enters $B_R(0)$ must start at a point of $B_{R_1}(0)$. 
By~\cite[Lemma 3.8]{lqg-metric-estimates}, there exists $R_2 > R_1$ such that $D(w,z) > \delta$ for every $w \in B_{R_2}(0)$ and $z \in \BB C\settminus B_{R_1}(0)$. 
Let $z\in\BB Q^2$ be chosen so that the conclusion of Lemma~\ref{lem-infty-to-z} holds with $R_2$ in place of $R$, so that in particular $Z_\infty \cap B_{R_2}(0) = Z_z\cap B_{R_2}(0)$. 
Let $T  > 0$ be chosen so that $B_{R_2}(0) \subset \mcl B_T(z)$ (such a $T$ exists since $D$ induces the Euclidean topology on $\BB C$). 
 
Almost surely, the conclusion of Lemma~\ref{lem-finite-merge} holds simultaneously for every $z\in\BB Q^2$. Let $\wt{\mcl Y} \subset\mcl B_T(z) \cap Z_z$ be the finite set of points as in Lemma~\ref{lem-finite-merge} for $z$ and $T$ as in the first paragraph. Let $\mcl Y := \wt{\mcl Y} \cap B_{R_2}(0)$. 
Then $\mcl Y\subset Z_z\cap B_{R_2}(0) \subset Z_\infty$, where the inclusion is by our choice of $z$. 
We will check the condition in the lemma statement for this choice of $\mcl Y$. 
 
Let $w \in \BB C$ and let $\sigma $ be a $D$-geodesic from $w$ to $\infty$. 
Assume that $\sigma $ enters $B_R(0)$. 
Then our choice of $R_1$ gives $w \in B_{R_1}(0)$. 
By the second assertion of Lemma~\ref{lem-infty-to-z} and our choice of $z$, there is a $D$-geodesic $\wt\sigma$ from $w$ to $z$ such that $\sigma(t) = \wt\sigma(t)$ for each $t\geq 0$ such that $\sigma(t) \in B_{R_2}(0)$. 
By the defining property of $\wt{\mcl Y}$ from Lemma~\ref{lem-finite-merge}, the $D$-geodesic $\wt\sigma$ visits some $y\in \wt{\mcl Y}$ before time $\delta$. Since $w \in B_{R_1}(0)$ and by our choice of $R_2$, $\wt\sigma$ cannot exit $B_{R_2}(0)$ before time $\delta$. Therefore, $y\in B_{R_2}(0)$ so $y\in \mcl Y$. Furthermore, $\sigma$ and $\wt\sigma$ agree up until $\wt\sigma$ hits $y$, so $\sigma$ also hits $y$ before time $\delta$.  
\end{proof}

\begin{figure}[t]
\begin{center}
\includegraphics[width=0.45\textwidth]{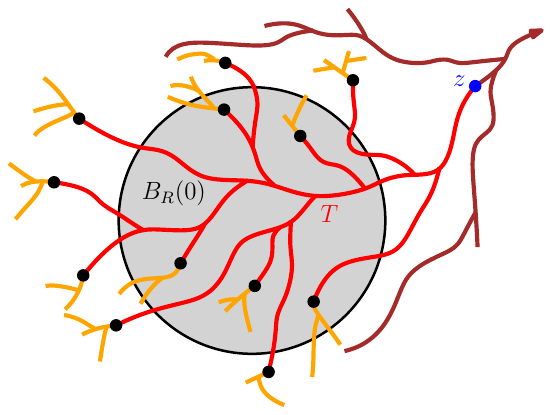}   
\caption{\label{fig-short-hair} 
Illustration of the proof of Lemma~\ref{lem-short-hair}. The finite set $\mcl Y \subset\BB C$ of Lemma~\ref{lem-finite-infty} is shown in black. The finite subtree $T$ is shown in red. Some representative points of $H_y$ for $y\in\mcl Y$ are shown in orange. Some representative points of $H_\infty$ (which admit $D$-geodesics to $\infty$ which do not visit any $y\in\mcl Y$) are shown in brown. 
}
\end{center}
\end{figure}

\begin{lemma}[$Z_\infty$ has short hair] \label{lem-short-hair}
Almost surely, $Z_\infty$ has short hair with respect to the spherical metric on $\BB C\cup\{\infty\}$ in the sense of Definition~\ref{definition:half_zipper}.
\end{lemma}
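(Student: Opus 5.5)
Fix $\epsilon>0$; we construct the finite tree $T$ using Lemma~\ref{lem-finite-infty}. Choose $R>0$ so large that $\BB C\settminus B_R(0)$, together with $\infty$, has spherical diameter less than $\epsilon/4$. The aim is that every component of $Z_\infty - T$ either lies in a small neighbourhood of some point of a finite set, or lies outside $B_R(0)$.

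First choose $\delta>0$ using uniform continuity. Since $D$ induces the Euclidean topology and $\ol{B_{R_1}(0)}$ is compact (with $R_1$ as in Lemma~\ref{lem-confluence-pt}, so that geodesics to $\infty$ entering $B_R(0)$ start in $B_{R_1}(0)$), there is $\delta>0$ such that any $D$-geodesic segment of $D$-length at most $2\delta$ started in $B_{R_1}(0)$ has spherical diameter less than $\epsilon/4$. Let $\mcl Y$ be the finite set from Lemma~\ref{lem-finite-infty} for $R$ and $\delta$. Define $T$ as the union of the geodesics to $\infty$ from the points $y\in\mcl Y$, truncated at the first exit from $B_{R'}(0)$ for a suitable larger radius $R'$. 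This set is a finite union of compact arcs in $Z_\infty$. Together with arcs connecting them inside $Z_\infty$ (using the unique-path property of Lemma~\ref{lem-disconnect-infty}), it forms a finite tree. Alternatively, take $T$ to be the convex hull in $Z_\infty$ of $\mcl Y$ and a point $x$ at which all geodesics from $\mcl Y$ merge, which exists by Proposition~\ref{prop-geo-infty}.

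Next, consider a component $C$ of $Z_\infty - T$. If $C$ meets $B_R(0)$, pick $p\in C\cap B_R(0)$. The geodesic from $p$ to $\infty$ enters $B_R(0)$, so by Lemma~\ref{lem-finite-infty} it hits some $y\in\mcl Y\subset T$ within time $\delta$. Hence the portion of that geodesic before it first hits $T$ has $D$-length at most $\delta$. The tree structure then bounds $C$: every point $q\in C$ has its geodesic to $\infty$ reaching $T$ at the same attachment point $a$, since otherwise the unique path between $p$ and $q$ would pass through $T$. If $q$ also lies in $B_R(0)$, its hair to $a$ has $D$-length at most $\delta$. Therefore all points of $C\cap B_R(0)$ are within the spherical $\epsilon/4$-neighbourhood of $a$.

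The main obstacle is points of $C$ outside $B_R(0)$: a component attached near $B_R(0)$ might send a long hair out and back. To rule this out, enlarge $R$ in the construction. Apply the argument with radius $R$ and test points in $B_{R}(0)$, and choose $T$ to contain the geodesics from $\mcl Y$ up to exiting a much larger ball $B_{R''}(0)$. Then a component that intersects both $B_{R}(0)$ and $\BB C\settminus B_{R''}(0)$ would contain a point whose geodesic enters $B_R(0)$ and starts outside $B_{R_1}(0)$, contradicting Lemma~\ref{lem-confluence-pt}. Components contained in $\BB C\settminus B_R(0)$ have diameter less than $\epsilon/4$ automatically. The delicate part is the remaining annulus $B_{R''}(0)\settminus B_R(0)$, which has to be handled by iterating Lemma~\ref{lem-finite-infty} with radius $R''$ in place of $R$. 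This makes the small-hair bound apply on the whole region where the trapping argument is needed, after which every component has spherical diameter less than $\epsilon$.
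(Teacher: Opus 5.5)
Your Steps~1--4 follow the paper's argument. You make essentially the same choices of $R$ and $\delta$, take the finite set $\mcl Y$ from Lemma~\ref{lem-finite-infty}, and let $T$ be the hull of $\mcl Y$ together with a common merge point. You also observe that, for a component $C$ of $Z_\infty\settminus T$ meeting $B_R(0)$, the geodesics to $\infty$ from points of $C$ all first meet $T$ at one attachment point $a$, with $D(q,a)\le\delta$ for every $q\in C\cap B_R(0)$. Choosing $\delta$ via geodesic segments started in $B_{R_1}(0)$ is, if anything, slightly more careful than the paper, since $a$ need not lie in $B_R(0)$.

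The gap is your final paragraph. The ``main obstacle'' is not there, and the fix you sketch is unnecessary and not sound as written. The contradiction with Lemma~\ref{lem-confluence-pt} needs a point of $C$ whose geodesic actually enters $B_R(0)$. But a far-away $q\in C$ merges with the geodesic from $p$ only at a point $m$ with $D(p,m)\le\delta$, and $m$ need not lie in $B_R(0)$. The iteration of Lemma~\ref{lem-finite-infty} at radius $R''$ is only announced, never carried out, and it would also force you to re-choose $\delta$ for the larger ball. So as it stands the proof ends on an unexecuted step.

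What you have overlooked is that Step~4 already controls \emph{every} point of $C\cap B_R(0)$, regardless of how it is joined to $a$ inside $C$. A hair that leaves $B_R(0)$ and comes back lands at some $q\in B_R(0)$. The geodesic from $q$ to $\infty$ hits $\mcl Y$ within time $\delta$ and first meets $T$ at $a$, so $q$ is still within spherical distance $\epsilon/4$ of $a$.

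Everything else in $C$, including anything in the annulus $B_{R''}(0)\settminus B_R(0)$, lies in $\mathbb C\settminus B_R(0)$. That set already has spherical diameter $<\epsilon/4$ by your choice of $R$, so no confluence input is needed there.

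To combine the two pieces, use connectedness. If $C$ meets both $B_R(0)$ and its complement, a path in $C$ between them has a first exit point $c\notin B_R(0)$ that is a limit of points of $C\cap B_R(0)$. Hence $c$ is within $\epsilon/4$ of $a$, and every point of $C$ is within $\epsilon/2$ of $a$. This is precisely the triangle-inequality step with which the paper finishes. Replacing your last paragraph by it completes the proof.
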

\begin{proof}
See Figure~\ref{fig-short-hair} for an illustration. 
Fix $\ep > 0$. We need to construct a compact set $T\subset Z_\infty$ with the topology of a finite tree such that every path-connected component of $Z_\infty\setminus T$ has spherical diameter at most $\ep$. 

Let $R > 0$ be large enough so that the spherical diameter of $\BB C\settminus B_R(0)$ is at most $\ep/2$.
Since $D$ induces the Euclidean topology, there exists $\delta >0$ such that for each $z,w\in B_R(0)$ with $D(z,w) \leq \delta$, the spherical distance from $z$ to $w$ is at most $\ep/2$. 

Let $\mcl Y\subset Z_\infty$ be the finite set of points as in Lemma~\ref{lem-finite-infty} with the above choice of $R$ and $\delta$. For $y\in\mcl Y$, let $\sigma_y$ be the $D$-geodesic from $y$ to $\infty$ (which is unique by Lemma~\ref{lem-tree-geo-unique}). Let $z \in \BB C \settminus B_R(0)$ be a point which lies on $\sigma_y$ for every $y\in \mcl Y$ (such a point exists by Proposition~\ref{prop-geo-infty}). Let $T$ be the union of the segment of $\sigma_y$ from $y$ to $z$, over all $y\in\mcl Y$. Then $T$ is a compact subset $Z_\infty$ homeomorphic to a finite tree. 

It remains to check the diameter condition for the path-connected components of $Z_\infty\settminus T$. For $y \in \mcl Y$, let $H_y$ be the set of $p\in Z_\infty \settminus T$ such that the $D$-geodesic from $p$ to $\infty$ passes through $p$ (this $D$-geodesic is unique by Lemma~\ref{lem-tree-geo-unique}). Also let $H_\infty$ be the set of $p\in Z_\infty\settminus T$ such that the $D$-geodesic from $p$ to $\infty$ does not pass through any $y\in\mcl Y$. 

If $p \in H_y$ and $q\in H_{y'}$ for some distinct $y,y' \in \mcl Y $, then there is a simple path in $Z_\infty$ from $p$ to $q$ which intersects $T$: this path follows a geodesic segment from $p$ to $y$, then the unique simple path in $T$ from $y$ to $y'$, then a geodesic segment from $y'$ to $q$. A similar argument shows that the same is true if one of $y$ or $y'$ is equal to $\infty$.
%replace geodesic segment with segments of geodesics from $p$ to $\infty$ and from $z$ to $\infty$. 
By Lemma~\ref{lem-disconnect-infty}, this path is in fact the unique simple path $Z_\infty$ between $p$ and $q$, so there is no path in $Z_\infty$ between $p$ and $q$ which does not intersect $T$. 
Hence every path-connected component of $Z_\infty\settminus T$ is contained in $H_y$ for some $y\in \mcl Y \cup \{\infty\}$. 

By the defining property of $\mcl Y$ from Lemma~\ref{lem-finite-infty}, for $y\in\mcl Y$ each point of $H_y \cap B_R(0)$ lies at $D$-distance at most $\delta$ from $y$. Hence the $D$-diameter of $H_y\cap B_R(0)$ is at most $\delta$, so (by our choice of $\delta$) the spherical diameter of $H_y\cap B_R(0)$ is at most $\ep/2$. By our choice of $R$, the spherical diameter of $\BB C\settminus B_R(0)$ is at most $\ep/2$, so by the triangle inequality the spherical diameter of $H_y$ is at most $\ep$ (note that $H_y\cup \{y\}$ is connected by definition). Again using the property of $\mcl Y$ from Lemma~\ref{lem-finite-infty}, we have $H_\infty\subset \BB C\settminus B_R(0)$. Hence our choice of $R$ implies that the spherical diameter of $H_\infty$ is at most $\ep$. Therefore every path-connected component of $Z_\infty\settminus T$ has spherical diameter at most $\ep$, as required.
\end{proof}

\begin{proof}[Proof of Theorem~\ref{thm-lqg-wheel}]
By Lemmas~\ref{lem-half-zipper} and~\ref{lem-short-hair}, $Z_\infty$ is a half-zipper with short hair (Definition~\ref{definition:half_zipper}). Hence we can apply Theorem~\ref{theorem:half_zipper} (with $Z_\infty$ viewed as a subset of the Riemann sphere $\BB C\cup\{\infty\}$) to get the desired CaTherine wheel $f : S^1\to \BB C\cup\{\infty\}$. 
\end{proof}

\subsection{Properties of the LQG CaTherine wheel}
\label{sec-lqg-order}

In this subsection we will prove Theorem~\ref{thm-order}. 
We first need the following lemma, which is a straightforward consequence of Lemma~\ref{lemma:unique_preimage}. 

\begin{lemma}[Unique preimages for LQG] \label{lem-lqg-preimage}
Let $f : S^1\to\BB C\cup\{\infty\}$ be as in Theorem~\ref{thm-lqg-wheel}. 
For each $w\in\BB C \setminus Z_\infty$ for which there is a unique $D$-geodesic from $w$ to $\infty$, the set $f^{-1}(w)$ consists of a single point. 
Moreover, $f^{-1}(\infty)$ is a single point. 
\end{lemma}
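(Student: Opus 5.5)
The plan is to apply Lemma~\ref{lemma:unique_preimage} with $Z^+ = Z_\infty$. Both $w$ and $\infty$ lie in $S^2 - Z_\infty$. Note that $\infty\notin Z_\infty$ because $Z_\infty\subset\BB C$. So it suffices to show that in each case there is exactly one equivalence class of proper rays in $Z_\infty$ landing at the given point. Existence of such a ray is automatic by the strong landing property (Lemma~\ref{lemma:limit_rays}), which applies since $Z_\infty$ is a half-zipper with short hair (Lemmas~\ref{lem-half-zipper} and~\ref{lem-short-hair}). The work is therefore uniqueness.

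For the point $\infty$, I will show that every proper ray in $Z_\infty$ landing at $\infty$ is eventually a $D$-geodesic to $\infty$, and that any two such tails eventually agree. Let $r$ be a proper ray landing at $\infty$ and pick $p\in r$, with $\sigma_p$ the (unique, by Lemma~\ref{lem-tree-geo-unique}) $D$-geodesic from $p$ to $\infty$. For a point $q$ on $r$ far out, the unique path in $Z_\infty$ from $p$ to $q$ is $\sigma_{p,q}$, built in the proof of Lemma~\ref{lem-disconnect-infty} from $\sigma_p|_{[0,\tau_p]}$ followed by the reverse of $\sigma_q|_{[0,\tau_q]}$. Since $q\to\infty$ along $r$, I expect the merge point $\sigma_p(\tau_p)$ to go to infinity as well, so that $r$ contains ever longer initial segments of $\sigma_p$, giving $r\supset\sigma_p$ and hence $r=\sigma_p$ modulo initial segments. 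The justification is as follows. The segments $\sigma_{p,q}$ are initial subpaths of $r$. If the merge points stayed bounded, say in $B_\rho(0)$, then $r$ would contain a path from the fixed point $\sigma_p(\tau)$ that leaves toward infinity via the reverse of $\sigma_q$. Since $\sigma_p$ and $\sigma_q$ merge at a common point, the portion of $r$ after that point would have to coincide with pieces of distinct geodesic tails $\sigma_q$ for various $q$; these all start from the same merge point but end at different $q$. Any two initial subpaths of $r$ are nested, and I will derive a contradiction with the confluence of Proposition~\ref{prop-geo-infty}, which forces all $\sigma_q$ to agree outside a large ball $B_R(0)$. Once every landing ray is a tail of some $\sigma_p$, Proposition~\ref{prop-geo-infty} shows that any two of them agree outside a compact set, so they are equivalent.

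For $w\in\BB C\setminus Z_\infty$ with a unique geodesic $\sigma_w$ to $\infty$, the candidate ray is $\sigma_w(0,1]$, reversed. It lies in $Z_\infty$, lands at $w$, and $w\notin Z_\infty$. Let $r$ be any proper ray landing at $w$, and take points $p_i\in r$ with $p_i\to w$. The geodesics $\sigma_{p_i}$ converge along a subsequence, by Arzel\`a–Ascoli, to a geodesic from $w$ to $\infty$, which must be $\sigma_w$ by uniqueness. Using the confluence property (each $\sigma_{p_i}$ passes through a point of a finite set, as in Lemma~\ref{lem-finite-infty}, localized near $w$), I will conclude that for each $t>0$ the geodesic $\sigma_{p_i}$ eventually passes through $\sigma_w(t)$. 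Hence the path in $Z_\infty$ from $p_1$ to $p_i$, which is an initial part of $r$, eventually contains $\sigma_w(t)$ for every $t>0$, so $r$ contains $\sigma_w(0,t]$. Therefore $r$ and the reversed $\sigma_w$ agree near $w$ and are equivalent.

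The main obstacle is this last convergence step: upgrading uniform convergence $\sigma_{p_i}\to\sigma_w$ to the statement that $\sigma_{p_i}$ actually merges with $\sigma_w$ before any fixed time $t$. I would handle it by applying Proposition~\ref{prop-finite-cross} to the tree $Z_z$ for a $z$ given by Lemma~\ref{lem-infty-to-z}, with $z$ chosen near $w$ along $\sigma_w$. The geodesics to $z$ from all points near $w$ pass through a finite set, and the subsequential limit identifies the passage point as $\sigma_w(t)$.
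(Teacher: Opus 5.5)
Your reduction to Lemma~\ref{lemma:unique_preimage} matches the paper's. However, the step that carries the $\infty$ case fails as you describe it. You correctly see that if $r$ ever leaves $\sigma_p$, the merge point stabilizes at some $m=\sigma_p(\tau^*)$, and the part of $r$ beyond $m$ is a nested union of reversed initial segments $\sigma_q[0,\tau_q]$. But Proposition~\ref{prop-geo-infty} cannot supply the contradiction. It only constrains geodesics started in a fixed bounded set, while your $q$ escape to $\infty$. Moreover, the claim that it ``forces all $\sigma_q$ to agree outside $B_R(0)$'' is false even in the legitimate case $r=\sigma_p$: there the $\sigma_q$ are distinct tails of $\sigma_p$ starting outside $B_R(0)$. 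So the argument as written proves too much. The missing idea is uniqueness at $m$. Each piece of $r$ from $m$ to $q$ is a $D$-geodesic of length $D(m,q)\to\infty$, and these pieces are nested. Hence $r$ beyond $m$ is itself a $D$-geodesic from $m$ to $\infty$ that leaves $m$ off $\sigma_p$. Since $m\in Z_\infty$, this contradicts Lemma~\ref{lem-tree-geo-unique}. (Alternatively, by Lemma~\ref{lem-confluence-pt} this path and $\sigma_m$ both pass through a pinch point $x\neq m$, giving two distinct paths in $Z_\infty$ from $m$ to $x$.) With this repair, every ray in $Z_\infty$ from $p$ landing at $\infty$ equals $\sigma_p$, and Proposition~\ref{prop-geo-infty} then finishes the $\infty$ case.

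In the $w$ case, the step you flag as the main obstacle is only sketched, and as stated it has a quantifier problem. Proposition~\ref{prop-finite-cross} holds a.s.\ for each fixed $z$, so you cannot take $z$ on $\sigma_w$ for arbitrary $w$. You would have to use the rational $z$ of Lemma~\ref{lem-infty-to-z}, which lies far from $w$, and take $t<s<D(z,w)$ with $D(z,w)-t$ small. Before the finiteness of $\mathcal{X}_{t,s}$ can identify the passage point, you must also check that every geodesic from $w$ to $z$ agrees with $\sigma_w$ up to the pinch point of Lemma~\ref{lem-confluence-pt}, using uniqueness of $\sigma_w$. A further correction: the path from $p_1$ to $p_i$ does not contain $\sigma_w(t)$ for every $t>0$, only for $t$ below the time at which $\sigma_{p_1}$ joins $\sigma_w$. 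What is true is that the joining times $s_i$ tend to $0$ and the path from $p_i$ to $p_j$ contains $\sigma_w[s_j,s_i]$.

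None of this is needed, because the repaired $\infty$ argument already gives the fact the paper uses. Concatenate a ray landing at $w$ with the geodesic from its other endpoint to $\infty$, and loop-erase. Every tail $[t,\infty)$ with $t>0$ is then the geodesic from its initial point. Letting $t\to 0$ gives a $D$-geodesic from $w$ to $\infty$, which must be $\sigma_w$. The paper obtains this tail property differently: iterating Lemma~\ref{lem-confluence-pt} gives pinch points $x_n\to\infty$ through which every unbounded path in $Z_\infty$ passes. Together with uniqueness of paths between the $x_n$, this also settles the $\infty$ case in one line, with no limiting argument near $w$.
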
 
\begin{proof}
Recall Definition~\ref{definition:landing} of a ray in $Z_\infty$ and its landing point. 
By Lemma \ref{lemma:unique_preimage}, a point in $(\BB C\cup \{\infty\}) \setminus Z_\infty$ has a unique preimage under $f$ if and only if it is the landing point of a unique equivalence class of rays in $Z_\infty$ (where rays are declared to be equivalent if they agree outside of a compact subset of $Z_\infty$). We will check that this holds for $\infty$ and for each $w\in \BB C\setminus Z_\infty$ for which there is a unique $D$-geodesic from $w$ to $\infty$. 

By iteratively applying Lemma~\ref{lem-confluence-pt}, we obtain a sequence of points $Z_\infty \ni x_n \to \infty$ such that each $D$-geodesic from a point of $\BB C$ to $\infty$ passes through infinitely many of the $x_n$s. By the definition~\eqref{eqn-geo-tree-infty} of $Z_\infty$, this implies that every unbounded path in $Z_\infty$ passes through infinitely many of the $x_n$s. 

If $\eta ,\wt\eta : [0,\infty) \to Z_\infty$ are simple paths in $Z_\infty$ going to $\infty$, then each of $\eta$ and $\wt\eta$ visit infinitely many of the $x_n$s. By Lemma~\ref{lem-disconnect-infty}, for each $n,m\in\BB N$ with $n <m$, there is a unique simple path in $Z_\infty$ from $x_n$ to $x_m$, modulo time change. Hence the images of $\eta$ and $\wt\eta$ coincide outside of a compact set. Therefore $\eta$ and $\wt\eta$ belong to the same equivalence class of rays, so $ \infty$ is the landing point of a unique equivalence class of rays. 

Now let $w\in \BB C \setminus Z_\infty$ and assume that there is a unique $D$-geodesic $\sigma_w$ from $w$ to $\infty$. Then $\sigma_w|_{(0,1]}$ is a ray in $Z_\infty$ which lands at $w$ (at time 0). We claim that any other ray in $Z_\infty$ is equivalent to this one. Indeed, consider an arbitrary ray whose landing point is $w$, represented by a simple path $\eta : (0,1] \to Z_\infty$ which extends continuously to $\eta : [0,1]\to \BB C$ with $\eta(0) = z$. Since $w \notin Z_\infty$ the $D$-geodesic from $\eta(1)$ to $\infty$ does not visit $w$, so since the range of this $D$-geodesic is closed it is disjoint from $\eta[t,\infty)$ for some $t > 0$.  
By concatenating $\eta$ with the $D$-geodesic from $\eta(1)$ to $\infty$, then erasing any loops which the concatenated path makes, we therefore obtain a function $\wt\eta: [0,\infty) \to \BB C$ with $\wt\eta(0,\infty)\subset Z_\infty$ and $\wt\eta(t) \to \infty$ as $t\to\infty$ which coincides with $\eta$ for small enough times. Hence the rays $\wt\eta|_{(0,T]}$ and $\eta$ are equivalent for each $T>0$. 

The path $\wt\eta$ visits $x_n$ for each sufficiently large $n$. By Lemma~\ref{lem-disconnect-infty}, for each $t > 0$ and each sufficiently large $n$, the segment of $\wt\eta$ from $\wt\eta(t)$ until the first time it visits $x_n$ coincides (modulo time change) with the unique simple path in $Z_\infty$ from $\wt\eta(t)$ to $x_n$. Since this holds for every $n$, we get that $\wt\eta|_{[t,\infty)}$ agrees (modulo time change) with the $D$-geodesic from $\wt\eta(t)$ to $\infty$. Sending $t\to 0$ shows that $\wt\eta$ coincides (modulo time change) with a $D$-geodesic from $w$ to $\infty$. Hence $\wt\eta$ and $\sigma_w$ agree modulo time change. In particular, for any $T>0$, $\wt\eta|_{(0,T]}$ and $\sigma_w|_{(0,1]}$ are equivalent rays. 
\end{proof}

\begin{proof}[Proof of Theorem~\ref{thm-order}]
By Lemma~\ref{lem-lqg-preimage}, $\infty$ has a unique preimage under $f$. 
The LQG area measure $\mu$ assigns positive mass to every open set and zero mass to every point (see, e.g.,~\cite[Theorem 2.1]{bp-lqg-notes}). Identify $S^1\settminus f^{-1}(\infty)$ with the open interval $(-1,1)$, so that $f : (-1,1) \to\BB C$. By re-parametrizing, we can assume that $f(0) = 0$. Let $\phi : (-1,1) \to \BB R$ be defined so that $\phi(0) = 0$, $\phi(t) = \mu(f[0,t])$ for $t \in (0,1)$, and $\phi(t) = -\mu(f[ t,0])$ for $t\in (-1,0)$. By the definition of a CaTherine wheel, the sets $f[a,b]$ for $-1<a<b<1$ have non-empty interior, hence positive $\mu$-mass. Hence $\phi$ is strictly increasing. Since $f$ is continuous, for any $t \in (-1,1)$, the intersection $\bigcap_{\ep > 0} f[t-\ep,t+\ep]$ is a single point. Since $\mu$ assigns zero mass to each point, we have 
\eqbn
\lim_{\ep\to 0} ( \phi(t+\ep) - \phi(t-\ep)) = \lim_{\ep\to 0}  \mu(f[t-\ep,t+\ep]) =0. 
\eqen
Hence $\phi$ is continuous, so $\phi$ is a homeomorphism. Thus, $g := f\circ\phi$ is continuous. 

Lemma~\ref{lem-lqg-preimage} implies that each point of $\BB C$ which has a unique $D$-geodesic to $\infty$ has a unique preimage under $f$, and hence also under $g$. The statement about the order in which $z$ and $w$ are hit by $g$ follows directly from Lemma~\ref{lemma:circular_order} applied with $(p_0,p_1,p_2) = (\infty, z, w)$. 
%Each $p\in Z_\infty$ corresponds to at least two ideal gaps in the setting of Section~\ref{sec-universal-circle}, one to the left side of a geodesic which visits $p$ and one to the right side of this same geodesic. Hence the definition of $f$ implies that each such point has at least two preimages under $f$ (equivalently, under $g$). 
\end{proof}

\bibliography{cibib,danny_refs}
\bibliographystyle{hmralphaabbrv}

\end{document}